\DeclareMathOperator{\Res}{\mathrm{Res}}
\DeclareMathOperator{\Ord}{\mathcal{O}}
\let\Im\undefined
\DeclareMathOperator{\Im}{\mathrm {Im}}
\def\R{{\mathbb R}}
\def\C{{\mathbb C}}
\newcommand\IC{{\mathbb C}}
\newcommand\IZ{{\mathbb Z}}
\newcommand\IN{{\mathbb N}}
\newcommand{\nontang}[1]{  \sphericalangle\text{-}\lim_{#1}}
\newcommand{\mgf}{M}
\newcommand{\mgfpsi}{\psi}
\newcommand{\abcgf}[2]{\eta_{#2}^{#1}}
\def\<{\langle}
\def\>{\rangle}
\def\E{{\mathbb E}}
\def\i{\underline i}
\DeclareMathOperator{\NC}{\mathit{NC}} \DeclareMathOperator{\NCirr}{\NC^\mathit{irr}} 
\DeclareMathOperator{\IP}{\mathit{Int}} \newcommand{\alg}[1]{\mathcal{#1}}
\newcommand{\bigabs}[1]{\bigl\lvert#1\bigr\rvert}
\newcommand{\biggabs}[1]{\biggl\lvert#1\biggr\rvert}
\newcommand{\harpleftsign}{\scriptstyle\leftharpoonup}
\newcommand{\harpleft}[2]{  \ifx\displaystyle#1\doalign{$\harpleftsign$}{#1#2}\fi
  \ifx\textstyle#1\doalign{$\harpleftsign$}{#1#2}\fi
  \ifx\scriptstyle#1\doalign{\scalebox{.6}[.9]{$\harpleftsign$}}{#1#2}\fi
  \ifx\scriptscriptstyle#1\doalign{\scalebox{.5}[.8]{$\harpleftsign$}}{#1#2}\fi
}
\newcommand{\harprightsign}{\scriptstyle\rightharpoonup}
\newcommand{\harpright}[2]{  \ifx\displaystyle#1\doalign{$\harprightsign$}{#1#2}\fi
  \ifx\textstyle#1\doalign{$\harprightsign$}{#1#2}\fi
  \ifx\scriptstyle#1\doalign{\scalebox{.6}[.9]{$\harprightsign$}}{#1#2}\fi
  \ifx\scriptscriptstyle#1\doalign{\scalebox{.5}[.8]{$\harprightsign$}}{#1#2}\fi
}
\newcommand{\doalign}[2]{ {\vbox{\offinterlineskip\ialign{\hfil##\hfil\cr#1\cr$#2$\cr}}}}
\newcommand{\be}{\begin{equation}}
\newcommand{\ee}{\end{equation}}
\newtheorem{theorem}{Theorem}[section]
\newtheorem{proposition}[theorem]{Proposition}
\newtheorem{corollary}[theorem]{Corollary}
\newtheorem{lemma}[theorem]{Lemma}
      \newtheorem{definition}[theorem]{Definition}
\theoremstyle{remark}
\newtheorem{remark}[theorem]{Remark}
\newtheorem{example}[theorem]{Example}
\title{Boolean Cumulants and Subordination in Free Probability}
\author[F. Lehner]{Franz Lehner}
\address[F. Lehner]{Institute of Discrete Mathematics\\
Graz University of Technology\\
Steyrergasse 30\\
A-8010 Graz, Austria}
\email{lehner@math.tu-graz.ac.at}
\author[K. Szpojankowski]{Kamil Szpojankowski}
\address[K. Szpojankowski]{Wydzia\l{} Matematyki i Nauk Informacyjnych\\
Politechnika Warszawska\\
ul. Koszykowa 75\\
00-662 Warszawa, Poland}
\email{k.szpojankowski@mini.pw.edu.pl}
\thanks{Supported by the Austrian Federal Ministry of Education, Science and
  Research and the Polish Ministry of Science and Higher Education, grants
  N$^{\textrm{os}}$ PL 08/2016 and PL 06/2018\\
KSz: research partially supported by NCN grant 2016/23/D/ST1/01077}
\numberwithin{equation}{section}
\begin{document}
\date{%Rev.\SVNRevision, 
29.04.2020}

\begin{abstract}
  Subordination is the basis of the analytic approach to free additive and multiplicative convolution. 
  We extend this approach to a more general setting and prove that the conditional expectation $\E_\varphi\left[ (z-X-f(X)Yf^*(X))^{-1}| X\right]$ for
  free random variables $X,Y$ and a Borel function $f$  
  is a resolvent again. 
  This result allows the explicit calculation of the distribution of
  noncommutative polynomials of the form $X+f(X)Yf^*(X)$. 
  The main tool is a new combinatorial formula for conditional expectations 
  in terms of Boolean cumulants and a corresponding analytic formula
  for conditional expectations of resolvents,
  generalizing subordination formulas for both additive and multiplicative
  free convolutions.
  In the final section we illustrate the results with step by step explicit
  computations and an exposition of all necessary ingredients.
\end{abstract}

\keywords{Free Probability, subordination, Boolean cumulants, conditional expectation}
\subjclass[2010]{Primary: 46L54. Secondary:  60B20.}

\maketitle

\tableofcontents{}
\section{Introduction}

Free probability was introduced in \cite{VoiculescuAdd} and it can be
understood as a non-commutative counterpart of classical probability theory. It
is non-commutative in the sense that multiplication of random variables is not
necessarily commutative. The classical notion of stochastic independence is not
very natural in this context, but fortunately there are other notions of
independence which lead to interesting theories and applications.
Among several possible notions of non-commutative independence free independence is the
most prominent one, see
Section~\ref{sec:preliminaries} below. Free independence
shares many  properties with classical independence. In particular,
the joint distribution of free random variables $X$  and $Y$ is
uniquely determined by the individual distributions of $X$ and $Y$.
We can thus define additive and multiplicative convolutions of probability
measures
as follows:
Let  $\mu$ and $\nu$ be probability measures on the real line and
$X$ and $Y$ selfadjoint free random variables with distribution $\mu$ 
and $\nu$. Then the free convolution, denoted $\mu\boxplus\nu$, is defined
as the distribution of  $X+Y$.
Similarly, under the additional assumption that the supports of $\mu$ and $\nu$ are contained in the positive half-line,  one can define free multiplicative convolution of
$\mu$ and $\nu$, denoted by $\mu\boxtimes\nu$, as the distribution of
$X^{1/2}YX^{1/2}$. 
These free convolutions can be studied  by methods of free harmonic analysis,
that is, in terms  of analytic functions well known from Nevanlinna theory.
For a probability measure $\mu$ on real line one defines the so called Cauchy transform via the formula
\begin{align*}
  G_\mu(z)=\int_{\R}\frac{1}{z-x}d\mu(x), \qquad z\in \C^+,
\end{align*}
where $\C^+$ is the complex upper half-plane $\C^+=\{z\in \C; \Im(z)>0\}$. The
Cauchy transform is an analytic map which takes values in $\C^{-}=-\C^{+}$
and has nontangential limit
\begin{equation*}
  \nontang{z\to\infty} zG(z)=1
  .
\end{equation*}
By this we mean
\begin{equation*}
  \lim_{\substack{\abs{z}\to\infty\\
      z\in\Gamma_\alpha}} zG(z)=1
\end{equation*}
for any  $\alpha>0$ where by $\Gamma_\alpha$ we denote the nontangential sector
\begin{equation*}
  \Gamma_\alpha = \{z=x+iy\mid y>0, \abs{x}<\alpha y\}
  .
\end{equation*}
In fact this property characterizes Cauchy transforms,
see \cite[Proposition~5.1]{BerVoicUnboundConv}.

There are two approaches to compute the additive free convolution.
The original method of Voiculescu uses the compositional inverse of
the Cauchy transform.
Consider compactly supported probability measure
$\mu$ then one can define in some neighbourhood of $0$ so called $R$-transform
as $R_\mu(z)=G^{-1}_\mu(z)-1/z$, which has the remarkable property that for
compactly supported probability measures $\mu,\nu$ one has 
\begin{align*}
	R_{\mu\boxplus\nu}(z)=R_\mu(z)+R_\nu(z),
\end{align*}
holding on a common domain of the three functions. 
Thus the $R$-transform plays the role of the logarithm of the characteristic function
in classical probability.
The second method is more popular today and uses subordination
which was discovered in \cite{VoiculescuAnaloguesOfEntropyI}
(and appears implicitly in \cite[Proposition~4]{Woess:1986:nearest}).
Subordination asserts that there exist analytic maps $\omega_1,\omega_2$ such that for $i=1,2$ one has $\omega_i:\C^{+}\mapsto\C^{+},$ $\Im\left(\omega_i(z)\right)\geq\Im(z)$, $\omega_i(iy)/iy\to 0$, when $y\to+\infty$ and
\begin{align}\label{eq:addSubordination}
	G_{\mu\boxplus\nu}(z)=G_\mu\left(\omega_1(z)\right)=G_\nu\left(\omega_2(z)\right), \mbox{ for } z\in\C^{+}.
\end{align}
This observation was generalized in \cite{Biane98} to the framework of
conditional expectations in von Neumann algebras (for details see
Section~\ref{sec:preliminaries} below),
and asserts that for free random variables $X,Y$ one has
\begin{equation}
\label{eq:addCondExp}
\E_\varphi\left[(z-X-Y)^{-1}| X\right]=(\omega_1(z)-X)^{-1},
\end{equation}
where $\omega_1$ is as above. After applying the trace $\varphi$ to both sides of the above equation one immediately gets \eqref{eq:addSubordination}. 

To tackle multiplicative convolution one defines the moment generating functions
\begin{align*}
  \mgfpsi_\mu(z) &= \int \frac{tz}{1-tz}\,d\mu(t)
  \\
  \mgf_\mu(z) &= \int \frac{1}{1-tz}\,d\mu(t) = 1 +\mgfpsi_\mu(z)
  .
\end{align*}
They are related to the Cauchy transform via the identity
$$
G_\mu(z) = \frac{1}{z}\mgf_\mu(1/z)
.
$$
Then multiplicative convolution can be calculated using the $S$-transform
 which is defined as the solution $S_\mu(z)$
of the equation
\begin{equation*}
  \psi_\mu
  \left(
    \frac{z}{1+z}S_\mu(z)
  \right)
  =z
\end{equation*}
in some domain and it was proved by \cite{Voiculescu:1987:multiplication} that
\begin{equation*}
  S_{\mu\boxtimes\nu}(z)=S_\mu(z)S_\nu(z)
  .
\end{equation*}
Alternatively, it was shown in \cite{BelinschiBercoviciPartiallyDefined} that the function
\begin{equation*}
  \Sigma_\mu(z) = \frac{\eta_\mu^{-1}(z)}{z}
,
\end{equation*}
where 
\begin{equation*}
  \eta_\mu(z) = \frac{\mgfpsi_\mu(z)}{1+\mgfpsi_\mu(z)}
\end{equation*}
is the \emph{Boolean cumulant generating function},
satisfies the same equation
\begin{equation}
  \label{eq:sigmamuboxtimesnu}
  \Sigma_{\mu\boxtimes\nu}(z)=\Sigma_\mu(z)\Sigma_\nu(z)
  .
\end{equation}
In \cite{Biane98} also multiplicative subordination was considered and the result asserts that for positive and free $X,Y$ there is an analytic function $F:\C^{+}\mapsto \C^{+}$, such that $\arg(F(z))\geq \arg(z)$ and
\begin{align}\label{eq:multSub}
	\E_\varphi        [
            z X^{1/2}YX^{1/2}(1-z X^{1/2}YX^{1/2})^{-1}
                    |
          X          ]=F(z)X(1-F(z)X)^{-1}.
\end{align}

The results from \cite{Biane98} were further generalized to the operator valued
setting in \cite{VoiculescuCoalgebra}.

In the present paper we provide a method for the analytic computation of the distribution of $X+f(X)Yf^*(X)$ where $X$ and $Y$ are free and $f$ is some Borel function of $X$. The main results can be summarized as follows.
\begin{enumerate}[(i)]
 \item We provide a new combinatorial formula 
  for the expectation of products of free random variables
  (Lemma~\ref{lem:intro1}) and as a corollary 
  a combinatorial formula for conditional expectations.
 \item
  The latter translates to an analytic formula
  for the conditional expectation  of the resolvent
  $\E_\varphi\left((z-X+f(X)Yf^*(X))^{-1}\|X\right)$ 
  (Theorem \ref{thm:intro1}) involving a subordination function
  similar to the one found by \cite{Biane98}.
 \item
  We find expansions
  of the free additive and free multiplicative subordination functions
  in terms of Boolean cumulants.
 \item
  The algebraic relations between the subordination
  function and various transforms give rise to 
  an algorithm for the computation of the distribution of
  $X+f(X)Yf^*(X)$. 
  In Section 6 we illustrate the algorithm with two examples,
  where explicit algebraic equations for the Cauchy transforms are found,
  from which exact analytic and combintorial information can be extracted.
\end{enumerate}
We state the announced new formulas for the conditional expectation
of alternating products of free random variables in terms of Boolean cumulants $\beta_n$
as a proposition which may be of independent interest.
\begin{proposition}\label{lem:intro1}
  Let $(X_1,\ldots,X_n)$ and $(Y_1,\ldots,Y_n)$ 
  be mutually free families in a noncommutative probability space
  $(\mathcal{A},\varphi)$, then
  \begin{equation}
    \label{eq:phiXY=phiYbetaXY}
    \varphi\left(X_1Y_1\ldots X_nY_n\right)=
    \sum_{k=0}^{n-1}\sum_{0<i_1<\ldots< i_k<n}
    \varphi\left(Y_{i_1}\ldots Y_{i_k}Y_n\right)\prod_{j=0}^{k}\beta_{2(i_{j+1}-i_j)-1}(X_{{i_j}+1},Y_{i_{j}+1},\ldots,X_{i_{j+1}}),
  \end{equation}
  where in the above sum for each fixed sequence $0< i_1<\ldots< i_k<n$ we fix $i_0=0$ and $i_{k+1}=n$.
  Consequently, if $\alg{B}$ is a subalgebra
  containing $\{X_1,X_2,\dots,X_n\}$ and free from $\{Y_1,Y_2,\dots,Y_{n-1}\}$ 
  then the conditional expectation can be written as
  \begin{equation*}
    E[X_1Y_1\ldots X_n|\alg{B}]=
    \sum_{k=0}^{n-1}\sum_{0<i_1<\ldots< i_k<n}
    Y_{i_1}\ldots Y_{i_k} \prod_{j=0}^{k}\beta_{2(i_{j+1}-i_j)-1}(X_{{i_j}+1},Y_{i_{j}+1},\ldots,X_{i_{j+1}}).
  \end{equation*}
\end{proposition}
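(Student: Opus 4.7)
The plan is to prove the moment identity \eqref{eq:phiXY=phiYbetaXY} by induction on $n$; the conditional-expectation corollary then follows by reading $\varphi(Y_{i_1}\cdots Y_{i_k}Y_n)$ on the right as $E[Y_{i_1}\cdots Y_{i_k}Y_n\,|\,\alg{B}]$ (legal because the $Y_i$ are free from $\alg{B}$) and appealing to the defining property of conditional expectation. The decisive combinatorial tool is a \emph{vanishing lemma}: for free subalgebras $\alg{X}$ and $\alg{Y}$, any Boolean cumulant $\beta_{2k}(c_1,c_2,\ldots,c_{2k})$ whose arguments alternate between $\alg{X}$ and $\alg{Y}$ and whose endpoints lie in different subalgebras vanishes. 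I would derive this from the Belinschi--Nica identity $\beta_n(a_1,\ldots,a_n)=\sum_{\pi\in\NCirr(n)}\prod_V\kappa_V$: since mixed free cumulants of free variables vanish, only $\pi$ with all blocks monochromatic contribute, and irreducibility forces $1$ and $n$ into the same block, which is impossible when the endpoints sit in different subalgebras.

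For the base case $n=1$, freeness gives $\varphi(X_1 Y_1)=\varphi(X_1)\varphi(Y_1)=\beta_1(X_1)\varphi(Y_1)$, matching the unique $k=0$ term. For the inductive step I apply the first-block moment--Boolean recursion
\[
\varphi(a_1\cdots a_N)=\sum_{l=1}^{N}\beta_l(a_1,\ldots,a_l)\,\varphi(a_{l+1}\cdots a_N)
\]
to $\varphi(X_1 Y_1\cdots X_n Y_n)$. By the vanishing lemma the even-$l$ terms drop out, yielding
\[
\varphi(X_1 Y_1\cdots X_n Y_n)=\sum_{m=1}^{n}\beta_{2m-1}(X_1,Y_1,\ldots,X_m)\,N_m,\qquad N_m:=\varphi(Y_m X_{m+1}Y_{m+1}\cdots X_n Y_n).
\]
Identifying $m=i_1$ (with $m=n$ corresponding to the $k=0$ summand), this already recovers the decomposition of the right-hand side along the first cut, so everything reduces to a \emph{companion identity} for the $Y$-starting moment:
\[
N_m=\sum_{k'\ge 0}\sum_{m<j_1<\cdots<j_{k'}<n}\varphi(Y_m Y_{j_1}\cdots Y_{j_{k'}}Y_n)\prod_{l=0}^{k'}\beta_{2(j_{l+1}-j_l)-1}(X_{j_l+1},Y_{j_l+1},\ldots,X_{j_{l+1}}),
\]
with $j_0=m$ and $j_{k'+1}=n$.

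The hard part will be this companion identity. A second pass of the Boolean recursion plus the vanishing lemma writes $N_m$ as a sum of products $\beta_{2p+1}(Y_m,X_{m+1},\ldots,Y_{m+p})\cdot\varphi(X_{m+p+1}\cdots X_n Y_n)$, in which the $X$-starting tail is covered by the inductive hypothesis but the $Y$-type Boolean cumulants are genuinely mixed and do not collapse to a single product of pure $Y$-moments---a short computation of $\beta_5(Y_m,X_{m+1},Y_{m+1},X_{m+2},Y_{m+2})$ already produces two free-cumulant terms. I see two workable routes: (a) strengthen the inductive hypothesis to a simultaneous statement covering both $X$-starting and $Y$-starting alternating moments, so that the two recursions interlock and the mixed $Y$-Boolean cumulants get resummed inside the induction; or (b) avoid recursion altogether and argue bijectively by expanding both sides in free cumulants---mixed-cumulant vanishing restricts the left-hand side to partitions $\pi\in\NC(2n)$ with $\pi\le X|Y$---and matching each such $\pi$ with a tuple $(k,(i_j)_{j=1}^{k},\rho,(\tau_j)_{j=0}^{k})$ in which $\{i_1,\ldots,i_k,n\}$ are exactly the $Y$-positions of $\pi$ not nested inside any $X$-block of $\pi$, $\rho\in\NC(k+1)$ is the partition they inherit, and each $\tau_j\in\NCirr$ is the restriction of $\pi$ to the $j$-th proposition $X$-block interval $[i_{j-1}+1,i_j]$, with irreducibility forced by the characterisation of the $i_j$. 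I expect the bijective route (b) to be the cleanest.
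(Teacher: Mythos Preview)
Your preferred route (b) is correct and is precisely the paper's argument. The paper expands $\varphi(X_1Y_1\cdots X_nY_n)$ in free cumulants, uses vanishing of mixed cumulants to split each contributing $\pi\in\NC(2n)$ as $\pi'\cup\pi''$ with $\pi'$ on the $X$-positions and $\pi''$ on the $Y$-positions, and then organizes the sum by the \emph{outer blocks} of $\pi'$: their convex hulls are exactly your intervals, the gaps between them are your exposed $Y$-positions $\{i_1,\dots,i_k,n\}$, and summing $\kappa_{\pi_0}$ over $\pi_0\in\NC$ on the exposed $Y$'s gives the factor $\varphi(Y_{i_1}\cdots Y_{i_k}Y_n)$ while summing $\kappa_{\pi_j}$ over $\pi_j\in\NCirr$ on each hull gives the Boolean cumulant via \eqref{eq:betaKappaconnection}. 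The only cosmetic difference is that the paper starts from the outer $X$-blocks rather than from ``$Y$'s not nested in any $X$-block''; these descriptions are equivalent, but the outer-block phrasing makes irreducibility of each $\tau_j$ immediate (the outer block itself connects the two endpoints), whereas your phrasing requires the short extra check you allude to. Your inductive route (a) is not used in the paper; the paper's second proof sketch invokes the vanishing result you state (from \cite{FMNS2}) but organizes the recursion differently.
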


It seems to be surprising that in the
framework of free random variables, Boolean cumulants appear quite
naturally.
However in the paper \cite{FMNS2} (see also \cite{JekelLiu:2019:operad}) 
deeper connections between freeness and Boolean cumulants are shown, in particular description of freeness in terms of Boolean cumulants and further applications are studied. We complement  proof of formula
\eqref{eq:phiXY=phiYbetaXY} with an alternative proof based on the results from \cite{FMNS2}.

The main contribution of the present paper confirms the importance of
Boolean cumulants with a combinatorial interpretation
of the coefficients of subordination functions in terms of Boolean cumulants.
While in the case of additive free convolution a probabilistic interpretation
of the coefficients of the corresponding subordination function 
is known  \cite[Proposition~4]{Woess:1986:nearest},
we are not aware of an analogous interpretation 
in the case of multiplicative free convolution.
The next corollary provides a combinatorial interpretation and in addition
an alternative series expansion in the additive case. 
The formulas from the following Corollary proved to be useful in \cite{SzpojanWesol2}.
\begin{corollary}\label{intro:Cor1}
  \begin{enumerate}[(i)]
   \item 
    If $X,Y$ are self--adjoint, bounded free random variables then the additive subordination
    function $\omega_1$ defined in \eqref{eq:addSubordination}
    has the expansion
    \begin{align*}
      \omega_1(z)=\sum_{n=0}^{\infty}\beta_{2n+1}(Y,(z-X)^{-1},\ldots,(z-X)^{-1},Y)
    \end{align*}
    in some
    neighbourhood of infinity.
   \item 
    If $X$ and $Y$ are positive  bounded free random variables then the
    multiplicative subordination function $F$ defined in \eqref{eq:multSub}
    has the expansion
    \begin{align*}
      F(z)=\sum_{n=0}^{\infty}\beta_{2n+1}(Y,X,\ldots,X,Y)z^{n+1}
    \end{align*}
    in some neighbourhood of $0$.
  \end{enumerate}
\end{corollary}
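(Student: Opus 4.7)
The plan is to derive both identities from the same template: expand the relevant resolvent as a geometric series, apply Proposition~\ref{lem:intro1} to each conditional-expectation summand, rearrange the result as a Cauchy product in a single scalar series, and then match the closed form with the subordination identities \eqref{eq:addCondExp} and \eqref{eq:multSub}.

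For part~(i), set $R=(z-X)^{-1}$ and expand $(z-X-Y)^{-1}=\sum_{m\ge 0}(RY)^m R$, which converges in operator norm for $\abs{z}$ large enough. Using the $W^*(X)$-bimodule property of $\E_\varphi[\,\cdot\,|X]$ together with $R\in W^*(X)$, each summand with $m\ge 1$ reduces to $R\,\E_\varphi[(YR)^{m-1}Y\,|\,X]\,R$. Proposition~\ref{lem:intro1} applied to the alternating product $(YR)^{m-1}Y$ (which has $m$ copies of $Y$, free from $W^*(X)$, and $m-1$ copies of $R$) rewrites the inner conditional expectation as a sum over compositions $d_0+\cdots+d_k=m$, with each term a product of Boolean cumulants $\beta_{2d_j-1}(Y,R,Y,\dots,Y)$ multiplied by $R^k$. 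Interchanging the sums over $m$ and over $(k;d_0,\dots,d_k)$ factorises the whole series as a geometric series whose base is
\[
B(z):=\sum_{d\ge 1}\beta_{2d-1}(Y,R,Y,\dots,Y),
\]
giving the closed form $\E_\varphi[(z-X-Y)^{-1}\,|\,X]=R(1-B(z)R)^{-1}$. Since $\omega_1-X=R^{-1}-(z-\omega_1)$, one also has $(\omega_1(z)-X)^{-1}=R(1-(z-\omega_1(z))R)^{-1}$, and comparison via~\eqref{eq:addCondExp} identifies $B(z)$ with the scalar governing $\omega_1$; after reindexing $d=n+1$ this is the desired series.

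Part~(ii) is structurally identical. Expand $zX^{1/2}YX^{1/2}(1-zX^{1/2}YX^{1/2})^{-1}=\sum_{m\ge 1}z^m X^{1/2}(YX)^{m-1}YX^{1/2}$, apply Proposition~\ref{lem:intro1} to each $\E_\varphi[(YX)^{m-1}Y\,|\,X]$, organise the Cauchy product by powers of $z$, and compare with $F(z)X(1-F(z)X)^{-1}$ from~\eqref{eq:multSub} to extract $F(z)$. I expect the main obstacle to be the combinatorial factorisation step: one must check that the block data $0=i_0<i_1<\cdots<i_{k+1}=m$ of Proposition~\ref{lem:intro1} are in bijection with compositions of $m$, and that the nested sums decouple across blocks because each cumulant depends only on its own alternating subsequence. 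Once this is in place, the remaining inversion of the geometric series is routine, and uniqueness of the subordination function follows from matching the two operator identities.
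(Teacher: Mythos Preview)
Your approach is essentially the paper's: expand the resolvent as a Neumann series, apply Proposition~\ref{lem:intro1} termwise to the conditional expectation, resum as a geometric series in the alternating Boolean cumulant series, and match against the known subordination identity. The only organisational difference is that the paper isolates the resummation step as a standalone result (Lemma~\ref{lem:31}, computing $\E_\varphi[(1-YX)^{-1}Y\mid\alg{B}]=\eta_X^Y(1)(1-\eta_X^Y(1)X)^{-1}$) and then specialises it to obtain both parts (Corollaries~\ref{cor:multsubord} and~\ref{cor:additivesubordination}), whereas you redo the computation separately in each case.

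One caveat on part~(i): your comparison step gives $R(1-B(z)R)^{-1}=(\omega_1(z)-X)^{-1}$, which forces $z-\omega_1(z)=B(z)$, not $\omega_1(z)=B(z)$. The paper's own Corollary~\ref{cor:additivesubordination} states the result as $\omega_1(z)=z-\sum_{n\ge0}\beta_{2n+1}(Y,(z-X)^{-1},\dots,Y)$, so the statement in the introduction is missing the leading $z-{}$; your ``identifies $B(z)$ with the scalar governing $\omega_1$'' should be sharpened accordingly. Also, to pass from the operator identity to the scalar one the paper applies $\varphi$ and invokes uniqueness of the analytic subordination function from \cite{Biane98}; you should make that step explicit rather than leave it as ``matching the two operator identities''.
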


Based on the Lemma \ref{lem:intro1}, we are able to generalize the method
of \cite{Biane98} and calculate the conditional expectation of the resolvent of  $X+f(X)Yf^*(X)$.
It  can be expressed by the following remarkably simple formula.
\begin{theorem}\label{thm:intro1}
  Let $X,Y$ be free, self--adjoint, bounded random variables and assume that $f$ is a bounded
  Borel function on the spectrum of $X$, then there exists a unique function
  $\delta$ such that for $z\in \C^{+}$ one has
  
\begin{equation*}
    \E_\varphi    [\left(z-X-f(X)Yf^*(X)\right)^{-1}| \alg{B}
         ]=\left(z-X-\delta(z)f(X)f^*(X)\right)^{-1}.
  \end{equation*}
\end{theorem}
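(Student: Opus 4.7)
The plan is to expand both sides as Neumann series for $|z|$ large, compute the conditional expectation on the left via Proposition~\ref{lem:intro1}, and reorganize the result into the Neumann expansion of a new resolvent.

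For $|z|$ sufficiently large, I would write
\[
\bigl(z-X-f(X)Yf^*(X)\bigr)^{-1}=\sum_{n=0}^{\infty} R\bigl(f(X)Yf^*(X)R\bigr)^n,\qquad R:=(z-X)^{-1},
\]
and rewrite the $n$-th term (for $n\geq 1$) as an alternating product $A_1\,Y\,A_2\,Y\cdots Y\,A_{n+1}$ with $A_1=Rf(X)$, $A_{n+1}=f^*(X)R$, and $A_j=f^*(X)Rf(X)$ for $1<j<n+1$. All $A_j$ lie in the commutative subalgebra $\alg{B}$ generated by $X$, and $Y$ is free from $\alg{B}$, so Proposition~\ref{lem:intro1} applies and writes $\E[A_1 Y\cdots Y A_{n+1}\mid\alg{B}]$ as a sum over cut-sequences $0<i_1<\cdots<i_k\leq n$, each cut-sequence contributing a product of Boolean cumulants $\beta_{2(i_{s+1}-i_s)-1}(A_{i_s+1},Y,A_{i_s+2},\ldots,A_{i_{s+1}})$ together with the $\varphi$-moments of $Y$ coming from freeness at the cut-points.

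The crucial step is to reorganize this double sum, over Neumann level $n$ and over cut-sequences, by grouping contributions by the number of cuts $k$ and then summing independently over the lengths of each interval block. Using the commutativity within $\alg{B}$ and the identity $f^*(X)Rf(X)=f(X)f^*(X)R$, all interior blocks have the same structural form and sum to a single scalar factor
\[
\delta(z):=\sum_{m\geq 0}\beta_{2m+1}\bigl(Y,\,f^*(X)Rf(X),\,Y,\,\ldots,\,f^*(X)Rf(X),\,Y\bigr),
\]
while the leftmost and rightmost blocks pull out, respectively, the outer operators $Rf(X)$ and $f^*(X)R$. Summing the resulting geometric-type series gives
\[
\sum_{k=0}^{\infty} R\bigl(\delta(z)\,f(X)f^*(X)\,R\bigr)^k=\bigl(z-X-\delta(z)\,f(X)f^*(X)\bigr)^{-1},
\]
establishing the identity for large $|z|$. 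Uniqueness of $\delta(z)$ is immediate, since for fixed $z$ the map $c\mapsto(z-X-c\,f(X)f^*(X))^{-1}$ is injective whenever $f(X)f^*(X)\neq 0$ (the degenerate case giving $\delta\equiv 0$), and both sides extend to all $z\in\C^+$ by analyticity.

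The main technical obstacle is the combinatorial rearrangement: one has to verify carefully that the contributions of every Neumann term and every cut-sequence collapse into powers of one and the same scalar function $\delta(z)$. This is precisely where multilinearity of the Boolean cumulants and the commutativity within $\alg{B}$—in particular the factorization $f^*(X)Rf(X)=f(X)f^*(X)R$—play their decisive role: without them the interior cumulants would depend on their position in the product and no single scalar $\delta$ could absorb them all.
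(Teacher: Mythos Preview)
Your overall strategy coincides with the paper's: establish the identity for large $|z|$ via the Neumann expansion together with the Boolean-cumulant formula for conditional expectations, then extend analytically. The paper organizes this slightly differently, first isolating the key computation as a separate lemma (Lemma~\ref{lem:31}) for $\E_\varphi[(1-YX)^{-1}Y\mid\alg{B}]$ and then applying a resolvent identity to pull out the boundary factors $Rf(X)$ and $f^*(X)R$, but the substance is the same.

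There are, however, two genuine problems with your execution.

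\emph{The combinatorics is garbled.} The product you condition on is $A_1YA_2\cdots YA_{n+1}$; this does not match the pattern in Proposition~\ref{lem:intro1} directly. The clean way (and the paper's way) is to use the $\alg{B}$-module property to strip off $A_1$ and $A_{n+1}$ first, so that you are left with $\E_\varphi[YTYT\cdots TY\mid\alg{B}]$ where $T=f^*(X)Rf(X)$. Then Corollary~\ref{cor:condexp} gives Boolean cumulants of the form $\beta_{2m+1}(Y,T,\ldots,T,Y)$---starting and ending with $Y$, not with $A$---multiplied by powers of the operator $T$, \emph{not} by ``$\varphi$-moments of $Y$''. Your later definition of $\delta(z)$ is in fact the correct one, but it is inconsistent with the $\beta(A_{i_s+1},Y,\ldots,A_{i_{s+1}})$ you wrote just above it, and the claim that boundary blocks ``pull out operators'' is meaningless for scalar Boolean cumulants. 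The commutativity of $\alg{B}$ plays no role here; what matters is that all interior $A_j$ equal the same $T$, so every interior cumulant contributes the same scalar.

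\emph{The analytic continuation is a real gap.} Writing ``both sides extend to all $z\in\C^+$ by analyticity'' is not enough: the left side $h(z)=\E_\varphi[(z-X-f(X)Yf^*(X))^{-1}\mid\alg{B}]$ is indeed analytic on $\C^+$, but you must show that for every $z\in\C^+$ it has the \emph{form} $(z-X-c\,f(X)f^*(X))^{-1}$ for some scalar $c$, and that this $c=\delta(z)$ depends analytically on $z$. This requires proving that $h(z)$ is invertible on $\C^+$ and analyzing $h(z)^{-1}$. The paper does this via a spectral argument (Lemma~\ref{lem:convexhull}) showing that the spectrum of $h(z)$ lies in a disk bounded away from $0$, whence $\Im\langle h(z)^{-1}\xi,\xi\rangle>\Im z$ and $\delta(z)$ can be read off from $h(z)^{-1}$ and continued analytically with $\Im\delta(z)<0$. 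Without such an argument the extension from large $|z|$ to all of $\C^+$ is unjustified.
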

We will see that the subordination function $\delta(z)$ appearing in the theorem above can be
determined by means of free multiplicative convolutions. Consequently
Theorem~\ref{thm:intro1} gives rise to
an algorithm to compute the distribution of $X+f(X)Yf^*(X)$ for any
bounded Borel function. We will illustrate this with some examples in
Sections~\ref{sec:examples} and~\ref{sec:algebraic} below.
\begin{corollary}\label{thm:intro2}
	Let $X,Y$ be free, self--adjoint and bounded then one has
	\begin{align*}
          G_{X+f(X)Yf^*(X)}(z)=\int_{\mathbb{R}}\frac{1}{z-x-\delta(z)\abs{f(x)}^2}d\mu(x),
	\end{align*}

where $\delta(z)$ is the unique function from Theorem~\ref{thm:intro1}.
\end{corollary}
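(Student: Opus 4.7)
The proof should follow directly by applying the trace $\varphi$ to the identity established in Theorem~\ref{thm:intro1}. Since $\mathcal{B}$ is the von Neumann subalgebra generated by $X$, and $\varphi$ restricted to conditional expectations satisfies $\varphi = \varphi \circ \E_\varphi[\,\cdot\,|\mathcal{B}]$, the plan is to chain the definition of the Cauchy transform, the tower property of conditional expectation, and Theorem~\ref{thm:intro1}:
\begin{equation*}
  G_{X+f(X)Yf^*(X)}(z)
  = \varphi\bigl((z-X-f(X)Yf^*(X))^{-1}\bigr)
  = \varphi\bigl(\E_\varphi[(z-X-f(X)Yf^*(X))^{-1}|\mathcal{B}]\bigr)
  = \varphi\bigl((z-X-\delta(z)f(X)f^*(X))^{-1}\bigr).
\end{equation*}

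At this point the integrand on the right-hand side is a function of $X$ alone. First I would note that $f(X)f^*(X)$ is, via the Borel functional calculus, simply the operator $|f|^2(X)$, so that
\begin{equation*}
  z - X - \delta(z) f(X) f^*(X) = g_z(X), \qquad g_z(x) = z - x - \delta(z)|f(x)|^2.
\end{equation*}
Next I would observe that for $z \in \mathbb{C}^+$ the function $g_z$ is nonvanishing on the spectrum of $X$: indeed, since $\delta$ is the subordination function obtained by Theorem~\ref{thm:intro1}, it maps $\mathbb{C}^+$ into the closed upper half-plane (this should be read off from the analytic properties of the resolvent produced in the theorem), and because $|f(x)|^2 \geq 0$ while $x \in \mathbb{R}$, the imaginary part of $g_z(x)$ is strictly positive. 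Hence $g_z^{-1}$ is a bounded Borel function on $\sigma(X)$ and functional calculus applies.

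Finally, using the fact that for a bounded self-adjoint operator $X$ with spectral distribution $\mu$ one has $\varphi(h(X)) = \int_\mathbb{R} h(x)\,d\mu(x)$ for every bounded Borel $h$, I would conclude
\begin{equation*}
  \varphi\bigl(g_z(X)^{-1}\bigr) = \int_\mathbb{R} \frac{1}{z - x - \delta(z)|f(x)|^2}\,d\mu(x),
\end{equation*}
which is the claimed formula. The only non-routine step is the invertibility/integrability argument for $g_z$, but this is already guaranteed by the existence statement of Theorem~\ref{thm:intro1}, since the resolvent $(z-X-\delta(z)f(X)f^*(X))^{-1}$ must itself be a bounded operator in the algebra. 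No further obstacle is anticipated; the result is essentially the scalar trace of Theorem~\ref{thm:intro1}.
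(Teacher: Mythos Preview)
Your approach is correct and matches the paper's: the paper simply says the formula ``follows by applying $\varphi$ to both sides'' of the conditional expectation identity, which is exactly your chain of equalities followed by the spectral integral. One minor slip: by Theorem~\ref{thm:41}\eqref{it:mainthm:delta} the subordination function $\delta$ maps $\mathbb{C}^+$ into the closed \emph{lower} half-plane $\mathbb{C}^-\cup\mathbb{R}$, not the upper one; with the correct sign your imaginary-part computation $\Im g_z(x)=\Im z - |f(x)|^2\Im\delta(z)\geq \Im z>0$ goes through, and in any case you rightly note that invertibility is already forced by the theorem.
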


The problem of determining the distribution of $p(X_1,\ldots,X_n)$ for arbitrary
noncommutative self-adjoint polynomials $p$ in free, self-adjoint
random variables $X_1,\ldots,X_n$ was solved
in \cite{BelinschiMaiSpeicherSubordination} using the so-called
\emph{linearization trick}, which lifts the problem to that of matrix
valued additive free convolution. 
This method allows to compute numerical approximations
of the probability density function for arbitrary self-adjoint polynomials in
arbitrary free random variables $X_1,\ldots,X_n$; 
however it does now allow to extract combinatorial informations
about the moments. For the latter it is necessary to obtain more 
explicit equations.
In theory, tools from computational algebraic geometry allow to calculate
an explicit algebraic equation for the Cauchy transform of an arbitrary
polynomial $p(X_1,\ldots,X_n)$ in free random variables, provided that the
individual Cauchy transforms of the latter are algebraic.
Practice shows however that the complexity of the  algebraic systems arising
from the linearization trick exceeds the capacities of current hardware and software 
and to  our knowledge at the time of this writing no nontrivial example of an explicit
calculation has been found. 

We illustrate our result with several examples, showing that in specific cases
these technical limitations can be overcome and a description of the
distribution of functions of free variables by  means of explicit algebraic
equations for the Cauchy transforms and recurrence relations for the moments
can be obtained.

The examples include the distribution of $p+pXp$, where $p$ is a projection and
$X$ is semicircular. This is no new result, since this distribution is
identical to the distribution of $p(1+X)p$ and one can calculate it by means of free multiplicative convolution, but it is the simplest
example showing how the new method works.
We also determine the Cauchy transform of $X+XYX$ where the law of $X$ and $Y$ 
is either the semicircle or the arcsine distribution  and 
it turns out that the Cauchy transform of the resulting distribution
satisfies an algebraic
equation of degree 11, which is a new result. We devote
Section~\ref{sec:algebraic}
to an outline of this rather tedious procedure.
The method presented is fairly general in the
sense that for any polynomial $f$ and for any free random variables $X,Y$ with
algebraic Cauchy transforms one can follow the same steps to obtain an equation
for the Cauchy transform of $X+f(X)Yf^*(X)$.

Let us remark that the method presented in this paper also works in the
operator valued setting, replacing the analytic functions by operator valued functions,
see \cite{BelinschiSpeicherTreilhard:2015:operator}.

The paper is organized as follows:

In Section~\ref{sec:preliminaries} we recall some basic facts from free
probability theory, we recall the notion of conditional expectation. This
section contains also a review of the relation between free and Boolean
cumulants.

In Section~\ref{sec:boolean} we prove the basic Lemma~\ref{lem:intro1}
and we discuss the expansions of free additive and
multiplicative subordination functions announced in Corollary
\ref{intro:Cor1}.

Section~\ref{sec:subordination} contains a proof of the main Theorem~\ref{thm:intro1}.

In Section~\ref{sec:examples} we describe an algorithm based on
Theorem~\ref{thm:intro1} which allows calculate the distribution of $X+f(X)Yf^*(X)$, 
which amounts to a proof of Corollary~\ref{thm:intro2}. 
We finish this section with some explicit examples.

In Section~\ref{sec:algebraic} we calculate the distribution of $X+XYX$
for free semicircular random variables $X,Y$. There is no explicit formula
for the Cauchy transform, indeed it is an algebraic function, satisfying
an algebraic equation of order $11$.
This example can serve as a template for the calculation of an algebraic
equation satisfied the the Cauchy transform of $X+f(X)Yf^*(X)$,
starting from a polynomial $f$ and free random variables $X$ and $Y$
both having algebraic Cauchy transforms. We discuss also shortly the distribution of $X+XYX$ when $X$ and $Y$ have arcsine distribution.

\emph{Acknowledgements.}
We are grateful to V.~Vasilchuk for discussions at an early stage of this
project. He contributed an independent random matrix proof  \cite{Vasilchuk:2018:asymptotic} of the algorithm developed
in section~\ref{sec:examples} below.
We further acknowledge numerous corrections and improvements suggested
by an anonymous referee.

\section{Preliminaries}
\label{sec:preliminaries}
In this section we give a brief introduction to free probability. We mention only notions and facts that are needed in the present paper. For detailed introductions we refer to \cite{VoiDykNica,NicaSpeicherLect,MingoSpeicher}.
\subsection{Free independence}
We will work in the framework of finite von Neumann algebras, thus we assume that we
are given a von Neumann algebra $\mathcal{A}$ and a faithful, normal, tracial
state $\varphi$.  We will refer to the pair $(\mathcal{A},\varphi)$ as a
\emph{non-commutative probability space} or ncps for short. Given such a pair Voiculescu defined in \cite{VoiculescuAdd} a new notion of independence called \emph{freeness} (or \emph{free independence}). 
\begin{definition}
	We say that subalgebras $\left(\mathcal{A}_i\right)_{i\in I}$ of the ncps $\mathcal{A}$ are \emph{free} if
	\begin{align*}
	\varphi\left(a_1\cdots a_n\right)=0
	\end{align*}
	whenever
        the random variables $a_k$,  $k=1,2,\ldots,n$ are centered  with
        respect to $\varphi$, i.e., $\varphi\left(a_k\right)=0$, and
        neighbouring random variables come from different subalgebras, that is
        $a_k\in\mathcal{A}_{i_k}$ for $k=1,2,\ldots,n$ with $i_j\neq i_{j+1}$ for $j=1,2,\ldots,n-1$.
\end{definition}

The above condition allows to calculate mixed moments of free random variables
in terms of marginal ones, however the resulting expressions for mixed moments
are rather complicated.
It is thus more convenient to work with the so-called \emph{free cumulants}
\cite[Lecture 11]{NicaSpeicherLect},
which we now discuss briefly together with some of the properties which are
useful in the following.
Free cumulants are defined using the lattice of non-crossing partitions
and have the advantage that freeness can be characterized by
\emph{vanishing of mixed free cumulants}.

\subsection{Set partitions}

A \emph{partition} of the set  $[n]=\{1,\ldots,n\}$ is
a set $\pi=\{A_1,\ldots,A_k\}$ of disjoint nonempty subsets  such that
$\bigcup_{i=1}^k A_i=\{1,\ldots,n\}$.
The sets $A_1,\ldots,A_k$ are called the \emph{blocks} of $\pi$ and we write
$i\sim_\pi j$ to mean that $i,j\in[n]$ are in the same block of $\pi$.
We equip it with a lattice structure by defining a partial order $\leq$
where we define
$\pi\leq \sigma$ 
if for every block $A\in\pi$ there is a block $B\in\sigma$ such that
$A\subseteq B$. The maximal element in this lattice is the partition
consisting of only one block and it is denoted by $\hat{1}_n$.
It is customary to depict partitions as diagrams of the kind shown in
Fig.~\ref{fig:partitions}.
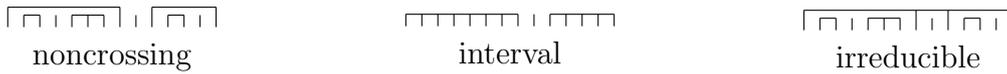
\begin{figure}
  \begin{minipage}{0.3\linewidth}
    \begin{center}
      \makeatletter{}\begin{picture}(80,6.5)(1,0)
\put(2,0){\line(0,1){7.5}}
\put(8,0){\line(0,1){4.5}}
\put(14,0){\line(0,1){4.5}}
\put(20,0){\line(0,1){4.5}}
\put(26,0){\line(0,1){4.5}}
\put(32,0){\line(0,1){4.5}}
\put(38,0){\line(0,1){4.5}}
\put(44,0){\line(0,1){7.5}}
\put(50,0){\line(0,1){4.5}}
\put(56,0){\line(0,1){7.5}}
\put(62,0){\line(0,1){4.5}}
\put(68,0){\line(0,1){4.5}}
\put(74,0){\line(0,1){4.5}}
\put(80,0){\line(0,1){7.5}}
\put(8,4.5){\line(1,0){6}}
\put(20,4.5){\line(1,0){0}}
\put(26,4.5){\line(1,0){12}}
\put(2,7.5){\line(1,0){42}}
\put(50,4.5){\line(1,0){0}}
\put(62,4.5){\line(1,0){6}}
\put(74,4.5){\line(1,0){0}}
\put(56,7.5){\line(1,0){24}}
\end{picture} 

      noncrossing
    \end{center}
  \end{minipage}
    \begin{minipage}{0.3\linewidth}
    \begin{center}
      \makeatletter{}\begin{picture}(80,3.5)(1,0)
\put(2,0){\line(0,1){4.5}}
\put(8,0){\line(0,1){4.5}}
\put(14,0){\line(0,1){4.5}}
\put(20,0){\line(0,1){4.5}}
\put(26,0){\line(0,1){4.5}}
\put(32,0){\line(0,1){4.5}}
\put(38,0){\line(0,1){4.5}}
\put(44,0){\line(0,1){4.5}}
\put(50,0){\line(0,1){4.5}}
\put(56,0){\line(0,1){4.5}}
\put(62,0){\line(0,1){4.5}}
\put(68,0){\line(0,1){4.5}}
\put(74,0){\line(0,1){4.5}}
\put(80,0){\line(0,1){4.5}}
\put(2,4.5){\line(1,0){42}}
\put(50,4.5){\line(1,0){0}}
\put(56,4.5){\line(1,0){24}}
\end{picture} 

      interval
    \end{center}
  \end{minipage}
    \begin{minipage}{0.3\linewidth}
    \begin{center}
      \makeatletter{}\begin{picture}(80,6.5)(1,0)
\put(2,0){\line(0,1){7.5}}
\put(8,0){\line(0,1){4.5}}
\put(14,0){\line(0,1){4.5}}
\put(20,0){\line(0,1){4.5}}
\put(26,0){\line(0,1){4.5}}
\put(32,0){\line(0,1){4.5}}
\put(38,0){\line(0,1){4.5}}
\put(44,0){\line(0,1){7.5}}
\put(50,0){\line(0,1){4.5}}
\put(56,0){\line(0,1){7.5}}
\put(62,0){\line(0,1){4.5}}
\put(68,0){\line(0,1){4.5}}
\put(74,0){\line(0,1){4.5}}
\put(80,0){\line(0,1){7.5}}
\put(8,4.5){\line(1,0){6}}
\put(20,4.5){\line(1,0){0}}
\put(26,4.5){\line(1,0){12}}
\put(50,4.5){\line(1,0){0}}
\put(62,4.5){\line(1,0){6}}
\put(74,4.5){\line(1,0){0}}
\put(2,7.5){\line(1,0){78}}
\end{picture} 

      irreducible
    \end{center}
  \end{minipage}
  \caption{Partition diagrams}
  \label{fig:partitions}
\end{figure}

We will work with two sublattices, namely the lattices of
\emph{noncrossing partitions} and \emph{interval partitions}.

A partition $\pi$ is called \emph{noncrossing} if any ordered quadruple
$p_1<q_1<p_2<q_2$ cannot satisfy $p_1\sim_\pi p_2$ and $q_1\sim_\pi q_2$ unless
$p_1,p_2,q_1,q_2$ are in the same block of $\pi$.

We denote the set of all non-crossing partitions of $[n]$  by $\NC(n)$
and it can be shown that the restriction of the partial order defined above turns
it into a sublattice.

An \emph{interval partition}  is a partition $\pi$ of $[n]$ 
such that every block is an interval, i.e.,  $i\sim_\pi k$ and $i<j<k$  then
$i,j,k$ are in the same block of $\pi$. 
The set of all interval partitions is denoted by $\IP(n)$.
Again the restriction of the partial order defined above turns it into a sublattice.

\subsection{Another partial order on noncrossing partitions}
The lattice of interval partitions $\IP(n)$ is a sublattice of the lattice of
noncrossing partitions $\NC(n)$. We can thus define the \emph{interval closure}
$\hat{\pi}\in\IP(n)$ of a noncrossing partition $\pi\in\NC(n)$ as the smallest
interval partition which dominates $\pi$.

A noncrossing partition $\pi\in\NC(n)$ is called \emph{irreducible} if
$\hat{\pi}=\hat{1}_n$;
combinatorially this means that $\pi$ is irreducible if $1\sim_\pi n$.
We denote the set of irreducible noncrossing partitions by $\NCirr(n)$.
Any partition can be written uniquely as a \emph{concatenation} of irreducible
partitions. For example, the first partition in Fig.~\ref{fig:partitions} is
the concatenation $\pi=\pi_1\pi_2\pi_3$ where
$\pi_1=
\makeatletter{}\begin{picture}(44,6.5)(1,0)
\put(2,0){\line(0,1){7.5}}
\put(8,0){\line(0,1){4.5}}
\put(14,0){\line(0,1){4.5}}
\put(20,0){\line(0,1){4.5}}
\put(26,0){\line(0,1){4.5}}
\put(32,0){\line(0,1){4.5}}
\put(38,0){\line(0,1){4.5}}
\put(44,0){\line(0,1){7.5}}
\put(8,4.5){\line(1,0){6}}
\put(20,4.5){\line(1,0){0}}
\put(26,4.5){\line(1,0){12}}
\put(2,7.5){\line(1,0){42}}
\end{picture} 
$,
$\pi_2=
\makeatletter{}\begin{picture}(2,3.5)(1,0)
\put(2,0){\line(0,1){10}}
\put(2,10){\line(1,0){0}}
\end{picture} 
$
and
$\pi_3=
\makeatletter{}\begin{picture}(26,6.5)(1,0)
\put(2,0){\line(0,1){7.5}}
\put(8,0){\line(0,1){4.5}}
\put(14,0){\line(0,1){4.5}}
\put(20,0){\line(0,1){4.5}}
\put(26,0){\line(0,1){7.5}}
\put(8,4.5){\line(1,0){6}}
\put(20,4.5){\line(1,0){0}}
\put(2,7.5){\line(1,0){24}}
\end{picture} 
$.

Belinschi and Nica  \cite{BelinschiNicaBBPforKTuples} defined a coarsening of
the usual partial order on noncrossing partitions by defining
$\pi\ll\sigma$ if and only if $\pi \leq \sigma$ and
for any block $B\in\sigma$ there is a block $C\in\pi$ such that
$\min(B)=\min(C)$ and $\max(B)=\max(C)$.
That is, if for every block $S\in\sigma$ the restrictions of $\pi|_S$ is irreducible.
In particular a partition $\pi$ is irreducible if and only if $\pi\ll\hat{1}_n$.

\subsection{Free cumulants}
Using the lattice of non-crossing partitions one can define the so called
\emph{free cumulants} $\kappa_n$.
To do so we introduce the following notation.
Given a sequence of multilinear functionals  $a_n:\mathcal{A}^n\mapsto \C$, $n=1,2,\dots$,
and a partition $\pi$ of $[n]$ we define the partitioned functional $a_\pi$ by
setting
\begin{equation*}
  a_\pi(X_1,\ldots, X_n)=\prod_{B\in \pi} a_{|B|}\left(X_1,\ldots, X_n|B\right),
\end{equation*}
where
\begin{equation*}
  a_{|B|}\left(X_1,\ldots, X_n|B\right)=a_{|B|}\left( X_i;i\in B\right).   
\end{equation*}
Then the \emph{free cumulants} are uniquely determined by the recursive equations
\begin{equation}
  \label{eq:deffreecumulants}
  \varphi(X_1\cdots X_n)=\sum_{\pi\in \NC(n)}\kappa_\pi(X_1,\ldots,X_n).
\end{equation}
More generally we then have
\begin{align*}
  \varphi_\sigma(X_1,\ldots, X_n)
  =\sum_{\substack{\pi\in \NC(n)\\
         \pi\leq\sigma
        }}
    \kappa_\pi(X_1,\ldots,X_n)
\end{align*}
where
\begin{equation*}
  \varphi_n(X_1,\ldots, X_n)=\varphi(X_1X_2\dotsm X_n).
\end{equation*}
It turns out that freeness can be characterized in terms of free cumulants as
follows \cite[Theorem~11.16]{NicaSpeicherLect}:
Subalgebras $\mathcal{A}_i\subseteq\mathcal{A}$ are free
if  $\kappa_k\left(X_1,X_2,\ldots,X_n\right)=0$ whenever $n\geq2$,
each $X_j$ lies in one of the subalgebras and at least two different subalgebras appear.

\subsection{Boolean cumulants}
Boolean cumulants are defined analogously by replacing
the lattice of noncrossing partitions
in   \eqref{eq:deffreecumulants}  by the lattice of interval partitions:
\begin{equation*}
  \varphi(X_1\cdots X_n)=\sum_{\pi\in \IP(n)}\beta_\pi(X_1,\ldots,X_n).
\end{equation*}
The vanishing of Boolean cumulants characterizes another instance of
non-commutative independence called \emph{Boolean independence}.
In the present paper we are not interested in Boolean independence and
therefore skip the definition. One of our main results is that Boolean cumulants are appear naturally in calculation of conditional expectations of functions of free random variables. 

\subsection{Relation between free and Boolean cumulants}
It follows from the M\"obius inversion formula that free and Boolean cumulants
determine each other and there is 
an explicit formula, see
\cite{Lehner:2002:connected,BelinschiNicaBBPforKTuples,ArizmendiHasebeLehnerVargas:2015:relations}:
\begin{equation}\label{eq:betaKappaconnection}
  \beta_n(X_1,\ldots,X_n)=
  \sum_{\pi\in\NCirr(n)}
  \kappa_\pi(X_1,\ldots,X_n)
\end{equation}
and more generally for $\pi\in NC(n)$
\begin{equation*}
  \beta_\pi(X_1,\ldots,X_n)=
  \sum_{\substack{\rho\in \NC(n)\\  \rho\ll\pi}}
  \kappa_\rho(X_1,\ldots,X_n).
\end{equation*}

\subsection{Generating functions}
The combinatorial relations discussed above give rise to functional relations
between various generating functions. In the present paper
the following functions will play a role.
\begin{enumerate}[1.]
 \item The \emph{moment generating function} of a random variable $X$ is the function
  $$
  \mgfpsi_X(z) = \sum_{n=1}^\infty \varphi(X^n)z^n.
  $$
 \item In the algebraic computations of Section~\ref{sec:algebraic} it will be
  more convenient to consider the \emph{augmented moment generating function}
  $$
  \mgf_X(z) = \sum_{n=0}^\infty \varphi(X^n)z^n = 1 + \mgfpsi_X(z).
  $$
 \item The \emph{Boolean cumulant generating function}
  $$
  \eta_X(z) = \sum_{n=1}^\infty \beta_n(X)z^n
  $$
  satisfies the relation
  $$
  \mgf_X(z) = \frac{1}{1-\eta_X(z)}
  $$
  i.e.,
  $$
  \eta_X(z) = \frac{\mgfpsi_X(z)}{1+\mgfpsi_X(z)}.
  $$
 \item The \emph{shifted Boolean cumulant generating function}
  \begin{equation}
    \label{eq:scgf}
    \tilde{\eta}_X(z) = \sum_{n=1}^\infty \beta_n(X)z^{n-1} = \frac{\eta_X(z)}{z}.
  \end{equation}
 \item Given two random variables $X$ and $Y$, the \emph{alternating Boolean
    cumulant generating function} 
  \begin{equation}
    \label{eq:abcgf}
    \abcgf{Y}{X}(z) = \sum_{n=0}^\infty \beta_{2n+1}(Y,X,Y,X,\dots,X,Y)z^{2n+1}
  \end{equation}
  will play a central role. As we shall see below (see
  Corollary~\ref{cor:multsubord}), it is essentially
  the multiplicative subordination function~\eqref{eq:multSub}.
\end{enumerate}

\subsection{Conditional expectations in von Neumann algebras}
In this subsection we briefly recall the notion of conditional expectation in
von Neumann algebras. For more details we refer to \cite{Takesaki}.

Assume that $(\mathcal{A},\varphi)$ is a  $W^*$-probability space, i.e.,
$\mathcal{A}$ is a finite von Neumann algebra and $\varphi$ a faithful, normal,
tracial state. Then for any von Neumann subalgebra $\mathcal{B}\subset
\mathcal{A}$  there exists a faithful, normal projection
$\E_\mathcal{B}:\mathcal{A}\to \mathcal{B}$,  called the \emph{conditional
  expectation} onto the subalgebra $\mathcal{B}$ with respect to $\varphi$, such that
$\varphi\circ\E_\mathcal{B}=\varphi$.
It is a unital $\alg{B}$-module map, i.e.,
$E_\alg{B}(Y_1XY_2)=Y_1E_\alg{B}(X)Y_2$, for all $X\in\mathcal{A}$ and $Y_1,Y_2\in\mathcal{B}$. 
In other words,
$\E_\mathcal{B}(X)$ is the unique element $Z\in\alg{B}$ such that for any
$Y\in\mathcal{B}$ 
one has $\varphi(XY)=\varphi(ZY)$.
For a fixed element $X\in\mathcal{A}$ we denote by $\E_X$ 
the conditional expectation onto the von Neumann subalgebra generated by $X$.

\section{Boolean cumulants and subordination functions}
\label{sec:boolean}

In this section we prove a lemma which plays a crucial role in this paper.
It is essentially the same as  Biane's subordination result for free
multiplicative convolution~\cite[Proposition~3.6]{Biane98}. One can follow the steps of Biane's proof with one additional summation at the and. However we present here a shorter proof.
In addition we reveal a combinatorial interpretation of the subordination function
in terms of Boolean cumulants, which was not
of interest in \cite{Biane98}, but which provides combinatorial information
about the subordination function which is essential in the sequel.  For another
interpretation of the subordination function as generating function of certain
``taboo'' probabilities in the context of random walks see
\cite[Proposition~4]{Woess:1986:nearest}.
We also present a sketch of an alternative proof which uses recent
characterization of freeness in terms of Boolean cumulants given in \cite{FMNS2,JekelLiu:2019:operad}. We then use this lemma to give a precise formula for the
conditional expectation of certain resolvents.
To this end we first expand alternating joint moments
free random variables in terms of moments of the first
and mixed Boolean cumulants.

\subsection{Yet another formula for expectations of free random variables}
\begin{lemma}[$X$ and $Y$ exchanged]
\label{prop:31}
  Let $(X_1,\ldots,X_n)$ and $(Y_1,\ldots,Y_n)$ be mutually free families in a
  noncommutative probability space $(\mathcal{A},\varphi)$, then
  \begin{multline}    \varphi\left(Y_1X_1 Y_2X_2\ldots Y_nX_n\right)
    \\
    =
    \sum_{k=0}^{n-1}\sum_{0<i_1<\ldots< i_k<n}
    \varphi\left(X_{i_1}\ldots X_{i_k}X_n\right)\prod_{j=0}^{k}\beta_{2(i_{j+1}-i_j)-1}(Y_{{i_j}+1},X_{i_{j}+1},\ldots,Y_{i_{j+1}}),
  \end{multline}
  where in the above sum for each fixed sequence $0< i_1<\ldots< i_k<n$
  we fix $i_0=0$ and $i_{k+1}=n$. 
\end{lemma}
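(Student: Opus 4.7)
The plan is to expand both sides of the identity in terms of free cumulants and match them via a bijection on monochromatic noncrossing partitions. By the moment--cumulant formula, the LHS equals $\sum_{\pi \in \NC(2n)} \kappa_\pi(Y_1, X_1, \ldots, Y_n, X_n)$. Since $(X_1, \ldots, X_n)$ and $(Y_1, \ldots, Y_n)$ are mutually free, mixed free cumulants vanish and only partitions $\pi$ whose blocks each consist solely of $Y$-entries or solely of $X$-entries contribute; for such $\pi$ we may write $\pi = \pi_Y \sqcup \pi_X$ with $\kappa_\pi = \kappa_{\pi_Y}\kappa_{\pi_X}$.

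The key combinatorial step is to exhibit a bijection between such monochromatic $\pi$ and the indexing data $(k, \vec{\imath}, \sigma, \vec{\tau})$ appearing on the RHS. Given $\pi$, the restriction $\pi_Y$ to the $Y$-indices $\{1,\ldots,n\}$ has a canonical irreducible decomposition: there are unique indices $0 = i_0 < i_1 < \cdots < i_k < i_{k+1} = n$ such that $\pi_Y$ is the concatenation of irreducible NC partitions $\tau_j^Y$ on the sub-intervals $(i_j, i_{j+1}]$. A noncrossing compatibility analysis (described below) then shows that $\pi_X$ must decompose as $\sigma \sqcup \tau_0^X \sqcup \cdots \sqcup \tau_k^X$, where $\sigma$ is an arbitrary NC partition on the ``outer'' $X$-indices $\{i_1, \ldots, i_k, n\}$ (the right endpoints of the $Y$-irreducible components) and $\tau_j^X$ is an arbitrary NC partition on the ``inner'' $X$-indices $\{i_j+1, \ldots, i_{j+1}-1\}$ compatible with $\tau_j^Y$. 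Setting $\tau_j = \tau_j^Y \sqcup \tau_j^X$ produces a monochromatic irreducible NC partition on the alternating substring $Y_{i_j+1}, X_{i_j+1}, \ldots, Y_{i_{j+1}}$ of length $2(i_{j+1}-i_j)-1$, and the assignment $\pi \leftrightarrow (k, \vec{\imath}, \sigma, \vec{\tau})$ is the required bijection.

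With the bijection in hand, I would expand the RHS using the moment--cumulant formula $\varphi(X_{i_1}\cdots X_{i_k}X_n) = \sum_\sigma \kappa_\sigma(X_{i_1},\ldots,X_n)$ together with the identity \eqref{eq:betaKappaconnection}, $\beta_m(Z_1,\ldots,Z_m) = \sum_{\tau \in \NCirr(m)} \kappa_\tau$; by freeness the latter again reduces to a sum over monochromatic irreducible $\tau$. Distributing the product across all sums yields $\kappa_\sigma \prod_j \kappa_{\tau_j} = \kappa_{\sigma \sqcup \tau_0 \sqcup \cdots \sqcup \tau_k} = \kappa_\pi$, and the bijection of the previous paragraph converts the summation over tuples $(k,\vec{\imath},\sigma,\vec{\tau})$ into the summation over monochromatic $\pi$, thus matching the LHS.

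The main obstacle is the compatibility analysis underlying the bijection. Specifically, one has to show that if two $X$-indices $a < b$ lie in a common $\pi$-block, then the $Y$-indices in $(a,b]$ must form a union of complete $\pi_Y$-blocks, and consequently that either both $a,b$ belong to the outer set $\{i_1,\ldots,i_k,n\}$ or both lie strictly inside a single sub-interval $(i_j,i_{j+1}]$; conversely one must verify that for any choice of $\sigma$ and compatible $\tau_j^X$ the reconstituted partition remains noncrossing on $[2n]$. The case analysis is elementary but must be carried out carefully, since it is precisely this rigidity that forces the RHS to have the particular shape stated in the lemma. Once it is in place, the matching of cumulant expansions is purely formal.
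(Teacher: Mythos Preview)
Your proposal is correct and follows essentially the same approach as the paper's proof: both expand the LHS via the free moment--cumulant formula, use vanishing of mixed cumulants to restrict to monochromatic $\pi=\pi_Y\sqcup\pi_X$, decompose according to the irreducible components of $\pi_Y$, and then identify the sum over the remaining $X$-partition on the ``outer'' indices $\{i_1,\dots,i_k,n\}$ with $\varphi(X_{i_1}\cdots X_{i_k}X_n)$ and the sums over the irreducible pieces with Boolean cumulants via \eqref{eq:betaKappaconnection}. Your write-up is somewhat more explicit about the bijection and the noncrossing compatibility check than the paper's terse ``All sets in this construction are uniquely determined,'' but the underlying argument is the same.
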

\makeatletter{}\begin{proof}[Proof 1]
  We start with the cumulant expansion \eqref{eq:deffreecumulants}
  \begin{align*}
    \varphi\left(Y_1X_1 Y_2X_2\ldots Y_nX_n\right)
    &= \sum_{\pi\in\NC(2n)}\kappa_\pi(Y_1,X_1,Y_2,X_2,\dots,Y_n,X_n);
      \intertext{
      now mixed cumulants vanish and
      if we relabel the set $\{1,2,\dots,2n\}$
      to $\{1',1'',2',2'',\dots,n',n''\}$
      we can decompose
      every contributing partition into two
      parts $\pi'\in\NC(\{1,3,\dots,2n-1\})\simeq \NC(\{1',2',\dots,n'\})$ and
      $\pi''\in\NC(\{2,4,\dots,2n\})\simeq\NC(\{1'',2'',\dots,n''\})$ 
      }
    &= \sum_{\substack{\pi',\pi''\\
             \pi'\cup\pi''\in\NC(2n)}}
      \kappa_{\pi'}(Y_1,Y_2,\dots,Y_n)
      \,
      \kappa_{\pi''}(X_1,X_2,\dots,X_n)
  \end{align*}
  Now assume that $\pi'$ has $k$ outer blocks,
  say $B_1,B_2,\dots, B_k$ and let $I_j=\overline{B_j}$
  be their convex hulls.
  Then the restrictions $\pi_j=\pi|_{I_j}$ are irreducible
  (the unique outer block being $B_j$ from $\pi'$) and we can decompose
  $\pi=\pi_0\cup\pi_1\dots\pi_k$
  where $\pi_0\in\NC(B_0)$ is a noncrossing partition
  of the complement $B_0=[2n]\setminus
  \left(\overline{B_1}\cup\overline{B_2}\cup\dots\cup \overline{B_k}\right)$.
  Note that 
  $2n\in B_0$.
  All sets in this construction are uniquely determined,
  see the example in Fig.~\ref{fig:partition123213etc}
  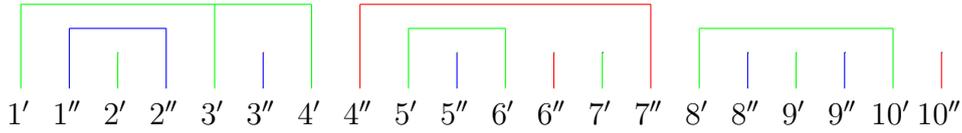
\begin{figure}
    \centering
    \makeatletter{}\begin{tikzpicture}[scale=\textwidth/160cm]
  \foreach\i in {1,...,10}
    \coordinate[color=green,label=center:$\i'$] (p) at (-10.3+12*\i,-3);
  \foreach\i in {1,...,10}
    \coordinate[color=blue,label=center:$\i''$] (p) at (-4.3+12*\i,-3);
\draw[color=green] (2,0)--(2,10.5);
\draw[color=blue] (8,0)--(8,7.5);
\draw[color=green] (14,0)--(14,4.5);
\draw[color=blue] (20,0)--(20,7.5);
\draw[color=green] (26,0)--(26,10.5);
\draw[color=blue] (32,0)--(32,4.5);
\draw[color=green] (38,0)--(38,10.5);
\draw[color=red] (44,0)--(44,10.5);
\draw[color=green] (50,0)--(50,7.5);
\draw[color=blue] (56,0)--(56,4.5);
\draw[color=green] (62,0)--(62,7.5);
\draw[color=red] (68,0)--(68,4.5);
\draw[color=green] (74,0)--(74,4.5);
\draw[color=red] (80,0)--(80,10.5);
\draw[color=green] (86,0)--(86,7.5);
\draw[color=blue] (92,0)--(92,4.5);
\draw[color=green] (98,0)--(98,4.5);
\draw[color=blue] (104,0)--(104,4.5);
\draw[color=green] (110,0)--(110,7.5);
\draw[color=red] (116,0)--(116,4.5);
\draw[color=green] (14,4.5)--(14,4.5);
\draw[color=blue] (8,7.5)--(20,7.5);
\draw[color=red] (32,4.5)--(32,4.5);
\draw[color=green] (2,10.5)--(38,10.5);
\draw[color=green] (56,4.5)--(56,4.5);
\draw[color=green] (50,7.5)--(62,7.5);
\draw[color=green] (68,4.5)--(68,4.5);
\draw (74,4.5)--(74,4.5);
\draw[color=red] (44,10.5)--(80,10.5);
\draw (92,4.5)--(92,4.5);
\draw[color=green] (98,4.5)--(98,4.5);
\draw (104,4.5)--(104,4.5);
\draw[color=green] (86,7.5)--(110,7.5);
\draw (116,4.5)--(116,4.5);
\end{tikzpicture} 
    \caption{A partition illustrating the proof of Lemma~\ref{prop:31}:
      $\pi'$ is green and has four irreducible components,
      $\pi''$ is blue and red, the latter marking the
      distinguished componet $\pi_0$}
    \label{fig:partition123213etc}
  \end{figure}
  and we
  can thus rearrange the sum.
  \begin{multline*}
    \varphi\left(Y_1X_1 Y_2X_2\ldots Y_nX_n\right)\\
  \begin{aligned}
    &= \sum_{\substack{B_0\subseteq\{2,4,\dots,2n\}\\ B_0=\{2i_1,2i_2,\dots,2i_k,
        2n\}}}
    \sum_{\pi_0\in\NC(B_0)} \kappa_{\pi_0}(X_{i_1},X_{i_2},\dots,X_{i_k},X_n)
    \prod_{j=1}^k
    \sum_{\pi_j\in\NCirr(\overline{B_j})}
    \kappa_{\pi_j}(Y_{i_{j-1}+1},X_{i_{j-1}+1},\dots,Y_{i_j-1})
    \\
    &= \sum_{\substack{B_0\subseteq\{2,4,\dots,2n\}\\ B_0=\{2i_1,2i_2,\dots,2i_k,
        2n\}}}
    \varphi(X_{i_1}X_{i_2}\dots X_{i_k}X_n)
    \prod_{j=1}^k
    \beta(Y_{i_{j-1}+1},X_{i_{j-1}+1},\dots,Y_{i_j-1})
  \end{aligned}
  \end{multline*}
  where relation \eqref{eq:betaKappaconnection} was used.

\end{proof}

\begin{proof}[Sketch of a proof based on \cite{FMNS2}]
  First observe that from \cite[Theorem 1.2]{FMNS2} it follows that a mixed
  Boolean cumulant of $\{Y_1,\ldots,Y_n\}$ and $\{X_1,\ldots,X_n\}$ vanishes
  whenever the first variable is free from the last one. Thus in the expansion 
  \begin{align*}
    \varphi\left(Y_1X_1Y_2X_2\cdots Y_nX_n\right)=
    \sum_{\pi\in\IP(2n)} \beta_{\pi}(Y_1,X_1,\ldots,Y_n,X_n)
  \end{align*}
  all partitions with blocks starting from one of the $X$'s and ending at one
  of the $Y$'s (or vice versa) do not contribute.
	
  Next we expand each block of $\beta_{\pi}(Y_1,X_1,\ldots,Y_n,X_n)$ according
  to \cite[Theorem 1.2]{FMNS2}. After this expansion we reorganize the sum in the
  following way: we pick $X_{i_1},\ldots,X_{i_k},X_n$ and consider only those
  partitions whose outer block contains exactly these $X$'s. Next we observe
  that each such choice corresponds to an interval partition $\pi\in \IP(k+1)$
  on $X_{i_1},\ldots,X_{i_k},X_n$ which fixes the outer block. The final
  observation is the following: using Theorem 1.2 from \cite{FMNS2} one can check that
  regardless if $X_{i_j}$ and $X_{i_{j+1}}$ are in the same block or not, the
  sum restricted to $Y_{i_j+1},X_{i_j+1},\ldots,Y_{i_{j+1}}$ always
  gives \[\beta_{2(i_{j+1}-i_j)-1}(Y_{{i_j}+1},X_{i_{j}+1},\ldots,Y_{i_{j+1}}).\] 
	On the other hand summing over all choices of interval partitions on $X_{i_1},\ldots,X_{i_k},X_n$ (corresponding to outer blocks of $Y$'s) we get
	\[\varphi\left(X_{i_1}\cdots X_{i_k}X_n\right),\]
	which completes the proof.
\end{proof}

The following corollary is an immediate consequence of Lemma~\ref{prop:31}
and will provide the basis for the further calculations.
\begin{corollary}
  \label{cor:condexp}
  Let $(\mathcal{A},\varphi)$ be a W$^*$-probability space and
  $\mathcal{B}$  a von Neumann subalgebra.
  Assume that $(X_1,\ldots,X_n)$,  $(Y_1,\ldots,Y_n)$ are two families such that
  $(X_1,\ldots,X_n)\subseteq\mathcal{B}$ and
  $(Y_1,\ldots,Y_n)$ is free from $\mathcal{B}$.
  Then the conditional expectation of alternating monomials can be evaluated as follows
	\begin{multline}\label{eq:EYXY}
          \E_\varphi\left[Y_1X_1Y_1\ldots X_{n-1}Y_n | \mathcal{B}\right]
          \\
          =
	\sum_{k=0}^{n-1}\sum_{1\leq i_1<\ldots< i_k\leq n-1}
	X_{i_1}\ldots X_{i_k}\prod_{j=0}^{k}\beta_{2(i_{j+1}-i_j)-1}(Y_{{i_j}+1},X_{i_{j}+1},\ldots,Y_{i_{j+1}}),
	\end{multline}
	where in the above sum for each sequence $0< i_1<\ldots< i_k<n$ we set $i_0=0$ and $i_{k+1}=n$.
\end{corollary}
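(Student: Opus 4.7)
The plan is to derive the formula via the universal property of the conditional expectation: recall that $\E_\varphi[W\mid\alg{B}]$ is characterized as the unique element $Z\in\alg{B}$ such that $\varphi(WB)=\varphi(ZB)$ for every $B\in\alg{B}$. So I will let $Z$ denote the right-hand side of \eqref{eq:EYXY} and verify these two conditions.

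First, $Z\in\alg{B}$ is immediate: each $X_{i_j}$ belongs to $\alg{B}$ by hypothesis, and the Boolean cumulants $\beta_{2(i_{j+1}-i_j)-1}(Y_{i_j+1},X_{i_j+1},\dots,Y_{i_{j+1}})$ are scalars, so $Z$ is a $\C$-linear combination of products of elements of $\alg{B}$.

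The crux is the identity $\varphi\bigl(Y_1X_1\dotsm X_{n-1}Y_n\,B\bigr)=\varphi(ZB)$ for arbitrary $B\in\alg{B}$. The key observation is that, since $B\in\alg{B}$, the augmented family $(X_1,\dots,X_{n-1},B)$ still lies in $\alg{B}$ and therefore remains free from $(Y_1,\dots,Y_n)$. Hence Lemma~\ref{prop:31} applies with $X_n$ played by $B$, yielding
\[
\varphi\bigl(Y_1X_1\dotsm Y_n\,B\bigr)
=\sum_{k=0}^{n-1}\sum_{0<i_1<\dots<i_k<n}
\varphi\bigl(X_{i_1}\dotsm X_{i_k}B\bigr)
\prod_{j=0}^{k}\beta_{2(i_{j+1}-i_j)-1}\bigl(Y_{i_j+1},X_{i_j+1},\dots,Y_{i_{j+1}}\bigr).
\]
Since each Boolean cumulant factor is a scalar, it may be pulled inside $\varphi$, and by linearity the right-hand side rewrites as $\varphi(ZB)$, matching term by term the definition of $Z$.

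With both properties verified, the universal property of the conditional expectation forces $Z=\E_\varphi[Y_1X_1\dotsm X_{n-1}Y_n\mid\alg{B}]$. I do not anticipate any real obstacle: the only thing that needs care is the bookkeeping of the index conventions $i_0=0$, $i_{k+1}=n$ to confirm that the sum over $0<i_1<\dots<i_k<n$ in Lemma~\ref{prop:31} is the same as the sum over $1\le i_1<\dots<i_k\le n-1$ appearing in the statement, and checking the extremal case $k=0$ (which produces a single Boolean cumulant $\beta_{2n-1}(Y_1,X_1,\dots,Y_n)$ multiplying $\varphi(B)$, matching the $k=0$ scalar term of $\varphi(ZB)$).
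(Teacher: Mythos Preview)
Your proposal is correct and is precisely the argument the paper has in mind: the paper simply states that the corollary is ``an immediate consequence of Lemma~\ref{prop:31}'' without spelling out details, and the natural way to make this immediate is exactly your use of the defining property of $\E_\varphi[\,\cdot\mid\alg{B}]$ together with Lemma~\ref{prop:31} applied with $X_n=B\in\alg{B}$.
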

For a different combinatorial approach to conditional expectations,
which avoids mixed cumulants, see \cite{Cebron:2013:free}.
On the other hand, this can be seen as another instance of the unshuffle coproduct
\cite[Definition~3.3]{EbrahimiFardPatras:2015:halfshuffles} appearing
in free probability; 
this connection will be investigated elsewhere.

The expansion \eqref{eq:EYXY} shows that for an alternating monomial
the conditional expectation
is in fact contained in the subalgebra generated by  $X_i$.
By standard arguments based on the Kaplansky density theorem
\cite[Theorem~44.1]{Conway:2000:course} and the weak* continuity
of the conditional expectation show that this property
passes to the strong closure.
\begin{corollary}
  \label{cor:kaplansky}
  Let $(\mathcal{A},\varphi)$ be a W$^*$-probability space and
  $\alg{C}\subseteq \alg{B}\subseteq \alg{A}$ nested von Neumann subalgebras.
  Given two families
   $(X_i)_{i\in I}$ and $(Y_j)_{i\in J}\subseteq \alg{A}$ such that
   $(X_i)_{i\in I}\subseteq \alg{C}$ and 
   $(Y_j)_{i\in J}$ is free from $\alg{B}$,
   and an element
   $Z\in( \{X_i \mid i\in I\}\cup \{Y_j \mid j\in J\})''$,
   we have
   $$
   \E[Z | B] \in \alg{C}.
   $$
\end{corollary}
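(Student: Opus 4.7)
The plan is to establish the claim first on the $*$-algebra $\alg{P}$ generated by $\{X_i \mid i\in I\} \cup \{Y_j \mid j\in J\}$, and then extend to its weak$^*$-closure $\alg{P}''$ by a density argument.

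For the algebraic step, let $W \in \alg{P}$ be a monomial in the letters $X_i, X_i^*, Y_j, Y_j^*$. Grouping consecutive letters of the same type, write
\[
W = A_0 B_1 A_1 B_2 \cdots B_k A_k,
\]
where each $A_\ell$ is a (possibly empty) word in the $X$'s and their adjoints and so lies in $\alg{C}$, and each $B_\ell$ is a nonempty word in the $Y$'s and their adjoints. Since $\alg{C} \subseteq \alg{B}$, the $\alg{B}$-bimodule property of $\E[\,\cdot\,|\,\alg{B}]$ allows the outer $A$-factors to be pulled out:
\[
\E[W \mid \alg{B}] \;=\; A_0 \, \E\bigl[B_1 A_1 B_2 \cdots A_{k-1} B_k \,\big|\, \alg{B}\bigr] \, A_k.
\]
The inner expression is an alternating $YXY\cdots Y$ product exactly of the form covered by Corollary~\ref{cor:condexp} (the family $\{Y_j, Y_j^*\}$ is still free from $\alg{B}$), so its conditional expectation lies in the algebra generated by the inner letters $A_1, \dots, A_{k-1} \in \alg{C}$. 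The corner cases are immediate: if $W$ is a pure $X$-word it already lies in $\alg{C} \subseteq \alg{B}$, while if $W$ is a pure $Y$-word it yields a scalar by freeness and traciality. Extending linearly, $\E[\,\cdot\,|\,\alg{B}]$ sends $\alg{P}$ into $\alg{C}$.

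For the analytic step, given $Z \in \alg{P}''$ the Kaplansky density theorem produces a net $(Z_\alpha) \subseteq \alg{P}$ with $\|Z_\alpha\| \leq \|Z\|$ and $Z_\alpha \to Z$ in the strong operator topology, hence in the $\sigma$-weak topology on the norm-bounded set where it lives. The conditional expectation is normal, so $\E[Z_\alpha \mid \alg{B}] \to \E[Z \mid \alg{B}]$ $\sigma$-weakly, and each $\E[Z_\alpha \mid \alg{B}] \in \alg{C}$ by the previous step. Since $\alg{C}$ is a von Neumann algebra, hence $\sigma$-weakly closed, the limit $\E[Z \mid \alg{B}]$ lies in $\alg{C}$ as claimed. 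The main point requiring care is the topology: the algebraic reduction is routine, but one must use normality of $\E[\,\cdot\,|\,\alg{B}]$ rather than mere norm continuity, and correspondingly extract from Kaplansky density a norm-bounded strong$^*$-approximating net so that the passage to the $\sigma$-weak limit is legitimate.
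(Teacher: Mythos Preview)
Your proof is correct and follows exactly the approach the paper indicates: the paper does not write out a detailed proof but merely states that the result follows from Corollary~\ref{cor:condexp} for alternating monomials together with ``standard arguments based on the Kaplansky density theorem and the weak* continuity of the conditional expectation'', which is precisely the two-step argument you have spelled out.
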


\subsection{A formula for conditional expectations of  resolvents}
The next lemma is the main result of this section.

\begin{lemma} \label{lem:31}
  Let $(\alg{A},\varphi)$ be an $W^*$--probability space and $\alg{B}\subseteq\alg{A}$ a von Neumann subalgebra. 
  Assume that an element  $Y\in\alg{A}$ is free from $\alg{B}$ 
  and pick an element $X\in\alg{B}$ such that  $\norm{Y}\norm{X}<1/5$. 
  Then the conditional expectation of the resolvent $(1-YX)^{-1}Y$
  is a resolvent again; more precisely
  \begin{align}\label{eq:MainLem}
    \E_\varphi\left[(1-YX)^{-1}Y | \alg{B} \right]=\eta^{Y}_{X}(1)(1-\eta^{Y}_{X}(1) X)^{-1},
  \end{align}
  where $\eta^{Y}_{X}(z)$ is the generating function
        \eqref{eq:abcgf} of
  alternating Boolean cumulants.
\end{lemma}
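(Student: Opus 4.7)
The plan is to expand the resolvent on the left-hand side into a Neumann series, apply Corollary~\ref{cor:condexp} term by term, and then recognize the resulting double series as a factorized geometric series whose sum is exactly the claimed resolvent on the right-hand side. The whole argument is essentially formal manipulation of generating functions; the hypothesis $\norm{Y}\norm{X}<1/5$ is used only to make all the rearrangements legitimate.

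First I would write $(1-YX)^{-1}Y = \sum_{m\ge 1} Y(XY)^{m-1}$, which converges in operator norm since $\norm{YX}<1$. Applying the bounded (in particular weak$^*$-continuous) conditional expectation term by term, I use Corollary~\ref{cor:condexp} with $Y_i=Y$ and $X_i=X$ for all $i$. The $X_{i_1}\cdots X_{i_k}$ factor collapses to $X^k$, and setting $d_j:=i_{j+1}-i_j$ with $i_0=0$, $i_{k+1}=m$ one obtains
\begin{equation*}
\E_\varphi\bigl[Y(XY)^{m-1}\bigm|\alg{B}\bigr]
=\sum_{k=0}^{m-1} X^k
\sum_{\substack{d_0,\dots,d_k\ge 1\\ d_0+\cdots+d_k=m}}
\prod_{j=0}^{k}\beta_{2d_j-1}(Y,X,\dots,X,Y).
\end{equation*}

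The algebraic heart of the proof is then to swap the sums over $m$ and $k$ and observe that summing over all compositions $(d_0,\dots,d_k)$ of every $m\ge k+1$ is the same as summing independently over $d_0,\dots,d_k\ge 1$. Since each Boolean cumulant is a scalar and thus commutes with powers of $X$, the inner sum factorizes:
\begin{equation*}
\sum_{d_0,\dots,d_k\ge 1}\prod_{j=0}^{k}\beta_{2d_j-1}(Y,X,\dots,X,Y)
=\Bigl(\sum_{d\ge 1}\beta_{2d-1}(Y,X,\dots,X,Y)\Bigr)^{k+1}
=\eta^{Y}_{X}(1)^{k+1}.
\end{equation*}
Hence
\begin{equation*}
\E_\varphi\bigl[(1-YX)^{-1}Y\bigm|\alg{B}\bigr]
=\sum_{k=0}^{\infty}X^k\,\eta^{Y}_{X}(1)^{k+1}
=\eta^{Y}_{X}(1)\bigl(1-\eta^{Y}_{X}(1)X\bigr)^{-1},
\end{equation*}
which is the desired identity, assuming $\abs{\eta^{Y}_{X}(1)}\norm{X}<1$ so that the final geometric series converges in norm.

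The step that I expect to require the most care, and that fixes the constant $1/5$, is the justification of absolute convergence of the doubly-indexed series and the Fubini-style exchange of summations. For this I would use the standard Speicher/Haagerup-type bound on free cumulants together with the expansion \eqref{eq:betaKappaconnection} to estimate $\abs{\beta_{2d-1}(Y,X,\dots,X,Y)}\le C^{2d-1}\norm{Y}^{d}\norm{X}^{d-1}$ via the Catalan-bounded count of irreducible noncrossing partitions. Combined with $\sum_{k\ge 0}\norm{X}^{k}\abs{\eta^{Y}_{X}(1)}^{k+1}<\infty$, the inequality $\norm{X}\norm{Y}<1/5$ comfortably covers all three convergence requirements simultaneously: (i) Neumann series for $(1-YX)^{-1}$, (ii) absolute summability of $\eta^{Y}_{X}(1)$, and (iii) invertibility of $1-\eta^{Y}_{X}(1)X$. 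Everything else is a bookkeeping exercise in rearranging the sum over compositions.
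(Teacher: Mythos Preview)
Your proposal is correct and follows essentially the same route as the paper: expand the Neumann series, apply Corollary~\ref{cor:condexp} term by term, reindex by compositions, factorize, and sum the resulting geometric series. The algebraic core is identical.

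The only substantive difference is in how you justify the cumulant estimate that drives the constant $1/5$. You propose to go through \eqref{eq:betaKappaconnection}, bounding free cumulants and then summing over irreducible noncrossing partitions. The paper instead uses the much more direct M\"obius inversion of Boolean cumulants over \emph{interval} partitions,
\[
\beta_{2n+1}(Y,X,\dots,X,Y)=\sum_{\pi\in\IP(2n+1)}(-1)^{|\pi|+1}\varphi_\pi(Y,X,\dots,X,Y),
\]
which immediately gives $\abs{\beta_{2n+1}}\le 2^{2n}\norm{Y}^{n+1}\norm{X}^n$ since $\abs{\IP(2n+1)}=2^{2n}$. Summing yields $\abs{\eta^Y_X(1)}\le \norm{Y}/(1-4\norm{X}\norm{Y})$, and the requirement $\abs{\eta^Y_X(1)}\norm{X}<1$ becomes exactly $5\norm{X}\norm{Y}<1$. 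Your route through free cumulants would work in principle but is more laborious and, without care, would produce a worse constant than $1/5$; the interval-partition estimate is the natural one here and explains precisely where the $1/5$ comes from.
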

\begin{proof}
  Let us first record the simple estimate
  \begin{align*}
    \left|\beta_{2n+1}(Y,X,\ldots,X,Y)\right|=\biggl|\sum_{\pi\in \IP(2n+1)}(-1)^{|\pi|+1}\varphi_{\pi}(Y,X,\ldots,X,Y)\biggr|\leq 2^{2n}\norm{Y}^{n+1}\norm{X}^n
        .
  \end{align*} 
  Together with our assumption $\norm{X}\norm{Y}<1/5$ this implies that
  the series for $\eta^{Y}_{X}(z)$ converges at $z=1$ and moreover the estimate
  $|\eta_{X}^{Y}(1)|\leq\frac{\norm{Y}}{1-4\,\norm{Y}\,\norm{X}}$ holds.
  
  The series $(1-YX)^{-1}=\sum_{n=0}^{\infty}(YX)^n$ converges absolutely and
  it remains to calculate   the $\alg{B}$-valued conditional expectations
  of the individual terms. From Corollary~\ref{cor:condexp} we infer
  \begin{equation}
    \label{eq:EphiYXn=sumkpi}
    \begin{aligned}
      \E_\varphi\left[(YX)^nY | \alg{B}\right]
      &= \sum_{k=0}^{n}\sum_{1\leq i_1<\ldots< i_k\leq n}
          X^k\prod_{j=0}^{k}\beta_{2(i_{j+1}-i_j)-1}(Y,X,Y,\ldots,X,Y)
      \\
      &= \sum_{k=0}^{n}\sum_{m_0+m_1+\dotsm+m_k=n-k}
          X^k\prod_{j=0}^{k}\beta_{2m_j+1}(Y,X,Y,\ldots,X,Y).
    \end{aligned}
  \end{equation}
  Since $|\eta_{X}^{Y}(1)|\leq\frac{\norm{Y}}{1-4\,\norm{Y}\,\norm{X}}$, 
  and $\norm{X}\norm{Y}<1/5$ we have $\norm{\eta_{X}^{Y}(1)X}<1$
  we can sum the identity
  \eqref{eq:EphiYXn=sumkpi}
  over $n\geq 0$
  and  obtain
  \begin{align*}
    \E_\varphi\left[(1-YX)^{-1}Y|\alg{B}\right]
    &= \sum_{k=0}^\infty \sum_{m_0,m_1,\dots,m_k=0}^\infty X^k
      \prod_{j=0}^k\beta_{2m_j+1}(Y,X,Y,\dots,X,Y)\\
    &=(1-\eta^{Y}_{X}(1) X)^{-1}\eta^{Y}_{X}(1)
  \end{align*}

  which completes the proof of the lemma.
\end{proof}

\begin{corollary}
  In the setting of Lemma~\ref{lem:31},
  \begin{align}\label{eq:E(1-XY)X}
    \E_\varphi\left[(1-XY)^{-1}X | \alg{B}\right]=X(1-\eta^{Y}_{X}(1) X)^{-1}.
  \end{align}
\end{corollary}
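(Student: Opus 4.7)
The plan is to derive this identity from Lemma~\ref{lem:31} by means of the elementary resolvent swap $(1-XY)^{-1}X=X(1-YX)^{-1}$, which follows from the trivial relation $(1-XY)X=X-XYX=X(1-YX)$. This converts the unknown $\E_\varphi[(1-XY)^{-1}X\mid\alg{B}]$ into a computation involving $(1-YX)^{-1}$, for which the previous lemma is readily applicable.

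First, since $X\in\alg{B}$ and the conditional expectation is a $\alg{B}$-bimodule map, the swap identity gives
$$\E_\varphi\left[(1-XY)^{-1}X\,\Bigmid\,\alg{B}\right]=\E_\varphi\left[X(1-YX)^{-1}\,\Bigmid\,\alg{B}\right]=X\cdot\E_\varphi\left[(1-YX)^{-1}\,\Bigmid\,\alg{B}\right].$$
It thus suffices to compute the conditional expectation of $(1-YX)^{-1}$ itself. Expanding $(1-YX)^{-1}=1+(1-YX)^{-1}YX$ and again exploiting $X\in\alg{B}$ yields
$$\E_\varphi\left[(1-YX)^{-1}\,\Bigmid\,\alg{B}\right]=1+\E_\varphi\left[(1-YX)^{-1}Y\,\Bigmid\,\alg{B}\right]X,$$
so that Lemma~\ref{lem:31} applies directly to the inner expectation.

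Writing $\alpha:=\eta_X^Y(1)$ for brevity, Lemma~\ref{lem:31} gives $\E_\varphi[(1-YX)^{-1}Y\mid\alg{B}]=\alpha(1-\alpha X)^{-1}$. Plugging in and collecting terms via the telescoping identity
$$1+\alpha(1-\alpha X)^{-1}X=(1-\alpha X)^{-1}\left[(1-\alpha X)+\alpha X\right]=(1-\alpha X)^{-1},$$
which is valid because $\norm{\alpha X}<1$ was verified in the proof of Lemma~\ref{lem:31}, one obtains $\E_\varphi[(1-YX)^{-1}\mid\alg{B}]=(1-\eta_X^Y(1)X)^{-1}$. Multiplying by $X$ on the left recovers formula~\eqref{eq:E(1-XY)X}.

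I do not anticipate any genuine obstacle: the entire derivation is a short algebraic rearrangement built on top of Lemma~\ref{lem:31}, and the only routine verification is the convergence of the Neumann series that appear, which is guaranteed throughout by the standing hypothesis $\norm{X}\norm{Y}<1/5$ (implying $\norm{XY},\norm{YX}<1$ and $\norm{\alpha X}<1$).
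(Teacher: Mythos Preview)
Your proof is correct and follows essentially the same approach as the paper: both use the swap $(1-XY)^{-1}X=X(1-YX)^{-1}$, expand $(1-YX)^{-1}=1+(1-YX)^{-1}YX$, pull the $X$'s through the conditional expectation via the $\alg{B}$-bimodule property, apply Lemma~\ref{lem:31}, and finish with the same telescoping identity. The only difference is cosmetic ordering of the factor $X$; the arguments are otherwise identical.
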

\begin{proof}
  The proof consists of a rearrangement of the formula in terms of the previous one:
  \begin{align*}
    \E[ (1-XY)^{-1}X| \alg{B}]
    &= \E[X(1-YX)^{-1}| \alg{B}]\\
    &= X\E[1+(1-YX)^{-1}YX| \alg{B}]\\
    &= X + X\E[(1-YX)^{-1}Y| \alg{B}]X\\
    &= X + X \eta^{Y}_{X}(1)(1-\eta^{Y}_{X,Y}(1) X)^{-1} X\\
    &= X(1 +(1-\eta^{Y}_{X}(1) X)^{-1}  \eta^{Y}_{X}(1)X\\
    &= X(1-\eta^{Y}_{X}(1) X)^{-1}.
  \end{align*}
\end{proof}

\subsection{Subordination for multiplicative and additive free convolutions}
We finish this section by showing how subordination of free additive and 
multiplicative convolution may be deduced from Lemma \ref{lem:31}.
For the
multiplicative convolution this is straightforward, however our approach to
additive convolution (Example~\ref{ex:addConv} below) seems to be new. 
In the next section we will generalize
this observation and obtain a method of calculating the distribution of $X+f(X)Yf^*(X)$, for $X,Y$ free.

\begin{corollary}[{Subordination for multiplicative free convolution}]
\label{cor:multsubord} 
Let $T$ be free from $\alg{B}$,  $S\in \alg{B}$  and 
assume that both  $S$ and $T$ are positive.
Then for small $z$ the multiplicative subordination function from \eqref{eq:multSub} is
given by the convergent series
\begin{align*}
F(z)&= \eta_S^{zT}(1)\\
    &= \sum_{n=0}^{\infty}\beta_{2n+1}(T,S,\ldots,S,T)z^{n+1}
      .
\end{align*}
Consequently, the alternating Boolean cumulant generating function     \eqref{eq:abcgf}
is given by
\begin{equation*}
  \eta_X^Y(z) = \frac{1}{z}F(z^2)
  .
\end{equation*}
\end{corollary}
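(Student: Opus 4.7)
My plan is to reduce \eqref{eq:multSub} to the form of Lemma~\ref{lem:31} by the substitution $Y := zT$, $X := S$. Let $\alg{B}$ be a von Neumann subalgebra containing $S$ with $T$ free from $\alg{B}$ (for instance $\alg{B}=W^*(S)$). For $|z|$ so small that $|z|\,\|T\|\,\|S\|<1/5$, which will guarantee the convergence of every series appearing below, I begin by expanding
\begin{equation*}
  zS^{1/2}TS^{1/2}\bigl(1-zS^{1/2}TS^{1/2}\bigr)^{-1}
  = \sum_{n\ge 0}z^{n+1}S^{1/2}(TS)^{n}TS^{1/2}
  = S^{1/2}\bigl[(1-zTS)^{-1}(zT)\bigr]S^{1/2}.
\end{equation*}

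Next I apply $\E_\varphi[\,\cdot\,|\alg{B}]$. Because $S^{1/2}\in\alg{B}$, the $\alg{B}$-module property lets me pull the $S^{1/2}$ factors out of the conditional expectation, so the left-hand side of \eqref{eq:multSub} becomes
\begin{equation*}
  S^{1/2}\,\E_\varphi\!\left[(1-(zT)S)^{-1}(zT)\,\bigm|\,\alg{B}\right]\,S^{1/2}.
\end{equation*}
Now Lemma~\ref{lem:31} with $Y=zT$, $X=S$ identifies the inner conditional expectation as $\eta_S^{zT}(1)\,(1-\eta_S^{zT}(1)\,S)^{-1}$, where multilinearity of Boolean cumulants gives
\begin{equation*}
  \eta_S^{zT}(1)
  = \sum_{n\ge 0}\beta_{2n+1}(zT,S,\ldots,S,zT)
  = \sum_{n\ge 0}\beta_{2n+1}(T,S,\ldots,S,T)\,z^{n+1}.
\end{equation*}
Since $\eta_S^{zT}(1)$ is a scalar it commutes with $S^{1/2}$, and the sandwich collapses to $\eta_S^{zT}(1)\,S\,(1-\eta_S^{zT}(1)\,S)^{-1}$, which matches exactly the right-hand side of \eqref{eq:multSub}. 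Uniqueness of the multiplicative subordination function then forces $F(z)=\eta_S^{zT}(1)$, yielding the stated series expansion.

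For the final identity $\eta_X^Y(z)=F(z^2)/z$, I just compare series: with $(S,T)=(X,Y)$,
\begin{equation*}
  \tfrac{1}{z}F(z^2)
  = \sum_{n\ge 0}\beta_{2n+1}(Y,X,\ldots,X,Y)\,z^{2n+1}
  = \eta_X^Y(z),
\end{equation*}
by the definition \eqref{eq:abcgf}. The one mildly delicate point is reconciling the conditional expectation in \eqref{eq:multSub}, which is onto $W^*(X)=W^*(S)$, with the one in Lemma~\ref{lem:31}, which is onto $\alg{B}$: Corollary~\ref{cor:kaplansky} shows that $\E_\varphi[\,\cdot\,|\alg{B}]$ applied to any element of the algebra generated by $S$ and $T$ already lies in $W^*(S)$, so by the tower property the two conditional expectations agree, and the argument goes through without modification.
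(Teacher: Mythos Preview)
Your proof is correct and follows essentially the same strategy as the paper: substitute $Y=zT$, $X=S$ into Lemma~\ref{lem:31} and invoke the uniqueness of Biane's subordination function. The only cosmetic difference is that the paper multiplies \eqref{eq:MainLem} on the left by $S$ (obtaining $\E_\varphi[(1-zST)^{-1}zST\mid\alg{B}]$) and then immediately applies $\varphi$ to pass to the scalar identity $\mgfpsi_{ST}(z)=\mgfpsi_S(\eta_S^{zT}(1))$, whereas you stay at the conditional-expectation level by working with the $S^{1/2}$ sandwich and matching \eqref{eq:multSub} directly; your extra paragraph invoking Corollary~\ref{cor:kaplansky} and the tower property is precisely what is needed to bridge $\E_\varphi[\,\cdot\mid\alg{B}]$ and $\E_\varphi[\,\cdot\mid S]$, a detail the paper sidesteps by collapsing to $\varphi$ first.
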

\begin{proof}
  Substitute $X=S$ and $Y=z T$ into equation \eqref{eq:MainLem} 
  for $z\in \mathbb{C}\setminus \mathbb{R}_+$. 
  After multiplying both sides by $S$ from the left one gets 
\begin{equation*}
  \E_\varphi\left[(1-zST)^{-1}zST | \alg{B}\right]=\eta^{zT}_{S}(1)S(1-\eta^{zT}_{S}(1) S)^{-1}.
\end{equation*}
Applying $\varphi$ to both sides of the above equation we get the following
relation for the moment generating functions:
\begin{equation}
  \label{eq:MSTz}
  \mgfpsi_{ST}(z)=\mgfpsi_S(\eta_{S}^{zT}(1))
  .
\end{equation}
On the other hand we know from \cite{Biane98} that there is a unique
analytic function $\omega$  on $\mathbb{C}\setminus\mathbb{R}_+$
satisfying the relation 
\begin{equation*}
  \mgfpsi_{ST}(z)=\mgfpsi_S(\omega(z)).
\end{equation*}
The  function $\omega$ is called the \emph{multiplicative subordination
  function} and since for $\abs{z}$ small the series
\begin{equation*}
  \eta^{zT}_{S}(1)=\sum_{n=0}^{\infty}\beta_{2n+1}(T,S,\ldots,S,T)z^{n+1},
\end{equation*}
converges,
we conclude that the multiplicative subordination function $\omega(z)$ is its analytic continuation.  
\end{proof}

\begin{corollary}[{Subordination for additive free convolution}]
\label{cor:additivesubordination}  
Let $X$ and $Y$ be selfadjoint free random variables, 
then for large $z$ the subordination function
 $\omega_1(z)$ from \eqref{eq:addCondExp} is given by the convergent series 
 \begin{equation*}
   \omega_1(z)=z-\sum_{n=0}^{\infty}\beta_{2n+1}(Y,(z-X)^{-1},Y,\ldots,(z-X)^{-1},Y).
 \end{equation*}

\end{corollary}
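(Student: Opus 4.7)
The strategy is to derive this identity directly from Lemma~\ref{lem:31}, specifically from equation~\eqref{eq:E(1-XY)X}, by substituting the resolvent $R=(z-X)^{-1}$ in place of $X$ in that formula.

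First I would write $\alg{B}$ for the von Neumann algebra generated by $X$ (so that $R=(z-X)^{-1}\in\alg{B}$) and note the elementary resolvent factorization $z-X-Y=(1-RY)R^{-1}$, which gives
\begin{equation*}
  (z-X-Y)^{-1}=(1-RY)^{-1}R.
\end{equation*}
Since $X$ and $Y$ are bounded and $\norm{R}=O(1/\abs{z})$ as $\abs{z}\to\infty$, for $\abs{z}$ sufficiently large we have $\norm{R}\norm{Y}<1/5$, and we may therefore apply the identity \eqref{eq:E(1-XY)X} (with $X$ replaced by $R$) to get
\begin{equation*}
  \E_\varphi\!\left[(1-RY)^{-1}R\,\big|\,\alg{B}\right]
  =R\bigl(1-\eta_R^Y(1)\,R\bigr)^{-1}.
\end{equation*}

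Next I would simplify the right-hand side by pulling $R$ into the resolvent:
\begin{equation*}
  R\bigl(1-\eta_R^Y(1)\,R\bigr)^{-1}
  =\bigl(R^{-1}-\eta_R^Y(1)\bigr)^{-1}
  =\bigl(z-X-\eta_R^Y(1)\bigr)^{-1}.
\end{equation*}
Comparing this with the defining relation \eqref{eq:addCondExp}, namely
$\E_\varphi[(z-X-Y)^{-1}\mid X]=(\omega_1(z)-X)^{-1}$,
and using that the map $c\mapsto (c-X)^{-1}$ is injective on scalars outside the spectrum of $X$, I read off
\begin{equation*}
  \omega_1(z)=z-\eta_R^Y(1)
  =z-\sum_{n=0}^{\infty}\beta_{2n+1}\bigl(Y,(z-X)^{-1},Y,\ldots,(z-X)^{-1},Y\bigr),
\end{equation*}
with the series being absolutely convergent thanks to the bound $\abs{\eta_R^Y(1)}\le\norm{Y}/(1-4\norm{Y}\norm{R})$ established in the proof of Lemma~\ref{lem:31}.

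The only real point to watch is the convergence regime. The algebraic identity has been obtained only for $\abs{z}$ large enough to guarantee $\norm{R}\norm{Y}<1/5$, but this is exactly a neighbourhood of infinity, which is what the statement claims. No analytic continuation beyond this is needed for the stated corollary. I do not foresee a genuine obstacle here: once the substitution $X\rightsquigarrow R$ is made, the proof is essentially a one-line algebraic manipulation plus the convergence check, and the combinatorial content is entirely packaged inside Lemma~\ref{lem:31}.
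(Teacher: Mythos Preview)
Your proof is correct and follows essentially the same route as the paper: factor $(z-X-Y)^{-1}=(1-(z-X)^{-1}Y)^{-1}(z-X)^{-1}$, apply \eqref{eq:E(1-XY)X} with $(z-X)^{-1}$ in place of $X$, and read off the subordination function. The only cosmetic difference is that you identify $\omega_1$ directly from the conditional-expectation form \eqref{eq:addCondExp}, whereas the paper first applies $\varphi$ and matches via the Cauchy transforms.
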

\begin{proof}
  Observe that
\begin{equation*}
  (z-X-Y)^{-1}=\left(1-(z-X)^{-1}Y\right)^{-1}(z-X)^{-1}
  .
\end{equation*} 
Let $X\in \alg{B}$ and assume that $Y$ is free from $\alg{B}$, then for $z$ large enough we can apply
\eqref{eq:E(1-XY)X} and get
\begin{equation*}
  \E_\varphi\left[(z-X-Y)^{-1} |
  \alg{B}\right]=(z-\eta^{Y}_{(z-X)^{-1}}(1)-X)^{-1}.
\end{equation*} 
Applying $\varphi$ to both sides of the above equation we get that
\[G_{X+Y}(z)=G_X\left(z-\eta^{Y}_{(z-X)^{-1}}(1)\right),\]
where
\begin{equation*}
  \eta^{Y}_{(z-X)^{-1}}(s)=\sum_{n=0}^{\infty}\beta_{2n+1}(\underbrace{Y,(z-X)^{-1},Y,\ldots,(z-X)^{-1},Y}_{
    \mbox{$2n+1$  arguments}})s^{2n+1}.
\end{equation*}
This means that  the additive subordination function $\omega_1$  is an analytic
continuation of
the function $H$ defined in some neighbourhood of infinity by the convergent series
\begin{equation*}
  H(z)=z-\sum_{n=0}^{\infty}\beta_{2n+1}(\underbrace{Y,(z-X)^{-1},Y,\ldots,(z-X)^{-1},Y}_{\mbox{$2n+1$ arguments}}).
\end{equation*}
\end{proof}

\section{Subordination for $X+f(X)Yf^*(X)$}
\label{sec:subordination}
In this section we show a new subordination result for free convolutions, 
namely an explicit formula for conditional expectation of the resolvent of
$X+f(X)Yf^*(X)$ for arbitrary Borel functions $f$.
We also present a method to  determine the subordination function enabling us
to calculate explicitly the distribution of $X+f(X)Yf^*(X)$. We assume for the whole section that $f$ is not constantly zero on the spectrum of $X$.

\begin{theorem}\label{thm:41}
  Let $(\alg{A},\varphi)$ be a $W^*$ probability space. Assume that $\alg{B}\subset\alg{A}$ is a von Neumann subalgebra. Let $X,Y$ be self--adjoint, such that $X\in\alg{B}$ and $Y$ is free from $\alg{B}$. Moreover assume that $f$ is a bounded Borel function on the spectrum of
  $X$, then there exists a function $\delta$ such that
  \begin{enumerate}[(i)]
   \item \label{it:mainthm:condexp}
    The function $\delta(z)$ is a subordination function in the sense that
    \begin{equation}
      \label{eq:mainthm:condexp}
    \E_\varphi\left[\left.\left(z-X-f(X)Yf^*(X)\right)^{-1}\right| \mathcal{B}\right]=\left(z-X-\delta(z)f(X)f^*(X)\right)^{-1}
    \end{equation}
    for $z\in \IC^+$.
   \item \label{it:mainthm:delta} 
    The function
    $\delta:\mathbb{C}^+\mapsto\mathbb{C}^-\cup\mathbb{R}$  is analytic and has
    nontangential limit
    $$
    \nontang{z\to\infty} \frac{\delta(z)}{z}= 0
    .
    $$
   \item \label{it:mainthm:deltaequation1}
    The functional equation
    \begin{equation}
       \label{eq:mainthm:deltaequation1}
      \mgfpsi_{f(X)(z-X)^{-1}f(X)^*Y}(1)=\mgfpsi_{f(X)(z-X)^{-1}f(X)^*}\left(\delta(z)\right)
    \end{equation}
    holds for $z\in\mathbb{C}^{+}$. 
   \item \label{it:mainthm:deltaequation2}
    Equivalently, the function $\delta(z)$ satisfies the fixed point equation
    \begin{equation}
       \label{eq:mainthm:deltaequation2}      
    \widetilde{\eta}_Y\left(\widetilde{\eta}_{f(X)(z-X)^{-1}f(X)^*}(\delta(z))\right)=\delta(z),      
    \end{equation}
    where $ \widetilde{\eta}(z)$ is the shifted Boolean cumulant generating
    function \eqref{eq:scgf}.
   \item The function $\delta$ is uniquely determined by
    \eqref{it:mainthm:delta} and \eqref{it:mainthm:deltaequation1},
    and by     \eqref{it:mainthm:delta} and \eqref{it:mainthm:deltaequation2}, respectively.
  \end{enumerate}

\end{theorem}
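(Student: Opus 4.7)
The plan is to reduce Theorem~\ref{thm:41} to Lemma~\ref{lem:31} via a resolvent (Woodbury/Schur-complement) factorisation, derive the functional equations \eqref{eq:mainthm:deltaequation1}--\eqref{eq:mainthm:deltaequation2} from the reduction, and then promote everything from the large-$|z|$ regime, where the Boolean-cumulant series for $\delta(z)$ converges, to all of $\C^+$ by analytic continuation.

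First I would set $S(z) = f^*(X)(z-X)^{-1}f(X) \in \alg{B}$, which coincides with $P(z) = f(X)(z-X)^{-1}f^*(X)$ since $f(X)$, $f^*(X)$, $(z-X)^{-1}$ all lie in the commutative von Neumann algebra generated by $X$. The Woodbury-type identity
\[
(A-BC)^{-1} = A^{-1} + A^{-1}B(1-CA^{-1}B)^{-1}CA^{-1},
\]
applied with $A = z-X$, $B = f(X)$, $C = Yf^*(X)$, gives
\[
(z-X-f(X)Yf^*(X))^{-1} = (z-X)^{-1} + (z-X)^{-1}f(X)(1-YS(z))^{-1}Yf^*(X)(z-X)^{-1}.
\]
For $|z|$ large enough we have $\|Y\|\,\|S(z)\| < 1/5$, so Lemma~\ref{lem:31} yields $\E_\varphi[(1-YS(z))^{-1}Y \mid \alg{B}] = \delta(z)(1-\delta(z)S(z))^{-1}$ with $\delta(z) := \eta^Y_{S(z)}(1)$. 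Substituting back and applying the same identity in reverse, now with $C = \delta(z)f^*(X)$, produces \eqref{eq:mainthm:condexp} in a neighbourhood of infinity.

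For \eqref{eq:mainthm:deltaequation1} I would multiply the identity of Lemma~\ref{lem:31} from the left by $P(z)=S(z)$, use the algebraic identity $P(1-YP)^{-1}Y = (1-PY)^{-1}PY$, and apply $\varphi$, identifying the left-hand side as $\mgfpsi_{PY}(1)$ and the right-hand side as $\mgfpsi_P(\delta(z))$. For \eqref{eq:mainthm:deltaequation2} I would convert \eqref{eq:mainthm:deltaequation1} into $\eta_{PY}(1) = \eta_P(\delta(z)) =: w$ via $\eta_W = \mgfpsi_W/(1+\mgfpsi_W)$ and invoke the multiplicative convolution identity \eqref{eq:sigmamuboxtimesnu}: since $Y$ is free from $\alg{B} \ni P$, one has $\eta_P^{-1}(w)\eta_Y^{-1}(w) = w$. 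Substituting $\eta_P^{-1}(w) = \delta(z)$ gives $\eta_Y(w/\delta(z)) = w$, and dividing by $w/\delta(z)$ together with the definition \eqref{eq:scgf} of $\tilde\eta$ yields $\delta(z) = \tilde\eta_Y(\tilde\eta_P(\delta(z)))$.

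For the global properties in \eqref{it:mainthm:delta} and the uniqueness assertion, I would note that the leading term of the defining series for $\delta(z)$ is $\beta_1(Y) = \varphi(Y)$ and that successive terms are of order $O(|z|^{-n})$, so $\delta(z)/z \to 0$ nontangentially. To extend $\delta$ from a neighbourhood of $\infty$ to all of $\C^+$ I would read \eqref{eq:mainthm:deltaequation2} as an implicit equation: for $z \in \C^+$ the Cauchy-transform-like map $\tilde\eta_P$ sends $\C^+$ into $\C^-\cup\R$ and $\tilde\eta_Y$ sends $\C^-\cup\R$ into itself, so a Denjoy--Wolff / Earle--Hamilton fixed-point argument yields a unique analytic $\delta:\C^+\to\C^-\cup\R$ with the prescribed asymptotics. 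This fixed-point characterisation furnishes the uniqueness in (v), while \eqref{eq:mainthm:condexp} and \eqref{eq:mainthm:deltaequation1} extend from a neighbourhood of $\infty$ to all of $\C^+$ by analyticity of both sides. The main obstacle is precisely this global extension: the combinatorial argument handles the power-series regime transparently, but rigorously controlling the ranges of $\tilde\eta_P$ and $\tilde\eta_Y$ and identifying an invariant domain for the fixed-point iteration requires careful Nevanlinna-theoretic bookkeeping.
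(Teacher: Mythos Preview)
Your reduction to Lemma~\ref{lem:31} via the Woodbury/resolvent identity and the derivation of \eqref{eq:mainthm:deltaequation1} and \eqref{eq:mainthm:deltaequation2} match the paper's argument essentially line for line (Proposition~\ref{prop:EzXfXYfX*} and the last two propositions of Section~\ref{sec:subordination}); the observation $S(z)=P(z)$ by commutativity is also used implicitly there.

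The divergence is in the global extension of $\delta$ to $\C^+$ and in uniqueness. You propose to run a Denjoy--Wolff/Earle--Hamilton iteration directly on \eqref{eq:mainthm:deltaequation2}, and you correctly identify the obstacle: for $z\in\C^+$ the operator $P(z)=f(X)(z-X)^{-1}f^*(X)$ is not self-adjoint, so the standard Nevanlinna mapping properties for $s\mapsto\tilde\eta_{P(z)}(s)$ are not available and producing an invariant domain is genuinely delicate. The paper sidesteps this entirely. It works with $h(z)=\E_\varphi[(z-X-f(X)Yf^*(X))^{-1}\mid\alg{B}]$, which is automatically analytic on $\C^+$; then, using that conditional expectation sends the spectrum of a normal element into the closed convex hull of the original spectrum (Lemma~\ref{lem:convexhull}), it shows $h(z)$ is invertible with $\Im\langle h(z)^{-1}\xi,\xi\rangle>\Im z$ for every unit vector $\xi$. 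Evaluating the identity $h(z)^{-1}=z-X-\delta(z)f(X)f^*(X)$, already known for large $|z|$, against a fixed $\xi$ with $f^*(X)\xi\ne0$ then \emph{defines} the analytic continuation of $\delta$ to all of $\C^+$ and yields $\Im\delta(z)<0$ for free. Uniqueness is established not via a contraction argument but via Haagerup's local injectivity of $\psi_T$ on an explicit disk, for $|z|$ large, after which analytic continuation does the rest.

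So your outline is correct in the power-series regime and for the functional equations, but for the global step the paper's operator-theoretic route is both shorter and avoids exactly the Nevanlinna bookkeeping you flagged as the main obstacle; it is worth knowing as an alternative to the fixed-point programme you sketch.
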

We split the proof of the above theorem into several propositions.
First we establish  \eqref{it:mainthm:condexp}.
\begin{proposition}
  \label{prop:EzXfXYfX*}
  Let $X,Y$ be bounded selfadjoint free random variables and $f$ 
  a bounded Borel function on the spectrum of
  $X$, then for $\abs{z}$ large enough, there exists a function $\delta$ such that
  \begin{equation*}
    \E_\varphi\left[\left.\left(z-X-f(X)Yf^*(X)\right)^{-1}\right|\mathcal{B}\right]=\left(z-X-\delta(z)f(X)f^*(X)\right)^{-1}.
  \end{equation*}
\end{proposition}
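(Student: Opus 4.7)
The plan is to expand the resolvent as a convergent geometric series (valid for $\abs{z}$ large enough) and then reduce the conditional expectation to a direct application of Lemma~\ref{lem:31}. Factoring
\begin{equation*}
(z-X-f(X)Yf^*(X))^{-1}=\sum_{n\geq 0}\bigl((z-X)^{-1}f(X)Yf^*(X)\bigr)^{n}(z-X)^{-1}
\end{equation*}
and regrouping, the $n$-th summand for $n\geq1$ takes the form
\begin{equation*}
(z-X)^{-1}f(X)\cdot Y[g(X)Y]^{n-1}\cdot f^*(X)(z-X)^{-1},
\end{equation*}
where $g(X):=f^*(X)(z-X)^{-1}f(X)\in\alg{B}$. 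Summing the middle factor over $n\geq1$ formally produces $(1-Yg(X))^{-1}Y$.

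Since $Y$ is free from $\alg{B}$, $g(X)\in\alg{B}$, and for $\abs{z}$ large $\norm{Y}\norm{g(X)}<1/5$, Lemma~\ref{lem:31} applies with $X$ replaced by $g(X)$ and gives
\begin{equation*}
\E_{\varphi}\bigl[(1-Yg(X))^{-1}Y\mid\alg{B}\bigr]=\eta^{Y}_{g(X)}(1)\bigl(1-\eta^{Y}_{g(X)}(1)g(X)\bigr)^{-1}.
\end{equation*}
I then set $\delta(z):=\eta^{Y}_{g(X)}(1)$, a scalar which depends on $z$ through $g(X)$. The $\alg{B}$-bimodule property of the conditional expectation together with absolute convergence of all the series involved let me pull the outer $\alg{B}$-valued factors through $\E_\varphi[\,\cdot\mid\alg{B}]$ and interchange expectation with summation, yielding
\begin{equation*}
\E_\varphi\bigl[(z-X-f(X)Yf^*(X))^{-1}\mid\alg{B}\bigr]=(z-X)^{-1}+\delta(z)(z-X)^{-1}f(X)\bigl(1-\delta(z)g(X)\bigr)^{-1}f^*(X)(z-X)^{-1}.
\end{equation*}

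It then remains to identify this right-hand side with the resolvent $(z-X-\delta(z)f(X)f^*(X))^{-1}$, which I verify by right-multiplying by $z-X-\delta(z)f(X)f^*(X)$: expanding and using the identities $f^*(X)(z-X)^{-1}f(X)=g(X)$ and $(1-\delta(z)g(X))^{-1}(1-\delta(z)g(X))=1$, every term collapses except the identity. The only genuine obstacle is bookkeeping: one has to choose the $\abs{z}$-large threshold so that it simultaneously controls the outer geometric expansion of the resolvent, the Boolean cumulant series defining $\eta^{Y}_{g(X)}(1)$ (the hypothesis of Lemma~\ref{lem:31}), and the Neumann series for $(1-\delta(z)g(X))^{-1}$; once these are all absolutely convergent, every rearrangement is justified and the proof reduces to a chain of substitutions.
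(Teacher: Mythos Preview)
Your proof is correct and follows essentially the same route as the paper. The paper reaches the same intermediate expression
\[
(z-X)^{-1}+(z-X)^{-1}f(X)\bigl(1-Y\,f^*(X)(z-X)^{-1}f(X)\bigr)^{-1}Y\,f^*(X)(z-X)^{-1}
\]
via the resolvent identity $(A-B)^{-1}=A^{-1}+A^{-1}(1-BA^{-1})^{-1}BA^{-1}$ together with the commutation $f(X)(1-Yg(X))^{-1}=(1-f(X)Yf^*(X)(z-X)^{-1})^{-1}f(X)$, whereas you arrive there by summing the geometric series; these are equivalent manipulations, and from that point both proofs apply Lemma~\ref{lem:31} with $g(X)=f^*(X)(z-X)^{-1}f(X)$ and define $\delta(z)=\eta^{Y}_{g(X)}(1)$ in the same way.
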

\begin{proof} 
  We start with a resolvent identity 
  \begin{multline*}
    \left(z-X-f(X)Yf^*(X)\right)^{-1}\\
    \begin{aligned}
    &=(z-X)^{-1}+\bigl(z-X- f(X)Yf^*(X)\bigr)^{-1}f(X)Yf^*(X) (z-X)^{-1}\\
    &=(z-X)^{-1}+(z-X)^{-1} \left(1- f(X)Y
      f^*(X)(z-X)^{-1}\right)^{-1}f(X)Yf^*(X) (z-X)^{-1}\\
    &=(z-X)^{-1}+(z-X)^{-1}f(X) \left(1-Y f^*(X)(z-X)^{-1}f(X)\right)^{-1}Yf^*(X) (z-X)^{-1}
    \end{aligned}
  \end{multline*}
  where we used the identity
  \[f(X) \left(1-Y f^*(X)(z-X)^{-1}f(X)\right)^{-1}= \left(1-f(X)Y f^*(X)(z-X)^{-1}\right)^{-1}f(X),\]
  which is immediate to verify.
  Now we apply the conditional expectation and obtain
  \begin{align*}
    \E_\varphi&\left[\left.\left(z-X-f(X)Yf^*(X)\right)^{-1}\right|\mathcal{B}\right]\\
    &=(z-X)^{-1}+(z-X)^{-1}f(X) E_\varphi\left[\left.\left(1-Y f^*(X)(z-X)^{-1}f(X)\right)^{-1}Y\right|\mathcal{B}\right]f^*(X) (z-X)^{-1}.
  \end{align*}
  In this form Lemma~\ref{lem:31} is applicable and yields
  \begin{align*}
    \E_\varphi&\left[\left.\left(z-X-f(X)Yf^*(X)\right)^{-1}\right|\mathcal{B}\right]\\
      &=(z-X)^{-1}+(z-X)^{-1}f(X) \delta(z)\left(1-\delta(z)f(X)(z-X)^{-1}f^*(X)\right)^{-1}f^*(X) (z-X)^{-1}\\
      &=(z-X-\delta(z)f(X)f^*(X))^{-1},
  \end{align*}
  where $\delta$ is the function given by
  \begin{equation}
    \label{eq:deltaseries}
  \begin{aligned}
    \delta(z)&=\eta_{f^*(X)(z-X)^{-1}f(X)}^Y(1)\\
             &=\sum_{n=0}^{\infty}\beta_{2n+1}(Y,f^*(X)(z-X)^{-1}f(X),Y,\ldots,f^*(X)(z-X)^{-1}f(X),Y)
  \end{aligned}
  \end{equation}
\end{proof}
Although the next proof follows closely the ideas of \cite[Section~3.3]{Biane98}
we include the details here for the reader's convenience.
First recall that the spectrum of any Hilbert space operator
$X$ is contained in the closure of its numerical range   \cite[Problem~214]{Halmos:hilbert2}
$$
W(X) = \{\langle X\xi,\xi\rangle \mid \xi\in\alg{H}, \norm{\xi}=1\}
$$
and that for a normal element operator the closure of the numerical range is actually equal to  the convex hull of the spectrum
\cite[Problem~216]{Halmos:hilbert2}.
\begin{lemma}
  \label{lem:convexhull}
  Let $\alg{A}$ be a finite von Neumann algebra, $\alg{B}\subseteq\alg{A}$ a von
  Neumann subalgebra and $E$ the conditional expectation.
  Then for any normal element $X\in\alg{A}$ the spectrum of $E[X|\alg{B}]$
  is contained in the closed convex hull of the spectrum of $X$.
\end{lemma}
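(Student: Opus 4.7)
The plan is to exploit the positivity and unitality of the conditional expectation together with the standard characterization of the convex hull of the spectrum of a normal operator as its closed numerical range.

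First, I would reduce the claim about the spectrum of $E[X|\alg{B}]$ to a claim about its numerical range. For any bounded Hilbert space operator $T$ one has $\sigma(T)\subseteq\overline{W(T)}$ (this is the easy direction of the Hausdorff--Toeplitz circle of ideas, and is precisely \cite[Problem~214]{Halmos:hilbert2}), so it will suffice to prove
\begin{equation*}
\overline{W(E[X|\alg{B}])}\subseteq \overline{\mathrm{conv}}(\sigma(X)).
\end{equation*}
On the right-hand side, using the hypothesis that $X$ is normal, we have $\overline{W(X)}=\overline{\mathrm{conv}}(\sigma(X))$ by \cite[Problem~216]{Halmos:hilbert2}. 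So the whole task is to show that the numerical range does not grow when passing from $X$ to $E[X|\alg{B}]$.

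The key step is a half-plane argument. Any closed convex subset $C\subseteq\IC$ is the intersection of the closed half-planes containing it, and every such half-plane has the form $H_{\alpha,\beta}=\{z\in\IC : \Re(\alpha z)\geq\beta\}$ with $\alpha\in\IC$ and $\beta\in\R$. For an element $T\in\alg{A}$, acting on the standard Hilbert space of the von Neumann algebra, the condition $\overline{W(T)}\subseteq H_{\alpha,\beta}$ is equivalent to the operator inequality
\begin{equation*}
\tfrac{1}{2}\bigl(\alpha T+\overline{\alpha} T^{*}\bigr)\geq \beta\cdot 1.
\end{equation*}
Fix now a half-plane $H_{\alpha,\beta}$ containing $\overline{\mathrm{conv}}(\sigma(X))$. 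Then by the previous remark $\tfrac{1}{2}(\alpha X+\overline{\alpha}X^{*})\geq\beta\cdot 1$, and because the conditional expectation $E[\,\cdot\,|\alg{B}]$ is unital, positive and $*$-preserving, applying it yields
\begin{equation*}
\tfrac{1}{2}\bigl(\alpha E[X|\alg{B}]+\overline{\alpha}\,E[X|\alg{B}]^{*}\bigr)
=E\bigl[\tfrac{1}{2}(\alpha X+\overline{\alpha}X^{*})\bigm|\alg{B}\bigr]
\geq \beta\cdot 1,
\end{equation*}
i.e.\ $\overline{W(E[X|\alg{B}])}\subseteq H_{\alpha,\beta}$. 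Intersecting over all such half-planes gives the inclusion $\overline{W(E[X|\alg{B}])}\subseteq\overline{\mathrm{conv}}(\sigma(X))$ announced above, and combining with $\sigma(E[X|\alg{B}])\subseteq\overline{W(E[X|\alg{B}])}$ completes the proof.

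The only conceptual subtlety, and the point where one must be careful, is that $E[X|\alg{B}]$ need not be normal even though $X$ is; this is why one has to route the argument through the numerical range rather than directly through the spectrum, and why normality of $X$ enters only through the identification $\overline{W(X)}=\overline{\mathrm{conv}}(\sigma(X))$. Everything else is a routine consequence of positivity, unitality, and the fact that closed convex sets in the plane are cut out by their supporting half-planes.
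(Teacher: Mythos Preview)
Your proof is correct and takes a somewhat different route from the paper's. Both arguments begin with the same reduction: since $\sigma(T)\subseteq\overline{W(T)}$ for any bounded operator $T$, it suffices to show that $W(E[X|\alg{B}])$ lies in the closed convex hull of $\sigma(X)$. From there the paper uses the spectral resolution of the normal element $X$: writing $E[X|\alg{B}]=\int_{\sigma(X)}z\,E[dP(z)\mid\alg{B}]$ and observing that for each unit vector $\xi$ the map $B\mapsto\langle E[P(B)\mid\alg{B}]\xi,\xi\rangle$ is a probability measure on $\sigma(X)$ (positivity and normality of $E$ are used here), one sees directly that every point of $W(E[X|\alg{B}])$ is a barycenter of points in $\sigma(X)$.

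Your argument replaces the spectral integral by a supporting-half-plane argument, translating each containment $\overline{W(X)}\subseteq H_{\alpha,\beta}$ into the operator inequality $\tfrac{1}{2}(\alpha X+\bar{\alpha}X^*)\geq\beta\cdot 1$ and then pushing this inequality through $E$ via positivity and unitality. This is slightly more elementary---no spectral measure is needed---and in fact establishes the stronger inclusion $\overline{W(E[X|\alg{B}])}\subseteq\overline{W(X)}$ for \emph{arbitrary} $X$; normality of $X$ enters only at the very end, through the identification $\overline{W(X)}=\overline{\mathrm{conv}}(\sigma(X))$. The paper's version, by contrast, invokes normality of $X$ up front via the spectral theorem but yields a concrete integral representation of the numerical range points.
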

\begin{proof}
  By the remarks preceding the lemma it 
  suffices to show that the numerical range of $E[X|\alg{B}]$
  is contained in the convex hull of the spectrum of $X$.
  Let $P(z)$ be the spectral resolution of $X$, then we can write
  \begin{align*}
     E[X|\alg{B}] = \int_{\sigma(X)} z E[dP(z)| \alg{B}]
  \end{align*}
  and thus
  \begin{align*}
    \langle E[X|\alg{B}]\xi,\xi\rangle
    &=    \int_{\sigma(X)} z \langle  E[dP(z)| \alg{B}]\xi,\xi\rangle
      .
  \end{align*}
  Now the map  $\mu(B)= \langle E[P(B)|\alg{B}]\xi,\xi\rangle$ is a
  probability measure ($\sigma$-additivity follows from normality of $E$) and
  the claim follows.
\end{proof}
\begin{proposition} 
  Given self-adjoint free random variables $X,Y$ and a bounded Borel function  $f$  on the
  spectrum of $X$, the identity \eqref{eq:mainthm:condexp}
  holds for all $z\in\mathbb{C}\setminus \mathbb{R}$.
\end{proposition}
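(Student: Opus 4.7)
My plan is to extend the identity \eqref{eq:mainthm:condexp} from the neighbourhood of $\infty$ supplied by Proposition~\ref{prop:EzXfXYfX*} to all of $\mathbb{C}^+$ by analytic continuation, and then to $\mathbb{C}^-$ by taking adjoints.

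First I would set up the analytic framework. The operator $A:=X+f(X)Yf^*(X)$ is bounded and selfadjoint, so for every $z\in\mathbb{C}^+$ the resolvent $(z-A)^{-1}$ is a normal element whose spectrum $\{1/(z-a):a\in\sigma(A)\}$ sits in a compact subset of the open lower half-plane that is bounded away from $0$ (because $\sigma(A)$ is compact). By Corollary~\ref{cor:kaplansky}, $T(z):=\E_\varphi[(z-A)^{-1}|\alg{B}]$ lies in $W^*(X)$, and by Lemma~\ref{lem:convexhull} its spectrum is contained in the closed convex hull of $\sigma((z-A)^{-1})$, which remains bounded away from $0$. Consequently $T(z)$ is invertible for every $z\in\mathbb{C}^+$, and $z\mapsto T(z)^{-1}$ is an analytic map into $W^*(X)$.

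Next I would extract the candidate subordination function globally. Set $u(z):=z-X-T(z)^{-1}\in W^*(X)$, an analytic $W^*(X)$-valued function on $\mathbb{C}^+$. On the neighbourhood $U$ of $\infty$ furnished by Proposition~\ref{prop:EzXfXYfX*} one has $u(z)=\delta(z)f(X)f^*(X)$ with $\delta$ given by the convergent series \eqref{eq:deltaseries}. Since by hypothesis $f$ is not identically zero on $\sigma(X)$, the constant $c:=\varphi(f(X)f^*(X))$ is strictly positive by faithfulness of $\varphi$, so the formula
\[
\tilde\delta(z):=\frac{\varphi(u(z))}{c}
\]
defines a scalar-valued analytic function on $\mathbb{C}^+$ which agrees with $\delta$ on $U$ and hence provides the sought analytic continuation of $\delta$ to the whole upper half-plane.

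Finally, to upgrade the operator identity from $U$ to all of $\mathbb{C}^+$, I would test it against a separating family of functionals. For any $\phi\in\alg{B}_*$ the maps $z\mapsto\phi(u(z))$ and $z\mapsto \tilde\delta(z)\phi(f(X)f^*(X))$ are scalar holomorphic on the connected domain $\mathbb{C}^+$ and coincide on $U$, so they coincide throughout by the identity theorem. As $\alg{B}_*$ separates points this forces $u(z)=\tilde\delta(z)f(X)f^*(X)$ in $\alg{B}$ for every $z\in\mathbb{C}^+$, which rearranges to the desired resolvent identity. The case $z\in\mathbb{C}^-$ then follows by taking adjoints (using selfadjointness of $A$ and of $f(X)f^*(X)$, and the $*$-preserving property of $\E_\varphi[\,\cdot\,|\alg{B}]$), giving $\delta(\bar z)=\overline{\delta(z)}$ on the reflected domain. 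The only genuine obstacle in the plan is the global invertibility of $T(z)$ on $\mathbb{C}^+$; once Lemma~\ref{lem:convexhull} is used to pin the spectrum of $T(z)$ into a compact subset of the closed lower half-plane bounded away from the origin, the rest is a soft analytic continuation argument together with the weak$^*$ duality between $\alg{B}$ and $\alg{B}_*$.
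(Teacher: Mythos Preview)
Your proposal is correct and follows essentially the same route as the paper: both use Lemma~\ref{lem:convexhull} to show that the conditional expectation $T(z)$ of the resolvent is invertible throughout $\mathbb{C}^+$, then extend the identity from the neighbourhood of infinity by analytic continuation, and finally pass to $\mathbb{C}^-$ by conjugation. The only cosmetic difference is that you extract the scalar $\delta(z)$ via the trace $\varphi$ and test the operator identity against $\alg{B}_*$, whereas the paper picks a single unit vector $\xi$ with $f(X)^*\xi\neq0$ and argues with the operator-valued function $\rho(z)=T(z)^{-1}+X+\delta(z)f(X)f^*(X)$ directly; the paper's vector-state formula has the small bonus of making $\Im\delta(z)<0$ immediate, but that is not part of the present proposition.
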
 
\begin{proof}
    First observe that the resolvent $\left(z-X-f(X)Yf^*(X)\right)^{-1}$ is
  an   analytic function of $z$ in the domain $\mathbb{C}\setminus\mathbb{R}$
  and has an absolutely convergent series expansion in every point,
  see, e.g., \cite{TaylorLay:1980}   or \cite{DunfordSchwartz:1958:I}.
  Applying the contractive map  $\E\left[\cdot| \alg{B}\right]$ to the series
  expansion  decreases the norm of the operator coefficients and  thus
  the function
  $h(z)=\E\left[\left.\left(z-X-f(X)Yf^*(X)\right)^{-1}\right|\alg{B}\right]$ is 
  analytic in $z$  for $z\in\mathbb{C}\setminus\mathbb{R}$ as well.
  Corollary~\ref{cor:kaplansky} we infer that $h(z)$ is a normal
  operator commuting with $X$.
  Moreover  positivity of the conditional expectation implies
  that $h(z)^*=h(\overline{z})$ and we restrict the further discussion
  to the upper half plane.

  For $z\in\IC^+$ 
  the resolvent $\left(z-X-f(X)Yf^*(X)\right)^{-1}$ is a bounded normal
  operator and its spectrum is a compact subset of the open disk with
  diameter $(0,-\tfrac{i}{\Im(z)})$, i.e., the disk of radius
  $\frac{1}{2\Im z}$ centred at $-\frac{i}{2\Im z}$. 
  Now by Lemma~\ref{lem:convexhull} for any $Z$ the spectrum of
  $\E_\varphi[Z | \mathcal{B}]$ is contained in the convex hull of the spectrum
  of $Z$
  and consequently the spectrum of $h(z)$ is a compact subset
  of the disk with diameter $(0,-\tfrac{i}{\Im(z)})$
  bounded away from 0.
  It follows that $h(z)$ is invertible
  and its spectrum is contained in the half plane  above the line $y=\Im(z)$.
  Since it is normal, 
  we infer from the remarks preceding Lemma~\ref{lem:convexhull}
  that its numerical range is contained therein as well and thus
  \begin{equation}
    \label{eq:Imhz>Imz}
  \Im \langle h(z)^{-1}\xi,\xi\rangle > \Im z
  \end{equation}
  for any $\xi$ with $\norm{\xi}=1$.
  Now for $z$ large enough this inverse is $h^{-1}(z)=z-X-\delta(z)f(X)f^*(X)$ 
  and thus for any vector $\xi$ with $\norm{\xi}=1$ and $f(X)^*\xi\ne0$
  we have for large $z$
  \[
    \delta(z)=\frac{z\langle \xi,\xi\rangle - \langle X\xi,\xi\rangle -\langle
      h^{-1}(z)\xi,\xi\rangle }    {\norm{f^*(X)\xi}^2}
  \]
  From this it follows  that $\delta(z)$ has an analytic extenstion to
  $\IC^+$ and together with   \eqref{eq:Imhz>Imz}  
  we conclude that $\Im\delta(z)<0$ throughout.
  Define
  \begin{equation*}
    \rho(z) = h(z)^{-1} + X +\delta(z)f(X)f^*(X).
  \end{equation*}
  By Proposition~\ref{prop:EzXfXYfX*}
  for $\abs{z}$ large enough we have $\rho(z)=z I$ and since $\rho(z)$
  depends analytically on $z$, it follows by analytic continuation   that
  $\rho(z)=z I$ in the whole upper half plane.
\end{proof}
\begin{proposition}
  The function $\delta(z)$ grows sublinearly in any nontangential sector,
  i.e., 
  $$
  \nontang{\abs{z}\to\infty} \frac{\delta(z)}{z}= 0
  $$ 
\end{proposition}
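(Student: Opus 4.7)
The plan is to exploit the explicit power series representation \eqref{eq:deltaseries} obtained in the proof of Proposition~\ref{prop:EzXfXYfX*},
\[
  \delta(z)
  = \sum_{n=0}^{\infty}\beta_{2n+1}\bigl(Y,\,f^*(X)(z-X)^{-1}f(X),\,Y,\ldots,f^*(X)(z-X)^{-1}f(X),\,Y\bigr),
\]
which is valid in a neighbourhood of infinity in $\IC^{+}$ (say for $|z|>R_0$ where $R_0$ depends only on $\|X\|$, $\|Y\|$ and $\|f(X)\|$). I will show that $\delta$ is in fact \emph{bounded} on this neighbourhood, which is strictly stronger than the sublinear nontangential growth that is claimed.

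First I would invoke the elementary operator bound $|\beta_k(a_1,\ldots,a_k)|\leq 2^{k-1}\|a_1\|\cdots\|a_k\|$, which follows from the interval-partition expansion $\beta_k=\sum_{\pi\in\IP(k)}(-1)^{|\pi|+1}\varphi_\pi$ together with $|\IP(k)|=2^{k-1}$, and combine it with the standard resolvent estimate $\|(z-X)^{-1}\|\leq (|z|-\|X\|)^{-1}$ that holds once $|z|>\|X\|$. This yields, for each $n\geq 1$,
\[
  \bigl|\beta_{2n+1}\bigl(Y, f^*(X)(z-X)^{-1}f(X),\ldots,Y\bigr)\bigr|
  \leq 2^{2n}\,\|Y\|^{n+1}\,\|f(X)\|^{2n}\,(|z|-\|X\|)^{-n} = O\!\bigl(|z|^{-n}\bigr),
\]
while the $n=0$ term is the constant $\beta_{1}(Y)=\varphi(Y)$.

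Summing the resulting geometric majorant gives a bound of the shape $|\delta(z)|\leq |\varphi(Y)|+C/|z|$ for $|z|$ large, so $\delta$ is bounded at infinity. Dividing by $z$ immediately produces
\[
  \lim_{|z|\to\infty}\frac{\delta(z)}{z}=0
\]
unrestrictedly in $\IC^{+}$, and in particular
$
  \nontang{z\to\infty}\,\delta(z)/z=0,
$
which is the claim. There is no real obstacle here; the only point to verify carefully is that the radius of convergence of the series for $\delta$ really does cover a full exterior $\{|z|>R_0\}$, which is ensured by the boundedness of $f(X)$ and $Y$ and has already been used implicitly in Proposition~\ref{prop:EzXfXYfX*}.
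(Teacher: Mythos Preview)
Your argument is correct and follows essentially the same approach as the paper: both use the series expansion \eqref{eq:deltaseries} and observe that the $n=0$ term equals $\varphi(Y)$ while the remaining terms are $O(1/\abs{z})$, so that $\delta$ is bounded near infinity. Your version simply spells out the Boolean cumulant estimate and the resolvent bound that justify the $O(1/\abs{z})$ claim, whereas the paper states the conclusion $\delta(z)=\varphi(Y)+\mathcal{O}(1/z)$ directly.
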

\begin{proof}
  We have seen in equation \eqref{eq:deltaseries} that 
  \begin{align*}
    \delta(z)&=\eta_{f^*(X)(z-X)^{-1}f(X)}^Y(1)\\
    &=\sum_{n=0}^{\infty}\beta_{2n+1}(Y,f^*(X)(z-X)^{-1}f(X),Y,\ldots,f^*(X)(z-X)^{-1}f(X),Y)\\
    &= \varphi(z) + \alg{O}(1/z).
  \end{align*}
\end{proof}
\begin{proposition}
    Assume that $f$ is not constantly zero on the spectrum of $X$. 
  Then the function $\delta(z)$ is uniquely determined by the properties
  \eqref{it:mainthm:delta} and   \eqref{it:mainthm:deltaequation1}
  of Theorem \ref{thm:41} for $\abs{z}$ large enough.
\end{proposition}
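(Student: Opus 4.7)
The plan is to combine the growth condition (ii) with the functional equation (iii) to extract the asymptotic $\delta(z)\to\varphi(Y)$ as $z\to\infty$ nontangentially, and then to use injectivity of the map $w\mapsto\mgfpsi_T(w)$ on a fixed neighborhood of $\varphi(Y)$ to pin $\delta$ down. Throughout, write $T=T(z)=f(X)(z-X)^{-1}f(X)^*\in\alg{B}$, so that $\|T\|=O(1/|z|)$ nontangentially. Since $f$ is not identically zero on the spectrum of $X$, $\varphi(T)=\varphi(|f(X)|^2)/z+O(1/|z|^2)$; in particular $\varphi(T)\neq 0$ for $|z|$ large.

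First I would establish $\delta(z)\to\varphi(Y)$. For any $w$ with $\|T\||w|<1$, the geometric tail estimate
\[
\Bigabs{\mgfpsi_T(w) - \varphi(T)\,w} = \Bigabs{\sum_{n\geq 2}\varphi(T^n)\,w^n} \leq \frac{(\|T\||w|)^2}{1-\|T\||w|}
\]
applied to $w=\delta(z)$ with $\delta(z)/z\to 0$ yields $\mgfpsi_T(\delta(z))=\varphi(T)\,\delta(z)\,(1+o(1))$. An analogous estimate on the left-hand side of \eqref{eq:mainthm:deltaequation1}, using freeness of $Y$ from $\alg{B}\ni T$ to reduce $\varphi(TY)=\varphi(T)\varphi(Y)$, gives $\mgfpsi_{TY}(1)=\varphi(T)\varphi(Y)(1+O(1/|z|))$. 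Substituting into (iii) and dividing by $\varphi(T)$ produces $\delta(z)(1+o(1))=\varphi(Y)+O(1/|z|)$; the right-hand side is bounded, hence $\delta(z)$ is bounded in every nontangential sector for $|z|$ large, and then the $o(1)$ term actually tends to $0$ and gives $\delta(z)\to\varphi(Y)$. Next, on a fixed disk $|w|\leq M$ with, say, $M=|\varphi(Y)|+1$ and $|z|$ large enough that $M\|T\|<\tfrac{1}{2}$, one writes
\[
\mgfpsi_T(w_1) - \mgfpsi_T(w_2) = (w_1-w_2)\Biggl[\varphi(T) + \sum_{n\geq 2}\varphi(T^n)\sum_{k=0}^{n-1}w_1^k\, w_2^{n-1-k}\Biggr],
\]
and the bracket, which differs from $\varphi(T)$ by $O(\|T\|^2)$ while $|\varphi(T)|\gtrsim 1/|z|$, is nonvanishing; thus $\mgfpsi_T$ is injective on this disk.

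Now if $\delta_1,\delta_2$ both satisfy (ii) and (iii), the asymptotic just proved places both in $|w-\varphi(Y)|\leq 1$ for $|z|$ large in some nontangential sector; the equality $\mgfpsi_T(\delta_1(z))=\mgfpsi_T(\delta_2(z))$ from (iii) combined with injectivity then forces $\delta_1(z)=\delta_2(z)$ on this open region, and the identity theorem applied to these analytic functions on $\IC^+$ concludes $\delta_1\equiv\delta_2$. The delicate point is the passage from (ii) to a genuine limit $\delta(z)\to\varphi(Y)$: the sublinear growth hypothesis is only just strong enough to make the tail of $\mgfpsi_T(\delta)$ negligible against the linear term $\varphi(T)\,\delta$, and one must bootstrap from the functional equation itself to obtain boundedness of $\delta$ before the remaining implicit-function-type argument can be applied.
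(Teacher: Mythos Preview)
Your proof is correct and takes a genuinely different route from the paper's.

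The paper invokes Haagerup's injectivity result \cite[Proposition~3.2]{Haagerup97}, which says that for any $T$ with $\varphi(T)\neq 0$ the map $\mgfpsi_T$ is injective on the disk of radius $\tfrac{|\varphi(T)|}{4\|T\|^2}$. It then verifies, via direct estimates on $\varphi(F(z-X)^{-1}F^*)$ and $\|F(z-X)^{-1}F^*\|$, that this radius grows like $c|z|$, so that the sublinear growth hypothesis $\delta(z)=o(|z|)$ from \eqref{it:mainthm:delta} alone suffices to place $\delta(z)$ in the injectivity disk. No bootstrap is needed: sublinearity is matched directly against a linearly growing disk.

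You instead trade the external reference for a bootstrap. You first extract from the functional equation and the sublinear growth the sharper conclusion $\delta(z)\to\varphi(Y)$ --- this is additional information the paper does not derive from \eqref{it:mainthm:delta} and \eqref{it:mainthm:deltaequation1} abstractly, only from the explicit series~\eqref{eq:deltaseries}. With $\delta$ now known to stay in a \emph{fixed} disk, you give an elementary injectivity argument on that disk by factoring $\mgfpsi_T(w_1)-\mgfpsi_T(w_2)$ and observing the bracket is $\varphi(T)+O(\|T\|^2)$, nonzero for large $|z|$. This is essentially Haagerup's argument unpacked, but on a bounded region rather than one scaling with~$z$.

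Both arguments are confined to nontangential sectors by the hypothesis~\eqref{it:mainthm:delta}; your final appeal to the identity theorem to propagate to all of $\IC^+$ is a nice touch, though strictly more than the proposition (``for $|z|$ large enough'') demands.
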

\begin{proof}
  From \eqref{eq:deltaseries} and Corollary~\ref{cor:multsubord} it follows 
  that in some neighbourhood of infinity $\delta(z)$ satisfies the equation
  \begin{equation}
    \label{eq:deltaequation}
    \mgfpsi_{f(X)(z-X)^{-1}f^*(X)Y}(1)=\mgfpsi_{f(X)(z-X)^{-1}f^*(X)}(\delta(z)).
  \end{equation}
  By analytic continuation we conclude that this equation holds for $z\in\C\setminus\R$. We will show that for $|z|$ large enough the above equation determines $\delta(z)$ uniquely.
  From \cite[Proposition 3.2]{Haagerup97} 
  we infer that for any $T$ (not necessarily self-adjoint) 
  with expectation $\varphi(T)\neq 0$
  the moment generating function $\mgfpsi_T$ is injective on the open disk
  \begin{align}\label{eq:deltaDom}
    \Bigl\{z:\abs{z}<\tfrac{\abs{\varphi(T)}}{4\,\norm{T}^2}\Bigr\}
  \end{align} 
  and the image contains the open disk
  \begin{align}\label{eq:deltaCodom}
    \Bigl\{z: \abs{z} < \tfrac{\abs{\varphi(T)}^2}{6\,\norm{T}^2}\Bigr\}.
  \end{align}
  We  will use this result for the operator $T=F(z-X)^{-1}F^*$,
  where here and   in the following we abbreviate $F=f(X)$,
  and show that equation    \eqref{eq:deltaequation}
  uniquely determines $\delta(z)$  for large $z$.
  In order to do so we shall  prove that the lower bounds
  \begin{equation}
    \label{eq:phiFzXFlowerbounds}
  \frac{\abs{\varphi(F(z-X)^{-1}F^*)}^2}{\norm{F(z-X)^{-1}F^*}^2}
  \geq \frac{\varphi(FF^*)^2}{4\norm{F}^4}
  \quad
  \text{ and }
  \quad
  \frac{\abs{\varphi(F(z-X)^{-1}F^*)}}{\abs{z}\,\norm{F(z-X)^{-1}F^*}^2}
  \geq \frac{\varphi(FF^*)}{4\norm{F}^4}
  \end{equation}
  hold uniformly for 
  $$
  \abs{z}> 2\norm{X}
  \Bigl(
    1+\frac{\norm{F}^2}{\varphi(FF^*)}
  \Bigr)
  .
  $$
    To conclude, we argue that
  \begin{enumerate}
   \item the values   $\mgfpsi_{f(X)(z-X)^{-1}f^*(X)Y}(1)$ are well defined and converge to $0$ as $\abs{z}$ 
    goes to infinity and therefore they are contained in the ball \eqref{eq:deltaCodom} for large $\abs{z}$.
   \item
    The quotient that $\delta(z)/z\to 0$ as $z\to\infty$ and therefore
    the values $\delta(z)$ are contained in the ball \eqref{eq:deltaDom} for large $\abs{z}$.
  \end{enumerate}

  Let us first estimate the quotient $\frac{\abs{\varphi(T)}}{\norm{T}}$ from below.
  To this end we expand the numerator for $|z|>\norm{X}$ into a Neumann series
  \begin{align*}
    \abs{
      \varphi\bigl(
        (z-X)^{-1}FF^*
         \bigr)
     }
   &=\frac{1}{\abs{z}}
     \,\abs{\varphi\left(
         \left(
           1-z^{-1}X
         \right)^{-1}FF^*
       \right)
     }
   \\
    &=
    \frac{1}{\abs{z}}
    \biggabs{
      \sum_{n=0}^{\infty}
       \frac{1}{z^n}
       \varphi\left(X^nFF^*\right)
    }\\
    &\geq \frac{1}{\abs{z}}
          \biggl(
            \varphi(FF^*)
            -\sum_{n=1}^{\infty}\frac{1}{\abs{z}^n}\norm{X}^n\norm{F}^2
          \biggr)
   \\ 
   &= \frac{1}{\abs{z}}
      \left(
        \varphi(FF^*)
        -\frac{\norm{X}\norm{F}^2}{\abs{z}-\norm{X}}
      \right).
    \end{align*}
    On the other hand for $\abs{z}>\norm{X}$ we have
    \begin{equation}
      \label{eq:normz-X1FF}
      \norm{(z-X)^{-1}FF^*}
      = \frac{1}{\abs{z}}
        \norm{
          \left(
            1-z^{-1}X
          \right)^{-1}FF^*}
     \leq \frac{1}{\abs{z}}
          \sum_{n=0}^{\infty}\frac{\norm{X}^n}{\abs{z}^n}\norm{F}^2
     =\frac{\norm{F}^2}{\abs{z}-\norm{X}}.
   \end{equation}
   Thus for $\abs{z}>\norm{X}$ we can estimate
   \begin{align*}
          \frac{
       \bigabs{
         \varphi\bigl((z-X)^{-1}FF^*\bigr)
       }}     {\norm{(z-X)^{-1}FF^*}}
     \geq \frac{1}{\abs{z}}
          \left(
            \frac{
            \varphi(FF^*)
            -\frac{\norm{X}\,\norm{F}^2}{\abs{z}-\norm{X}}}                 {\frac{\norm{F}^2}{\abs{z}-\norm{X}}}
          \right)
     =\frac{1}{\norm{F}^2}
      \left(
        \varphi(FF^*)
        -\frac{\norm{X}(\varphi(FF^*)+\norm{F}^2)}{\abs{z}}
      \right)
    \end{align*}
    and consequently for
    $\abs{z}>2 \norm{X}(\varphi(FF^*)+\norm{F}^2)\,\varphi(FF^*)^{-1}$ 
    we have the lower bound
    \begin{equation*}
      \frac{\abs{\varphi\bigl(\left(z-X\right)^{-1}FF^*\bigr)}}           {\norm{(z-X)^{-1}FF^*}}
           \geq\frac{\varphi(FF^*)}{2\norm{F}^2}.
    \end{equation*}

    The first bound in \eqref{eq:phiFzXFlowerbounds} is an immediate consequence of this estimate.
    On the other hand, for $\abs{z}>2 \norm{X}(\varphi(FF^*)+\norm{F}^2)\,\varphi(FF^*)^{-1}$ 
    we can use \eqref{eq:normz-X1FF} to estimate
    \begin{align*}
      \abs{z}\,      \norm{(z-X)^{-1}FF^*}
      &\leq \frac{\norm{F}^2}{1-\frac{\norm{X}}{\abs{z}}}\\
      &\leq \frac{\norm{F}^2}{1-\frac{\varphi(FF^*)}{2(\varphi(FF^*)+\norm{F}^2)}}\\
      &= \frac{2\norm{F}^2(\varphi(FF^*)+\norm{F}^2)}{\varphi(FF^*)+2\norm{F}^2)}\\
      &\leq 2\norm{F}^2
    \end{align*}
    and the second bound in \eqref{eq:phiFzXFlowerbounds} follows.

\end{proof}

\begin{proposition}
  The function $\delta(z)$ is uniquely determined by properties
  \eqref{it:mainthm:delta} and   \eqref{it:mainthm:deltaequation2}
  of Theorem \ref{thm:41}.
\end{proposition}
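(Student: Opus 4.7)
The plan is to reduce the statement to the preceding proposition by showing that, given the growth condition (ii), the fixed-point equation \eqref{eq:mainthm:deltaequation2} is equivalent to \eqref{eq:mainthm:deltaequation1}. Once this equivalence is in hand, uniqueness under (ii) and (iv) will follow immediately from the uniqueness under (ii) and (iii) just established.

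Writing $A=A(z):=f(X)(z-X)^{-1}f^*(X)$ for brevity, I would first rewrite \eqref{eq:mainthm:deltaequation2} in a symmetric form: using $\tilde{\eta}_T(u)=\eta_T(u)/u$ and $\eta_A(\delta)=\delta\,\tilde{\eta}_A(\delta)$, multiplying both sides by $\tilde{\eta}_A(\delta(z))$ yields the equivalent identity
\begin{equation*}
  \eta_Y\bigl(\tilde{\eta}_A(\delta(z))\bigr)=\eta_A(\delta(z)),
\end{equation*}
and similarly the M\"obius relation $\eta_T=\psi_T/(1+\psi_T)$ shows that \eqref{eq:mainthm:deltaequation1} is equivalent to $\eta_{AY}(1)=\eta_A(\delta(z))$. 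Next, since $A\in\alg{B}$ and $Y$ is free from $\alg{B}$, $A$ and $Y$ are free, so the multiplicativity \eqref{eq:sigmamuboxtimesnu} of the $\Sigma$-transform unfolds to $w\,\eta_{AY}^{-1}(w)=\eta_A^{-1}(w)\,\eta_Y^{-1}(w)$. Substituting $w=\eta_A(\delta(z))$ and using $\eta_A^{-1}(\eta_A(\delta))=\delta$ gives
\begin{equation*}
  \eta_A(\delta(z))\,\eta_{AY}^{-1}\bigl(\eta_A(\delta(z))\bigr)=\delta(z)\,\eta_Y^{-1}\bigl(\eta_A(\delta(z))\bigr),
\end{equation*}
from which the chain of equivalences $\eta_{AY}(1)=\eta_A(\delta(z))\Leftrightarrow\eta_Y^{-1}(\eta_A(\delta(z)))=\tilde{\eta}_A(\delta(z))\Leftrightarrow\eta_Y(\tilde{\eta}_A(\delta(z)))=\eta_A(\delta(z))$ can be read off, completing the equivalence.

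The main technical point will be to verify that every inverse generating function lives on an appropriate domain. The inversions of $\eta_A$ and $\eta_{AY}$ near $0$ are controlled exactly as in the proof of the preceding proposition, using $\varphi(A(z))\neq 0$ for $\abs{z}$ large together with Haagerup's injectivity estimate. The inversion of $\eta_Y$ near $0$ is valid provided $\varphi(Y)\neq 0$; the degenerate case $\varphi(Y)=0$ may be handled either by working with formal power series (all of which converge in a neighbourhood of the relevant points when $\abs{z}$ is large) or by observing that \eqref{eq:mainthm:deltaequation2} defines a contraction for large $\abs{z}$ because $\tilde{\eta}_A(\delta)=\Ord(1/z)$. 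Either way, local uniqueness of $\delta$ at infinity propagates to all of $\C^+$ by analytic continuation using the sublinear growth condition (ii).
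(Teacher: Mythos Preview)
Your proposal is correct and follows essentially the same route as the paper: both arguments translate \eqref{eq:mainthm:deltaequation1} into the $\eta$-form $\eta_{AY}(1)=\eta_A(\delta(z))$, invoke the multiplicativity $w\,\eta_{AY}^{-1}(w)=\eta_A^{-1}(w)\,\eta_Y^{-1}(w)$ from \eqref{eq:sigmamuboxtimesnu}, and unwind to obtain the fixed-point form \eqref{eq:mainthm:deltaequation2}, then appeal to the preceding proposition for uniqueness. Your extra remarks on the domains of the compositional inverses and the degenerate case $\varphi(Y)=0$ make explicit what the paper dismisses as ``easy to see''.
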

\begin{proof}
  We have
  \begin{equation*}
    \mgfpsi_{f(X)(z-X)^{-1}f^*(X)Y}(1)=\mgfpsi_{f(X)(z-X)^{-1}f^*(X)}(\delta(z)).
  \end{equation*}
  Since $\eta_U(z)=\mgfpsi_U(z)/(1+\mgfpsi_U(z))$ we have equivalently
  \begin{equation*}
    \eta_{f(X)(z-X)^{-1}f^*(X)Y}(1)=\eta_{f(X)(z-X)^{-1}f^*(X)}(\delta(z)).
  \end{equation*}
  Thus one gets
  \begin{equation*}
    1=\eta^{\langle-1\rangle}_{f(X)(z-X)^{-1}f^*(X)Y}\left(\eta_{f(X)(z-X)^{-1}f^*(X)}(\delta(z))\right).
  \end{equation*}
  Recall from   \eqref{eq:sigmamuboxtimesnu} that for $U,V$ free 
  we have
  $\eta^{\langle-1\rangle}_{UV}=\tfrac{\eta^{\langle-1\rangle}_{U}\eta^{\langle-1\rangle}_{V}}{z}$ and thus
  \begin{equation*}
    \eta_{f(X)(z-X)^{-1}f^*(X)}(\delta(z))=\delta(z)\eta^{\langle-1\rangle}_{Y}\left(\eta_{f(X)(z-X)^{-1}f^*(X)}(\delta(z))\right).
  \end{equation*}
  So finally we obtain the desired fixed point equation
  \begin{equation*}
    \frac{\eta_{Y}\left(\frac{\eta_{f(X)(z-X)^{-1}f^*(X)}(\delta(z))}{\delta(z)}\right)}{\frac{\eta_{f(X)(z-X)^{-1}f^*(X)}(\delta(z))}{\delta(z)}}=\delta(z)
    .
  \end{equation*}
  It is easy to see that $\delta$ satisfies
  \eqref{eq:mainthm:deltaequation1}      
  of Theorem if and only if it satisfies
  \eqref{eq:mainthm:deltaequation2} and this finishes the proof.
\end{proof}
\section{The distribution of  $X+f(X)Yf^*(X)$ and its relation to free convolutions}
\label{sec:examples}

\subsection{An algorithm}
The results of the previous section yields the following
effective method of calculating explicitly the distribution of $X+f(X)Yf^*(X)$
for any Borel function $f$.
\begin{theorem}\label{Cor:CantConv}
Let $X$ and $Y$ be free selfadjoint random variables. 
Then the following procedure yields the Cauchy transform of  $X+f(X)Yf^*(X)$.
\begin{enumerate}[1.]
 \item
  Calculate the moment generating function of $f(X)(z-X)^{-1}f(X)^*$
  \begin{equation*}
    \mgfpsi_{f(X)(z-X)^{-1}f(X)^*}(s) = \int \frac{sx}{1-\frac{s\,\abs{f(x)}^2}{z-x}}\,d\mu(x)
  \end{equation*}
 \item 
  Calculate the shifted Boolean cumulant generating functions
  $\widetilde{\eta}_Y(s)$ and $\widetilde{\eta}_{f(X)(z-X)^{-1}f^*(X)}(s)$ according to 
  \eqref{eq:scgf}.
 \item 
  Solve the fixed point equation \eqref{eq:mainthm:deltaequation2} or the
  equivalent equation
  \begin{equation}
     \label{eq:algorithm:deltaequation2}
   \widetilde{\eta}_{f(X)(z-X)^{-1}f(X)^*}(\delta(z))=  \widetilde{\eta}_Y^{-1}(\delta(z)).
  \end{equation}
 \item 
  Evaluate the integral
  \begin{equation}\label{eq:CauchyTrCantConv}
    G_{X+f(X)Yf^*(X)}(z)=\int_{\mathbb{R}}\frac{1}{z-x-\delta(z)\abs{f(x)}^2}d\mu(x),
  \end{equation}
  where $\delta(z)$ is determined as it is described in Theorem \ref{thm:41}.
\end{enumerate}
\end{theorem}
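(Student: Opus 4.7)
The proof is essentially a matter of assembling pieces already available from Theorem \ref{thm:41} together with the spectral theorem for the self-adjoint operator $X$, so the plan is primarily organizational rather than combinatorial.

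First, I would verify step 1 using functional calculus: since $X$ is self-adjoint with spectral distribution $\mu$, the operator $T(z) := f(X)(z-X)^{-1}f^*(X) = |f(X)|^2(z-X)^{-1}$ is a bounded Borel function of $X$ for $z \notin \sigma(X)$, hence lies in the abelian von Neumann algebra generated by $X$, and $\varphi \circ \text{functional calculus}$ reduces to integration against $\mu$. Writing $\mgfpsi_{T(z)}(s) = \varphi\bigl(sT(z)(1-sT(z))^{-1}\bigr)$ and pushing through the spectral theorem yields the stated integral formula (modulo the standard rewriting of the integrand). Step 2 is then merely the definition: combine $\eta_U(s) = \mgfpsi_U(s)/(1+\mgfpsi_U(s))$ with $\widetilde{\eta}_U(s) = \eta_U(s)/s$, applied once to $Y$ and once to $T(z)$.

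For step 3 I would invoke Theorem \ref{thm:41}\eqref{it:mainthm:deltaequation2}, which already provides the fixed point equation
\begin{equation*}
  \widetilde{\eta}_Y\bigl(\widetilde{\eta}_{f(X)(z-X)^{-1}f^*(X)}(\delta(z))\bigr)=\delta(z).
\end{equation*}
Applying $\widetilde{\eta}_Y^{\langle-1\rangle}$ to both sides (which is legitimate in a neighbourhood of $0$ because $\widetilde{\eta}_Y(0) = \beta_1(Y) = \varphi(Y)$ and $\widetilde{\eta}_Y$ has nonvanishing derivative at $0$ in general; more precisely one works in the regime where Theorem \ref{thm:41} applies) immediately yields the equivalent form \eqref{eq:algorithm:deltaequation2}. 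Combined with part \eqref{it:mainthm:delta} of Theorem \ref{thm:41}, the sublinear growth condition pins down $\delta(z)$ uniquely.

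Finally, for step 4, I would apply the state $\varphi$ to both sides of \eqref{eq:mainthm:condexp} of Theorem \ref{thm:41}. On the left one obtains $\varphi\bigl((z-X-f(X)Yf^*(X))^{-1}\bigr) = G_{X+f(X)Yf^*(X)}(z)$ by the trace property and the definition of the Cauchy transform. On the right, the resolvent $(z - X - \delta(z)f(X)f^*(X))^{-1}$ lies in the abelian subalgebra generated by $X$, since $\delta(z)$ is a scalar and $f(X)f^*(X) = |f(X)|^2$ is itself a Borel function of $X$. Another application of the spectral theorem then rewrites $\varphi$ of this resolvent as the integral $\int_{\mathbb{R}} (z - x - \delta(z)|f(x)|^2)^{-1}\,d\mu(x)$, which is precisely \eqref{eq:CauchyTrCantConv}.

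The only genuinely subtle point, and what I expect to be the main obstacle in writing this up carefully, is not any single computation but the analytic bookkeeping: one must check that the domain of $z$ on which all four steps are simultaneously valid is nonempty (and in fact a neighbourhood of infinity or, more strongly, all of $\mathbb{C}^+$), that the solution to the fixed point equation produced in step 3 is the one with the correct nontangential behaviour at infinity, and that all the generating functions converge in the relevant region. Once this is handled, the remaining content is a recombination of already established facts.
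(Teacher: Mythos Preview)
Your proposal is correct and follows the same approach as the paper: the steps of the algorithm are extracted directly from Theorem~\ref{thm:41}, and the integral formula \eqref{eq:CauchyTrCantConv} is obtained by applying $\varphi$ to both sides of \eqref{eq:mainthm:condexp} and invoking the spectral theorem for $X$. Your write-up is in fact more detailed than the paper's own (two-sentence) proof, and your remarks on the analytic bookkeeping are apt but already absorbed into the statement and proof of Theorem~\ref{thm:41}.
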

\begin{proof}
  The steps of the algorithm can be extracted directly from Theorem~\ref{thm:41}.
  The final equation follows by applying $\varphi$ to both sides of \eqref{it:mainthm:condexp}.
\end{proof}

In the remainder of this section we discuss the  relation of our procedure to previously
known methods of calculating free convolutions: In the case
$f\equiv 1$ one recovers additive convolution and for positive $X$ and
$f(x)=\sqrt{x}$ the operation which we consider is equivalent to
the multiplicative free convolution of $X$ and $Y+1$.
We show that in the additive case the fixed point equation from
Theorem~\ref{thm:41} is equivalent to the fixed point equations found in
\cite{BelBerNewApproach}. We finish this section with examples of explicit
calculations using our method. 
In the examples computed below and in Section~\ref{sec:algebraic} it will turn
out that in practice equation
\eqref{eq:algorithm:deltaequation2} is more accessible than
equation \eqref{eq:mainthm:deltaequation2}.
     
\subsection{Fixed point equations for additive free convolution}
We return to the question of subordination for additive free convolution
and show how it can be derived as a special case.
In addition we show how the fixed point equation \cite[Theorem~4.1]{BelBerNewApproach} can be derived
as a consequence of the fixed point equation \eqref{eq:mainthm:deltaequation2}.

\begin{example}\label{ex:addConv}
  Given free random variables $X$ and $Y$
  we observed in Corollary~\ref{cor:additivesubordination}
  that
  \[G_{X+Y}(z)=G_X\left(z-\eta^{Y}_{(z-X)^{-1}}(1)\right),\]
  where
  \begin{align*}
    \eta^{Y}_{(z-X)^{-1}}(s)=\sum_{n=0}^{\infty}\beta_{2n+1}(Y,(z-X)^{-1},Y,\ldots,(z-X)^{-1},Y)s^{2n+1}.
  \end{align*}
  This implies that one can find the free additive
  convolution of $X$ and $Y$, by calculating the free multiplicative
  convolution of $Y$ and $(z-X)^{-1}$ for $z$ in some neighbourhood of
  infinity. More precisely one can find the function
  $\eta^{Y}_{(z-X)^{-1}}(1)$ as a subordination function of free
  multiplicative convolution of $Y$ and $(z-X)^{-1}$.
  
  Next we will use equation
  \eqref{eq:mainthm:deltaequation2}      
  from Theorem~\ref{thm:41} with $f\equiv 1$ to derive a known fixed point equation for subordination function of free additive convolution.
  Then $z-\delta(z)$ satisfies the fixed point equation of the subordination function.
    First we note that
    \begin{equation*}
      \widetilde{\eta}_{(z-X)^{-1}}(\delta(z))
      =\frac{\mgfpsi_{(z-X)^{-1}}(\delta(z))}{\delta(z)\left(1+\mgfpsi_{(z-X)^{-1}}(\delta(z))\right)}
    \end{equation*}
    and
    \begin{equation*}
      \mgfpsi_{(z-X)^{-1}}(s)=\int_{\mathbb{R}}\frac{s(z-x)^{-1}}{1-s(z-x)^{-1}}d\mu_X(x)=sG_X(z-s),
    \end{equation*}
    thus
    \begin{equation}\label{eq:etaT}
      \widetilde{\eta}_{(z-X)^{-1}}(\delta(z))=\frac{G_X(z-\delta(z))}{1+\delta(z)G_X(z-\delta(z))}.
    \end{equation}
    On the other hand
    \begin{equation*}
      \widetilde{\eta}_Y(s)=\frac{\mgfpsi_Y(s)}{s(1+\mgfpsi_Y(s))}=\frac{\frac{1}{s}G_Y\left(\frac{1}{s}\right)-1}{G_Y\left(\frac{1}{s}\right)}=\frac{G_Y\left(\frac{1}{s}\right)-s}{sG_Y\left(\frac{1}{s}\right)}
    \end{equation*}
    and hence the fixed point equation
    \begin{equation*}
      \delta(z)=\widetilde{\eta}_Y\left(\widetilde{\eta}_{(z-X)^{-1}}(\delta(z))\right)
    \end{equation*}
    becomes
    \begin{equation*}
      \delta(z)
     =\frac{G_Y\Bigl(\frac{1}{\widetilde{\eta}_{(z-X)^{-1}}(\delta(z))}\Bigr)-\widetilde{\eta}_{(z-X)^{-1}}(\delta(z))}           {\widetilde{\eta}_{(z-X)^{-1}}(\delta(z))\,G_Y\Bigl(\frac{1}{\widetilde{\eta}_{(z-X)^{-1}}(\delta(z))}\Bigr)}
     =\frac{1}{\widetilde{\eta}_{(z-X)^{-1}}(\delta(z))}
      -\frac{1}{G_Y\Bigl(\frac{1}{\widetilde{\eta}_{(z-X)^{-1}}(\delta(z))}\Bigr)}.
    \end{equation*}
    Finally using \eqref{eq:etaT} we get
    \begin{equation*}
      \delta(z)=\delta(z)+\frac{1}{G_X(z-\delta(z))}-\frac{1}{G_Y\Bigl(\delta(z)+\frac{1}{G_X(z-\delta(z))}\Bigr)}.
    \end{equation*}
    With $F_U(z)=\tfrac{1}{G_U(z)}$, we get
    \begin{equation*}
      F_X(z-\delta(z))=F_Y\left(\delta(z)+F_X(z-\delta(z))\right).
    \end{equation*}
    It is straightforward fo check that with $h_1(\lambda)=F_X(\lambda)-\lambda$, $h_2(\lambda)=F_Y(\lambda)-\lambda$ and $\omega(z)=z-\delta(z)$  the above equation is equivalent to
    \begin{equation*}
      \omega(z)=z+h_2(z+h_1(\omega(z)))
    \end{equation*}
    which is exactly the fixed point equation for subordination function of the
    additive free convolution found in the proof of
    \cite[Theorem~4.1]{BelBerNewApproach}.
  \end{example}

  \begin{remark}
    In a similar way the fixed point equation for the subordination function for the free multiplicative convolution can be derived. 
  \end{remark}

    \subsection{Example: Bernoulli laws}
  Next we determine with our method the free convolution of the Bernoulli law $1/2\left(\delta_{-1}+\delta_1\right)$ with itself.   \begin{example} \label{eg:35}
    Let $X,Y$ be free both distributed $1/2\left(\delta_{-1}+\delta_1\right)$.
    The relevant transforms are
    \begin{align}\label{eq:eg1}
      G_X(z)&=G_Y(z)=\frac{z}{z^2-1}, & \eta_X(z)&=\eta_Y(z)=z^2, & \eta_{(z-X)^{-1}}(s)&=\frac{s(s-z)}{1+z(s-z)}
    \end{align}
    From Example~\ref{ex:addConv} it follows that
    \begin{equation*}
      G_{X+Y}(z)=G_X\left(z-\eta^{Y}_{(z-X)^{-1}}(1)\right),
    \end{equation*} 
    Thus in order to calculate the distribution of $X+Y$ all we need to find is $\eta^{Y}_{(z-X)^{-1}}(1)$.
    On the other hand from \eqref{eq:MSTz} we have the identity
    \begin{equation*}
      \mgfpsi_{(z-X)^{-1}Y}(1)=\mgfpsi_{(z-X)^{-1}}\left(\eta^{Y}_{(z-X)^{-1}}(1)\right)
    \end{equation*}
    which is equivalent to
    \begin{equation*}
      \eta_{(z-X)^{-1}Y}(1)=\eta_{(z-X)^{-1}}\left(\eta^{Y}_{(z-X)^{-1}}(1)\right),
    \end{equation*}
    hence we have
    \begin{equation}\label{eq:eg2}
      \eta^{Y}_{(z-X)^{-1}}(1)=\eta^{\langle-1\rangle}_{(z-X)^{-1}}\left(	\eta_{(z-X)^{-1}Y}(1)\right).
    \end{equation}
    Since $(z-X)^{-1}$ and $Y$ are free we have 
    \begin{equation*}
      \eta^{\langle -1 \rangle}_{(z-X)^{-1}Y}(s)=\eta^{\langle -1 \rangle}_{(z-X)^{-1}}(s)\eta^{\langle -1 \rangle}_{Y}(s)/s,
    \end{equation*}
    substituting $s:=\eta_{(z-X)^{-1}}(s)$ we obtain
    \begin{equation*}
      \eta^{\langle -1 \rangle}_{(z-X)^{-1}Y}\left(\eta_{(z-X)^{-1}}(s)\right)=s\eta^{\langle -1 \rangle}_{Y}\left(\eta_{(z-X)^{-1}}(s)\right)/\eta_{(z-X)^{-1}}(s),
    \end{equation*}
    Observe that by \eqref{eq:eg2} we have that
    $\eta^{Y}_{(z-X)^{-1}}(1)$ is the inverse at $1$ of the LHS of the above equation. Denoting $\lambda_z=\eta^{Y}_{(z-X)^{-1}}(1)$ after simple transformations we get
    \begin{align*}
      \eta_Y\left(\frac{\eta_{(z-X)^{-1}}\left(\lambda_z\right)}{\lambda_z}\right)=\eta_{(z-X)^{-1}}(\lambda_z).
    \end{align*}
    Using \eqref{eq:eg1} we get
    \begin{equation*}
      \left(\frac{\lambda_z (\lambda_z - z)}{1 + (\lambda_z - z) z}\right)^2=\frac{(\lambda_z - z)}{-1 + (\lambda_z - z) z}
    \end{equation*}
    From the relation $G_{X+Y}(z)=G_X(z-\lambda_z)$ we see that $\lambda_z\neq z$ and we can divide both sides by $(\lambda_z-z)/(1+(\lambda_z-z)z)$. Solving the resulting equation we get
    \[\lambda_z=\frac{z\pm\sqrt{z^2-4}}{2},\]
    and we choose the branch for which $\lambda_z/z\to 0$ as $z\to\infty$, thus $\lambda_z=\tfrac{z-\sqrt{z^2-4}}{2}$.
    Finally after a simple calculation we get
    \[G_{X+Y}(z)=G_X(z-\lambda_z)=\frac{1}{\sqrt{z^2-4}}.\]
    Thus we recover the well known fact that $X+Y$ has the arcsine distribution.
  \end{example}

  \subsection{Example: Free compression}
  For the next example we take free random variables $P,Y$ where $P$ is a projection and $Y$ has semicircular distribution and calculate the distribution of $P+PYP$. Since the distribution of $P+PYP$ is the same as the distribution of $P(1+Y)P$ one can find it using $S$-transform, so again the result is not new.
  \begin{example}
    Let $P$ be a projection such that $\varphi(P)=p>0$ and assume that $Y$ is free from $P$ and $Y$ has Wigner semicircle law.
    We will calculate the distribution of $P+PYP$. From Theorem \ref{Cor:CantConv} we get that 
    \begin{equation}\label{eq:GprojSemi}
      G_{P+PYP}(z)=\int_{\mathbb{R}}\frac{1}{z-x-\delta(z)x}d\mu_P(x)=\frac{1-p}{z}+\frac{p}{z-1-\delta(z)},
    \end{equation}
    where the function $\delta$ satisfies
    \begin{equation}\label{eq:deltaprojSemi}
      \widetilde{\eta}_{Y}\left(\widetilde{\eta}_{P(z-P)^{-1}P}(\delta(z))\right)=\delta(z).
    \end{equation}
    
    The Cauchy transform of the semicircle law satisfies the quadratic equation
    $G_Y(z)^2-zG_Y(z)+1=0$ and a simple calculation shows that the shifted
    Boolean cumulant generating function satisfies the quadratic equation
    $$
    z\widetilde{\eta}(z)^2-\widetilde{\eta}+z=0
    $$
    or equivalently
    \begin{equation}
      \label{eq:etatildesemicircleimplicit}      
      \frac{\widetilde{\eta}_Y(z)}{1+\widetilde{\eta}_Y(z)^2} = z
      .
    \end{equation}
    On the other hand
    \begin{equation*}
      \widetilde{\eta}_{P(z-P)^{-1}P}(s)=\frac{p}{z-1-s(1-p)}.      
    \end{equation*}
    and together with  \eqref{eq:deltaprojSemi} we get the equation
    $$
    \frac{\delta(z)}{1+\delta(z)^2}= \frac{p}{z-1-\delta(z)(1-p)}
    .
    $$
    The solution is   
    \begin{equation*}
      \delta(z)=\frac{z-1-\sqrt{(z-1)^2-4p}}{2},
    \end{equation*}
    and substituting this into \eqref{eq:GprojSemi} we finally get
    \begin{equation*}
      G_{P+PYP}(z)=\frac{1-p}{z}+\frac{z-1-\sqrt{(z-1)^2-4p}}{2}.
    \end{equation*}
  \end{example}
  
\section{Algebraic Cauchy transforms}
\label{sec:algebraic}

In this section we study in detail an example which cannot be easily treated with previous tools from free harmonic analysis. We assign to $X$ and $Y$  semicircle laws and consider the simplest nontrivial function $f(x)=x$.
This example can serve as a template for an algorithm which allows in principle to determine an explicit equation for the Cauchy transform of $X+p(X)Yp^*(X)$ for any polynomial $p$ and for any pair of free random variables $X,Y$ with algebraic Cauchy transforms.
The algorithm consists in a careful application
of classical tools from algebraic geometry.
We evaluate the Cauchy integrals  implicitly and
then eliminate auxiliary variables  via resultants.
In the final step the correct equation is selected using 
Newton polygons
\cite[\S{}8.3]{BrieskornKnorrer:1986:plane}.

\subsection{Resultants and Elimination}
A basic tool for analyzing algebraic equations consists in elimination, that
is, computing projections of algebraic varieties to lower dimensions. There are
basically two algorithmic approaches to achieve this: Gr\"obner bases and
resultants. We stick to resultants here because they are well suited for small
systems like ours and allow for manageable step-by-step computations.

The basic problem of elimination can be boiled down to an iteration of the following question:
Given two polynomials $f(x)$ and $g(x)$, do they have a common zero?
The answer to this question can be read off a certain polynomial in the
coefficients of $f$ and $g$ called  the \emph{resultant},
which can be obtained by a fraction free version of the Euclidean algorithm. 
\begin{theorem}[{\cite[\S3.6]{CLO:2015:ideals},
    \cite[Chapter~3]{CLO:2005:using},\cite{Sturmfels:2002:solving}}]
  Given polynomials $f(x)=a_0+a_1x+\dots+a_mx^m$ and
  $g(x)=b_0+b_1x+\dots+b_nx^n$
  we define their \emph{resultant} by
  \begin{equation*}
    \Res(f,g) = a_m^nb_n^m\prod_{i,j}(\xi_i-\eta_j)
    ,
  \end{equation*}
  where $\xi_1,\xi_2,\dots,\xi_m$ and $\eta_1,\eta_2,\dots,\eta_n$
  are the roots of $f(x)$ and $g(x)$,     respectively.
  The resultant has the following properties:
  \begin{enumerate}[1.]
   \item $\Res(f,g)$ is an integer polynomial in the coefficients $a_i$ and
    $b_j$.
   \item $\Res(f,g)=0$ if and only if $f$ and $g$ have a nontrivial common
    factor.
   \item There are polynomials $A(x)$ and $B(x)$ such that
    $A(x)f(x)+B(x)g(x) = \Res(f,g)$.
   \item $\Res(f_1f_2,g)=\Res(f_1,g)\Res(f_2,g)$.
   \item $\Res(f,g)=(-1)^{mn}\Res(g,f)$.
  \end{enumerate}
  There are various methods to compute the resultant:
  \begin{enumerate}[1.]
   \item It is the determinant of the Sylvester matrix
    \begin{equation*}
      \Res(f,g) =
      \begin{vmatrix}
        a_0   &      &      &      &b_0\\
        a_1   &a_0   &      &      &b_1   &b_0\\
        a_2   &a_1   &\ddots&      &b_2   &b_1   &\ddots\\
        \vdots&a_2   &\ddots&a_0   &\vdots&b_2   &\ddots&b_0\\
        a_m   &\vdots&\ddots&a_1   &b_n   &\vdots&\ddots&b_1\\
              &  a_m &      &a_2   &      &b_n   &      &b_2\\
              &      &\ddots&\vdots&      &     &\ddots&\vdots\\
              &      &      &a_m   &      &     &      &b_n
      \end{vmatrix}
    \end{equation*}
   \item
    Denote by $A_f=\IC[x]/\langle f\rangle$ the quotient ring and by
    $M_g:A_f\to A_f$     the multiplication operator $M_g[h]_{A_f}=[gh]_{A_f}$.
    Then
    \begin{equation*}
      \Res(f,g) = a_m^n\det M_g
      .
    \end{equation*}
    Equivalently, since the matrix $M_x$ of multiplication by $x$ with respect
    to the canonical basis of $A_f$ is given by the companion matrix
    $$
    C_f =
    \begin{bmatrix}
      0&0&\dots&0&-a_0/a_m\\
      1&0&\dots&0&-a_1/a_m\\
      0&1&\dots&0&-a_2/a_m\\
      \vdots&\vdots&\ddots&\vdots&\vdots\\
      0&0&\dots&1&-a_{m-1}/a_m
    \end{bmatrix}
    $$
    the resultant is
    \begin{equation*}
      \Res(f,g) = a_m^n\det g(C_f)
    \end{equation*}
   \item
    If $f$ and $g$ have the same degree $m=n$ then with
    $$
    h(x,y) = \frac{f(x)g(y)-f(y)g(x)}{x-y} = \sum_{i,j=0}^{n-1} c_{ij}x^iy^j
    $$
    the resultant is equal to the \emph{B\'ezout determinant}
    \begin{equation*}
      \Res(f,g) = \det [c_{ij}]_{i,j=0,1,\dots,n-1}
    \end{equation*}
   \item
    Assume $m\geq n$, then
    the resultant can be computed by a variant of the Euclidean algorithm:
    If $f(x)=q(x)g(x)+r(x)$, then
    \begin{equation*}
      \Res(f,g)=(-1)^{mn}b_n^{m-\deg r(x)}\Res(g,r)
    \end{equation*}
  \end{enumerate}
\end{theorem}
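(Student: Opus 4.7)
The plan is to take the product formula $\Res(f,g)=a_m^n b_n^m\prod_{i,j}(\xi_i-\eta_j)$ as the primary definition and derive the two useful reformulations
\begin{equation*}
  \Res(f,g) = a_m^n\prod_{i=1}^m g(\xi_i) = (-1)^{mn}b_n^m\prod_{j=1}^n f(\eta_j)
\end{equation*}
by using $g(x)=b_n\prod_j(x-\eta_j)$ and $f(x)=a_m\prod_i(x-\xi_i)$ to regroup the double product. These two expressions will be the workhorses for everything that follows.

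For the algebraic properties, first note that $a_m^n\prod_i g(\xi_i)$ is symmetric in $\xi_1,\dots,\xi_m$; by the fundamental theorem of symmetric functions it is a polynomial in the elementary symmetric polynomials of the $\xi_i$, which are $(-1)^k a_{m-k}/a_m$, and the factor $a_m^n$ clears all denominators, yielding property 1. Property 2 is immediate from the product form (some $\xi_i=\eta_j$ iff a common root exists, iff a common factor exists over $\IC[x]$). Property 4 is a direct reading of the product formula after splitting the product over roots of $f_1f_2$, and property 5 follows from counting the sign changes in swapping the two sets of roots. Property 3 (the Bezout identity) I would prove by exhibiting $A(x)$ and $B(x)$ from the extended Euclidean algorithm applied in $\IC[x]$, then checking that the constant on the right has the degree and leading coefficient forced by the product formula; alternatively, it falls out of the Sylvester matrix description below by Cramer's rule.

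For the four computational formulas I plan a single unified route via the quotient ring $A_f=\IC[x]/\langle f\rangle$. The multiplication operator $M_g$ on $A_f$ has eigenvalues $g(\xi_1),\dots,g(\xi_m)$ (verified on the basis of root-interpolating idempotents, or by diagonalization in the generic case and continuity in the coefficients), so $\det M_g=\prod_i g(\xi_i)$ and hence $a_m^n\det M_g=\Res(f,g)$, which is formula 2; the companion matrix $C_f$ represents multiplication by $x$, so $M_g=g(C_f)$. For the Sylvester determinant (formula 1), I would identify the Sylvester matrix as representing the linear map $(A,B)\mapsto Af+Bg$ on pairs with $\deg A<n$, $\deg B<m$, show its determinant vanishes exactly when $f$ and $g$ share a factor, and pin down its value by matching leading terms against $\Res(f,g)$ (for instance, by testing on $g(x)=b_n x^n$ where the matrix is block triangular and the determinant visibly equals $a_m^n b_n^m\prod_i\xi_i^n=\Res(f,g)$). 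For the B\'ezout determinant I would verify that the matrix $[c_{ij}]$ factors as a product of a Vandermonde-like matrix on the $\xi_i$ with its transpose-analogue on the $\eta_j$, giving $\prod_{i,j}(\xi_i-\eta_j)$ up to the appropriate scalar. Finally, the Euclidean recursion is the cleanest: if $f=qg+r$, then $f(\eta_j)=r(\eta_j)$ at each root of $g$, so
\begin{equation*}
  \Res(f,g)=(-1)^{mn}b_n^m\prod_j f(\eta_j)=(-1)^{mn}b_n^m\prod_j r(\eta_j)=(-1)^{mn}b_n^{m-\deg r}\Res(g,r).
\end{equation*}

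The main obstacle is not any single step but the coordination of the proofs: the product formula gives the cleanest route to the algebraic properties, while the Sylvester and B\'ezout identities naturally want matrix-theoretic arguments, and ensuring that the normalizations, signs, and leading coefficients all match across these three perspectives is where errors typically creep in. I would handle this by doing the companion-matrix formula first (where the scalar factor is transparent), then use it to calibrate the Sylvester and B\'ezout determinants by showing both agree with $a_m^n\det M_g$ after reducing to generic coefficients and invoking continuity.
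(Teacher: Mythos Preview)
The paper does not prove this theorem at all: it is stated as a classical tool with references to textbooks (Cox--Little--O'Shea and Sturmfels) and is then used without further justification in the algebraic computations of Section~6. There is therefore no paper proof to compare your proposal against.

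Your sketch is a reasonable outline of the standard textbook development, and the strategy of anchoring everything to the product formula and the companion-matrix identity is sound. A few places would need more care if you were actually writing this up: your argument for the Sylvester determinant (``pin down its value by matching leading terms'' and testing on $g(x)=b_n x^n$) only verifies agreement on a subvariety and would need a density or polynomial-identity argument to conclude equality everywhere; and the B\'ezout factorization you describe is not quite right as stated (the B\'ezout matrix does not literally factor as a product of Vandermonde-type matrices in the roots of $f$ and $g$ separately---the standard argument instead relates it to the Sylvester matrix via a block decomposition, or evaluates its determinant by observing that $[c_{ij}]$ has the same kernel behavior as the Sylvester map). But these are refinements, not genuine gaps in the overall plan.
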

When applied to multivariate polynomials, i.e., when the coefficients of the
polynomials are polynomials in other variables themselves, resultants can be
used to effectively eliminate variables from systems of polynomial equations.
In this case we will write $\Res(f,g,x)$ to emphasize which variable is to be
eliminated. 

Calculation of the resultant by hand is possible, but tedious, in
particular when it is done iteratively then the rapid growth of the
resulting expressions makes it infeasible after a few steps. Therefore
these calculations are best left to a computer algebra system.

\subsection{Arithmetics of algebraic numbers}
Most of the equations we will encounter have no explicit solution and
we will work with implicit equations exclusively. The arithmetics are
similar to those of the field of algebraic numbers and can be done
implicitly.  More precisely, given two algebraic numbers $\alpha$ and
$\beta$ and polynomials $f$ and $g$ such that $f(\alpha)=0$ and
$g(\beta)=0$, the following table provides polynomial equations for
$\alpha\pm\beta$,
$\alpha\cdot \beta$ and $1/\alpha$:
\begin{subequations}
  \begin{align}
    \alpha+\beta &: \phantom=\Res(f(x-y),g(y),y)\\
    \alpha-\beta &: \phantom=\Res(f(x+y),g(y),y) \label{eq:alpha-beta}\\
    \alpha\cdot\beta&: \phantom=\Res(f(xy),\tilde{g}(y),y)\\
    \alpha/\beta&: \phantom=\Res(f(xy),g(y),y)  \label{eq:alpha/beta}
  \end{align}
\end{subequations}
where $\tilde{g}(y)=g(1/y)y^n$ is the reverse polynomial of a polynomial $g$
of degree $n$. Its roots are the reciprocals of the roots of $g$.

\subsection{Partial fractions and Cauchy transforms}
We will encounter  integrals of the form
$$
\int \frac{p(x)}{q(x)}\,d\mu(x)
$$
with rational integrands which can be evaluated in terms of the Cauchy
transform $G_\mu(z)$. Indeed, using divided differences we can perform a
partial fraction expansion and obtain for example
\begin{equation}
  \label{eq:divdiff}
  \int \frac{\alpha + \beta x }{(\lambda_1-x)(\lambda_2-x)}\,d\mu(x)
  =-\frac{(\alpha+\beta\lambda_1)G_\mu(\lambda_1)-(\alpha+\beta\lambda_2)G_\mu(\lambda_2)}{\lambda_1-\lambda_2}
\end{equation}

\subsection{Puiseux expansions}
\label{ssec:puiseux}
Resultants often yield reducible equations and in order to keep the
calculations manageable it is advisable to eliminate inessential factors at
every step. In the calculations below this turns out to be easy with the
exception of the final equation, which has three nontrivial factors.
In order to sort out the correct branch it is necessary to study the
Puiseux expansions of the solutions \cite[\S{}4.3]{Walker:1950:algebraic}.
Let $P(x,y)$ be a bivariate polynomial over $\IC$ and denote its degree $y$ by $n$.
Then the fundamental theorem of algebra implies that for every $x$ where the
leading
coefficient does not vanish the equation $P(x,y)=0$ has $n$ solutions (counting
multiplicity) and if in addition at least one partial derivative of $P(x,y)$
does not vanish then we can infer 
from the implicit function theorem that the solutions $y(x)$
are holomorphic and can be expanded into a convergent Taylor series.

However at singular points, where either the leading coefficient 
or both partial derivatives vanish this is not true anymore because the field
$\IC(x)$
of Taylor series is not algebraically closed.
This limitation can be overcome by using fractional power series or
Newton-Puiseux series
\begin{equation}
  \label{eq:puiseux}
y(x) = \sum_{k=k_0}^\infty c_k x^{k/n}  
\end{equation}
where $n\in\IN$ and $k_0\in\IZ$.

The field $\IC(z)^*$ of Newton-Puiseux series is algebraically closed
(see 
\cite[Corollary~13.15]{Eisenbud:1995:commutative} or
\cite[Theorem~4.3.1]{Walker:1950:algebraic}) and the solutions can
be found using an iterative process using \emph{Newton polygons},
see
\cite[\S{}4.3.2]{Walker:1950:algebraic} or
\cite[\S{}8.3]{BrieskornKnorrer:1986:plane} for an extensive discussion
including Newton's original letter which describes the method still in use
today.
Let $P(x,y)=\sum_{k=0}^np_k(x)\,y^k$
and denote by $\alpha_k$ the lowest power of $x$ appearing in the $k$-th
coefficient
$p_k(x)$, i.e., $p_k(x)=x^{\alpha_k}(a_k+b_k(x))$.
Knowing that a solution of the form \eqref{eq:puiseux} exists we make the
ansatz
$$
y(x) = x^\gamma(\eta+y_1(x))
$$
where $y_1(x)$ is a series in $x$ with positive exponents.
Collecting the lowest order contribution of each term of $P(x,y(x))$
we see that
$$
\sum_{k=0}^n (a_kx^{\alpha_k}+b_k(x))\,x^{k\gamma}(\eta+y_1(x))^k
=\sum_{k=0}^n a_kx^{\alpha_k+k\gamma}\eta^k
+P_1(x,\eta,y_1(x)) = 0
$$
where the powers of $x$ appearing in $P_1(x,\eta,y_1(x))$ are strictly larger
then those in the \emph{critical polynomial}
$$
\sum_{k=0}^n a_kx^{\alpha_k+k\gamma}\eta^k
.
$$
Let $\beta = \min\{\alpha_k+\gamma k\mid k \in 0,1,\dots,n\}$ be the
smallest exponent appearing in the critical polynomial.  In order for
a solution to exist, the lowest order terms must cancel and therefore
the power $\beta$ must appear at least twice.  This condition is
visualized in the \emph{Newton polygon} which is the convex hull of
the points $\{(k,\alpha_k)\mid k=1,2,\dots,n\}$ in the plane and the
admissible values of $\gamma$ can be read off the slopes of the lines
at the lower boundary of this convex polygon.

\subsection{An equation for the Cauchy transform of $T=X(z-X)^{-1}X$}
\label{ssec:CauchyT}
We first compute the Cauchy transform of $T=X(z-X)^{-1}X$.
To this end in the present section we denote by $\mu$ the standard
semicircle law.
Its  Cauchy transform $G_\mu(z)$ 
is the inverse of the Zhukovsky
transform and satisfies the equation
$G_\mu(z)+1/G_\mu(z)=z$, i.e.,
\begin{equation}
  \label{eq:WignerCauchy}
  G_\mu(z)^2-zG_\mu(z)+1=0
  .
\end{equation}
The Cauchy transform of $T$ is the integral
$$
G_T(s) = \int \frac{d\mu(x)}{s-\frac{x^2}{z-x}}
= \int \frac{z-x}{sz-x^2-sx}\, d\mu(x)
$$
which evaluates via \eqref{eq:divdiff}  to
\begin{equation}
  \label{eq:CauchyGT}
G_T(s) =
-\frac{(\lambda_1-z)G_\mu(\lambda_1)-(\lambda_2-z)G_\mu(\lambda_2)}{\lambda_1-\lambda_2}
=
\frac{1}{s}\frac{\lambda_1^2G_\mu(\lambda_1)-\lambda_2^2G_\mu(\lambda_2)}{\lambda_1-\lambda_2}
\end{equation}
where $\lambda_1$ and $\lambda_2$ are the roots of the polynomial
\begin{equation}
  \label{eq:semisemi:denom}
  \lambda^2+s\lambda-sz = 0
\end{equation}
or equivalently,
\begin{align*}
  \lambda_1+\lambda_2 &= -s\\
  \lambda_1\lambda_1&=-sz
  .
\end{align*}
We first compute an equation for the denominator of \eqref{eq:CauchyGT}.
In order to do this we compute the resultant of type \eqref{eq:alpha-beta}
of the equation  \eqref{eq:semisemi:denom} which is satisfied by both $\lambda_i$
and obtain
$$
p_{\Delta\lambda}(x)= \Res(  (x+y)^2+s(x+y)-sz , y^2+sy-sz,y)
= x^2(x^2-4sz-s^2)
.
$$
Next we compute an equation for the numerator of  \eqref{eq:CauchyGT}.
From  \eqref{eq:WignerCauchy} we infer that the
function $\widetilde{G}_\mu(\lambda)=\lambda^2G_\mu(\lambda)$
is algebraic and satisfies the equation
$$
\frac{\lambda^2}{\widetilde{G}_\mu(\lambda)} + \frac{\widetilde{G}_\mu(\lambda)}{\lambda^2} = \lambda
$$
that is, it is  a root of the polynomial
$$
x^2-\lambda^3x+\lambda^4=0
.
$$
Taken together with   \eqref{eq:semisemi:denom} we see that
$\widetilde{G}_i=\lambda_i^2G(\lambda_i)$ is the $y$-component of the solution of the
algebraic system
\begin{equation*}
  \begin{aligned}
    \lambda^2+s\lambda-sz &= 0\\
    \widetilde{G}^2-\lambda^3\widetilde{G}+\lambda^4&=0
  \end{aligned}
\end{equation*}
We can thus obtain an equation for $\widetilde{G}_i$ as a function of $s$ and $z$ by
computing the resultant with respect to $\lambda$:
\begin{align*}
  p_{\widetilde{G}}(x)
    &= \Res(    \lambda^2+s\lambda-sz,    x^2-\lambda^3x+\lambda^4,
    \lambda)\\
    &=
  x^4
+( 3  s^2  z+s^3 ) x^3
+(  -s^3  z^3+2  s^2  z^2+4  s^3  z+s^4 ) x^2
-s^4  z^3  x
+s^4  z^4 
\end{align*}
The equation for the difference $\widetilde{G}(\lambda_1)-\widetilde{G}(\lambda_2)$ 
can be obtained as a resultant of type \eqref{eq:alpha-beta}:
\begin{align*}
  p_{\Delta \widetilde{G}}(x)
  &= \Res(p_{\widetilde{G}}(x+y),p_{\widetilde{G}}(y),y)\\
  &= x^4   p_{\Delta \widetilde{G}}^{(1)}(x)\,    p_{\Delta \widetilde{G}}^{(2)}(x)
\end{align*}
where the nontrivial factors are
\begin{subequations}
  \label{eq:pDeltaGtilde12}
  \begin{align}
      \label{eq:pDeltaGtilde1}
p_{\Delta \widetilde{G}}^{(1)}(x)
&=
  \begin{multlined}[t]
x^4 -s^2 ( 2 z^3 s+9 z^2 s^2+6 z s^3+s^4 -8 z^2 -16 z s -4 s^2 ) x^2+s^4 z^4 (
 z^2 s^2 -8 z s -4 s^2+16 )
  \end{multlined}
    \\
      \label{eq:pDeltaGtilde2}    
p_{\Delta \widetilde{G}}^{(2)}(x)
&=
    \begin{multlined}[t]
      \makeatletter{}x^8 -2 s^2 \bigl( 3 z^3 s+9 z^2 s^2+6 z s^3+s^4 -4 z^2 -8 z s -2 s^2 \bigr)
x^6
\\
+s^4\bigl( 9 z^6 s^ 2+54 z^5 s^3+117 z^4 s^4+114 z^3 s^5+54 z^ 2 s^6+12 z
s^7+s^8 -16 z^5 s -110 z^4 s^2
\\
-202 z^3 s^3 -140 z^2 s^4 -40 z s^5 -4 s^6+16 z^
4+96 z^3 s+120 z^2 s^ 2+48 z s^3+6 s^4\bigr) x^4
\\
-s^7 \bigl( 4 z^9 s^ 2+9 z^8
s^3+6 z^7 s^ 4+z^6 s^5 -8 z^8 s+14 z^7 s^2+60 z^6 s^ 3+54 z^5 s^4+18 z^4 s^5
\\
+2z^3 s^6
+32 z^ 7+136 z^6 s+400 z^5 s^ 2+580 z^4 s^3+418 z^3 s^4+154 z^2 s^5+28 z
s^6
\\
+2 s^7 -128 z^5 -416 z^ 4 s -448 z^3 s^2 -216 z^ 2 s^3 -48 z s^4 -4 s^5\bigr) x^2
\\
+s^{10} \left( 4 z^5+5 z^4 s+z^3 s^2+16 z^ 3+20 z^2 s+8 z s^2+s^3
\right)^2
    \end{multlined}
  .
\end{align}
\end{subequations}

Next we compute an equation for the divided difference
$G_T(s)=\frac{\Delta\widetilde{G}(\lambda)}{s\Delta\lambda}$
as a resultant of type \eqref{eq:alpha/beta}.
We do this for the two branches
\eqref{eq:pDeltaGtilde12} separately.
\subsubsection{Branch  1 \eqref{eq:pDeltaGtilde1}}
In order to accomodate the factor $1/s$ we multiply $x$ with $s$ in
$p_{\Delta\widetilde{G}}$ and compute the resultant
\begin{equation*}
  \Res(p_{\Delta\widetilde{G}}^{(1)}(sxy), p_{\Delta\lambda}(y),y)
  =
        s^8\,p_{G_T}^{(1)}(x, s)^2
\end{equation*}
where the nontrivial factor
\begin{equation*}
  p_{G_T}^{(1)}(x, s)
  =
    \begin{multlined}[t]
    \makeatletter{}s^2 \bigl( s+4 z \bigr)^2 x^4 -s \bigl( s+4 z \bigr) \bigl( s^4+6 s^3 z+9 s^2
z^2+2 s z^3 -4 s^2 -16 s z -8 z^2 \bigr) x^2
\\
+z^4 \bigl( s z -2 s -4 \bigr)\bigl( s z+2 s -4 \bigr)
    \end{multlined}
\end{equation*}
is a branch of the equation for $G_T(s)$.
To see whether it is the correct branch we switch to
the moment generating function $\mgf_T(s) =\frac{1}{s} G_1(1/s)$.
It is a root of the nontrivial factor of the numerator of the function
\begin{equation*}
  p_{G_T}^{(1)}(sx,1/s) =
  \begin{multlined}[t]
  \makeatletter{}s^{-4}\bigl(
  s^4( 4  s  z+1 )^2  x^4
 +z^4  s^2( 16  s \sp 2 -8  s  z+z^2 -4 )
\\  
+( 32  s^5  z^3 -8  s^4  z^4+72  s^4  z^2 -38  s^3  z^3+32  s^3  z -33  s^2
   z^2+4  s^2 -10  s  z -1 ) x^2
\bigr) .    
  \end{multlined}
\end{equation*}
However at $s=0$ this means that $\mgf_T(0)$ satisfies the equation
$$
-x^2=0
$$
which is impossible since $\mgf_T(0)=1$.

\subsubsection{Branch 2 \eqref{eq:pDeltaGtilde2}}
As for the first branch we begin with the resultant
\begin{equation*}
    \Res(p_{\Delta\widetilde{G}}^{(2)}(sxy), p_{\Delta\lambda}(y),y)
    = s^{20}(s+4z)^4\, p_{G_T}^{(2,1)}(x,s)^2\, p_{G_T}^{(2,2)}(x,s)^2
\end{equation*}
where the nontrivial factors are
\begin{subequations}
  \begin{align}
    p_{G_T}^{(2,1)}(x, s) &= 
    \begin{multlined}[t]
    \makeatletter{}s \bigl( s+4 z \bigr) x^4 -2 s \bigl( s+z \bigr) \bigl( s+4 z \bigr) x^3
\\
+\bigl(
s^4+6 s^3 z+9 s^2 z^2+5 s z^3+2 s^2+8 s z+4 z^2 \bigr) x^2
\\
-\bigl( s+z \bigr)
\bigl( s z^3+2 s^2+8 s z+4 z^2 \bigr) x+s z^3+z^4+s^2+4 s z+4 z^2
  \end{multlined}
\label{eq:pGT21}
    \\
    p_{G_T}^{(2,2)}(x, s) &= 
    \begin{multlined}[t]
    \makeatletter{}s \bigl( s+4 z \bigr) x^4+2 s \bigl( s+z \bigr) \bigl( s+4 z \bigr) x^3
\\
+\bigl(
s^4+6 s^3 z+9 s^2 z^2+5 s z^3+2 s^2+8 s z+4 z^2 \bigr) x^2
\\
+\bigl( s+z \bigr)
\bigl( s z^3+2 s^2+8 s z+4 z^2 \bigr) x+s z^3+z^4+s^2+4 s z+4 z^2
 .
    \end{multlined}
\label{eq:pGT22}
  \end{align}
\end{subequations}

To see which subbranch yields the correct solution we switch to
the moment generating function $\mgf_T(s) =1/s G_1(1/s)$.
\begin{description}
 \item [Subbranch 2.1 \eqref{eq:pGT21}]
  The equation for the mgf
  is a root of the numerator of the polynomial
  \begin{equation}
    \label{eq:pMT21}
    p_{\mgf_T}^{(2,1)}(sx,1/s)
    =
    \begin{multlined}[t]
      \makeatletter{}    s^{-2}
    \Bigl(
s^4 \left( 4 s z+1 \right)x^4
-2 s^2 \left( s z+1 \right)\left( 4 s z+1 \right)x^3
\\
+\left( 4 s^4 z^2+5 s \sp 3 z^3+8 s^3 z+9 s^2 z^2+2 s^2+6 s z+1 \right)x^2
\\
-\left( s z+1 \right)\left( 4 s^2 z^2+s z^3+8 s z+2 \right)x
+s^2 z^4+4 s^2 z^2+s z^3+4 s z+1 
    \Bigr)
 .
    \end{multlined}
  \end{equation}
  At $s=0$ this implies that $\mgf_T(0)$ satisfies the equation
  $$
  x^2-2x+1=0
  ,
  $$
  i.e., $\mgf_T(0)=1$ and this is the correct branch.

 \item [Subbranch 2.2 \eqref{eq:pGT22}]
  The equation for the mgf
  is a root of the numerator of polynomial
  \begin{equation*}
    p_{\mgf_T}^{(2,2)}(sx,1/s) =
    \begin{multlined}[t]
      \makeatletter{}    s^{-2}
    \Bigl(
s^4 \left( 4 s z+1 \right)x^4
    +2 s^2 \left( s z+1 \right)\left( 4 s z+1    \right)x^3
\\   +\left( 4 s^4 z^2+5 s^3 z^3+8 s^3 z+9 s^2 z^2+2 s^2+6 s z+1
    \right)x^2
\\
    +\left( s z+1 \right)\left( 4 s^2 z^2+s z^3+8 s z+2 \right)x
    +s^2 z^4+4 s^2 z^2+s z^3+4 s z+1 
    \Bigr)
 .
    \end{multlined}
  \end{equation*}
  At $s=0$ this implies that $\mgf_T(0)$ satisfies the equation
  $$
  x^2+2x+1=0
  ,
  $$
  i.e., $\mgf_T(0)=-1$, which is not the correct solution.
\end{description}

We conclude that $G_T(s)$ is a root of  the polynomial
$p_{G_T}(x,s)=p_{G_T}^{(2,1)}(x,s)$ from \eqref{eq:pGT21}.

\subsection{An equation for the subordination function}
We use \eqref{eq:mainthm:deltaequation2}
to compute an algebraic equation for the subordination function $\delta(z)$.
First we obtain an equation for the Boolean cumulant generating function $\tilde{\eta}_T(s)$ by substituting
$\mgf_T(s)=\frac{1}{1-s\tilde{\eta}_T(s)}$ into equation \eqref{eq:pMT21}:
The numerator of the resulting rational expression is
\begin{equation*}
  p_{\mgf_T}(1/(1-sx),s)
  = \frac{s^2}{(1-sx)^4}
    \,
    p_{\tilde{\eta}_T}(x,s)
\end{equation*}
where
\begin{equation*}
  p_{\tilde{\eta}_T}(x,s) =
  \begin{multlined}[t]
    \makeatletter{}s^2 \left( s^2 z^4+4 s^2 z^2+s z^3+4 s z+1 \right)x^4
\\    
    +s \left( 4 s^3 z^3 -3 s^2 z^4 -4 s^2 z^2 -3 s z^3 -6 s z -2 \right)x^3
\\    
    +\left( 4 s^4 z^2 -7 s^3 z^3+8 s^3 z+3 s^2 z^4 -3 s^2 z^2+2 s^2+3 s z^3+1
    \right)x^2
\\    
    +\left( 2 s^2 z^3 -6 s^2 z -s z^4+2 s z^2 -2 s -z^3+2 z \right)x
\\    
    +4 s^3 z -4 s^2 z^2+s^2+s z^3 -2 s z+z^2 
  \end{multlined}
\end{equation*}
is an equation for $\tilde{\eta}_T(s)$.
The shifted Boolean cumulant generating function of the semicircle law
satisfies the equation \eqref{eq:etatildesemicircleimplicit},
or equivalently
$$
p_{\delta,\eta}(\delta(z),\tilde{\eta}_T(\delta(z)))=0
$$
where
$$
p_{\delta,\eta}(x,y) = (x^2+1)y-x
.
$$
We get an equation for $\delta(z)$ by eliminating $\tilde{\eta}$,
i.e., taking the resultant
\begin{equation}
  \label{eq:pdelta}
  \begin{aligned}
  p_\delta(y,z)
  &:=  \Res(p_{\mgf_T}(y, x), p_{\delta,\eta}(x,y), y)\\
  & =
  \begin{multlined}[t]
    \makeatletter{}4 z x^{11}
+x^{10}
+16 z x^9
+\left( -8 z^2+4 \right)x^8
+14 z x^7
+\left( -20 z^2+4 \right)x^6
\\
+\left( 5 z^3 -6 z \right)x^5
-7 z^2 x^4
+\left( 6 z^3 -4 z \right)x^3
+\left( -z^4+2 z^2 \right)x^2
+z^2 
  \end{multlined}
  \end{aligned}
\end{equation}

\subsection{An equation for the Cauchy transform of $X+XYX$}
Finally we are ready to evaluate the integral \eqref{eq:CauchyTrCantConv}
by the same method as in section~\eqref{ssec:CauchyT} 
\begin{equation}
  \label{eq:CauchyX+XYX}
G_{X+XYX}(z)=\int\frac{1}{z-t-\delta(z)t^2}\,d\mu(t)
= \frac{1}{\delta(z)}\frac{G_\mu(\lambda_1)-G_\mu(\lambda_2)}{\lambda_1-\lambda_2}
\end{equation}
where $\lambda_i$ satisfy the equation $p_\lambda(\lambda_i,\delta(z))=0$ where
$$
p_\lambda(x,y)=yx^2+x-z
.
$$
and $G_X(z)$ is the Cauchy transform of the standard semicircle law and
satisfies the Zhukovsky equation~\eqref{eq:WignerCauchy}.
Thus $G_\mu(\lambda)$ satisfies $p_G(G(\lambda),\delta(z),z)=0$ where $p_G$ is the resultant
\begin{equation*}
  \begin{aligned}
    p_G(x,y,z)
    &= \Res(yt^2+t-z, x^2-tx+1, t) \\
    &=yx^4+x^3+(2y-z)x^2+x+y
\end{aligned}
\end{equation*}
From this we compute the resultant  \eqref{eq:alpha-beta} and obtain
$$
\Res(p_G(x+x',y,z),p_G(x',y,z),x') = p_{\Delta G}^{(1)}(x,y,z)\, p_{\Delta G}^{(2)}(x,y,z)
$$
where
\begin{subequations}
\begin{align}
  p_{\Delta G}^{(1)}(x,y,z)
  &= y^2 x^4 -( 2 y z-8 y^2 +1)    x^2+z^2 -8 y z+16 y^2 -4
    \label{eq:pDeltaG1}
  \\
  p_{\Delta G}^{(2)}(x,y,z)
  &=
    \begin{multlined}[t]
      y^4 x^8
      -( 6 y^3 z-8 y^4 +2 y^2 )x^6
      +( 9 y^2 z^2+( -16 y^3+6 y )z+16 y^4 -6 y^2+1 )x^4
      \\
      -( 4 y z^3-( 8 y^2 -1 )z^2+( 32 y^3-10 y )z +8 y^2-2 )x^2
      +16 y^2 z^2+8 y z+1 
    \end{multlined}
    \label{eq:pDeltaG2}  
\end{align}
\end{subequations}
which yields two possible equations for $p_{\Delta G}(G(\lambda_1)-G(\lambda_2),\delta(z),z)=0$.

Similarly we compute an equation for $\Delta\lambda=\lambda_1-\lambda_2$.
First apply the resultant \eqref{eq:alpha-beta} to $p_\lambda$:
$$
\Res(p_\lambda(x+x',y),p_\lambda(x',y),x') = x^2y^2 p_{\Delta\lambda}(x,y)
$$
then the nontrivial factor
$$
p_{\Delta\lambda}(x,y) =
y^2x^2-4yz-1
$$
yields the equation
$p_{\Delta\lambda}(\lambda_1-\lambda_2,\delta(z)) =0$.
Finally the resultant \eqref{eq:alpha/beta} 
$$
\Res(p_{\Delta G}(xx'y,y,z),p_\lambda(x',y))
$$
yields an equation for  \eqref{eq:CauchyX+XYX}.
We consider the two branches separately.
\subsubsection{Branch 1 \eqref{eq:pDeltaG1}}
The resultant is
$$
\Res(p_{\Delta G}^{(1)}(xx'y,y,z),p_\lambda(x',y))
=y^8\, p_{X+XYX,\delta}^{(1)}(x,y,z)^2
$$
where
\begin{equation*}
  p_{X+XYX,\delta}^{(1)}(x,y,z) = y^2(4yz+1)^2x^4-(2yz-8y^2+1)(4yz+1)x^2+(z-4y-2)(z-4y+2)
\end{equation*}
is a candidate for the equation $p_{X+XYX,\delta}^{(1)}(G_{X+XYX}(z),\delta(z),z) = 0$.
To check whether it is the correct branch we substitute $\delta=0$ in the integral
\eqref{eq:CauchyX+XYX} which results in the Cauchy distribution of the semicircle law.
On the other hand, substituting $y=0$ in the equation yields
$$
p_{X+XYX,\delta}^{(1)}(x,0,z) = z^2-x^2-4
$$
which is not the equation of the semicircle law.

\subsubsection{Branch 2 \eqref{eq:pDeltaG2}}
The resultant is
$$
\Res(p_{\Delta G}^{(2)}(xx'y,y,z),p_\lambda(x',y))
=y^{16}(4yz+1)^4 p_{X+XYX,\delta}^{(2,1)}(x,y,z)^2 p_{X+XYX,\delta}^{(2,2)}(x,y,z)^2
$$
where
\begin{subequations}
  \begin{align}
    \label{eq:pxxyxdelta21}
  p_{X+XYX,\delta}^{(2,1)}(x,y,z)
  &= y^2(4yz+1)x^4-2y(4yz+1)x^3+(5yz+4y^2+1)x^2-(z+4y)x+1
    \\
  p_{X+XYX,\delta}^{(2,2)}(x,y,z)
  &= y^2(4yz+1)x^4+2y(4yz+1)x^3+(5yz+4y^2+1)x^2+(z+4y)x+1
\end{align}
\end{subequations}
To select the correct subbranch we  substitute $y=0$ and obtain
\begin{align*}
  p_{X+XYX,\delta}^{(2,1)}(x,0,z) &= x^2-xz+1\\
  p_{X+XYX,\delta}^{(2,2)}(x,0,z) &= x^2+xz+1
\end{align*}
and since  $p_{X+XYX,\delta}^{(2,1)}(x,y,z)$ is the only branch
which makes the semicircle vanish at $y=0$ we conclude that it is the correct one.
\subsection{Final Elimination Step}
Finally we eliminate $\delta(z)$ from equations \eqref{eq:pxxyxdelta21} and
\eqref{eq:pdelta} and we obtain an equation
$$
\Res(p_{X+XYX,\delta}^{(2,1)}(x,y,z),p_\delta(y,z),y)
=16z\, p_{G_{X+XYX}}^{(1)}(x,z)\, p_{G_{X+XYX}}^{(2)}(x,z)\, p_{G_{X+XYX}}^{(3)}(x,z),
$$
with three branches
\begin{subequations}
\begin{align}
  p_{G_{X+XYX}}^{(1)}(x,z)
  &=
    \begin{multlined}[t]
      \makeatletter{}16z^3\,x^{11}+(16z^4+16z^2+1)\,x^{10}-z(32z^2-5)\,x^9
\\
+3(4z^2+3)\,x^8-8z(3z^2+2)\,x^7+2(28z^2+11)\,x^6
\\
-52z\,x^5 +(9z^2+22)\,x^4-16z\,x^3+9 \,x^2-z\,x+1
    \end{multlined}
\label{eq:pGXXYXcorrect}
    \\
  p_{G_{X+XYX}}^{(2)}(x,z)
  &=
    \begin{multlined}[t]
      \makeatletter{}16 z^5 \,x^{11}-( 16 z^6 +16 z^4 +z^2 ) \,x^{10}+( 96 z^5+9 z^3 ) \,x^9
-( 192 z^4+13 z^2 ) \,x^8
\\
+( 8 z^5+176 z^3+4 z ) \,x^7
-( 40 z^4 +74 z^2 ) \,x^6
+( 85 z^3+14 z ) \,x^5
\\
-( z^4 +93 z^2 +4 ) \,x^4
+( 4 z^3+52 z ) \,x^3
-( 6 z^2 +12 ) \,x^2
+4 z \,x -1
    \end{multlined}
\end{align}
\begin{multline*}
  p_{G_{X+XYX}}^{(3)}(x,z)
  = 
    \begin{multlined}[t]
      \makeatletter{}z^2 (960 z^6+32 z^4+31 z^2+1) \,x^{22}
\\
-z^2 ( 448 z^8-1504 z^6+75 z^4-409 z^2-22)\,x^{20}
\\
-2 z (64 z^{10}-800 z^8+1673 z^6+759 z^4+102 z^2+2) \,x^{19}
\\
+z^2 (64 z^{10}+480 z^8-443 z^6+6025 z\sp4+2317 z^2+201) \,x^{18}
\\
-z (384 z^{10}-1288 z^8+3682 z^6+7397 z^4+1575 z^2+50) \,x^{17}
\\
+(1300 z^{10}-7974 z\sp8+10148 z^6+7709 z^4+1205 z^2+4) \,x^{16}
\\
-2z (136 z^{10}+1712 z^8-7273 z^6+7538 z^4+3134 z^2+115 ) \,x^{15}
\\
+2 (936 z^{10}+2339 z^8-5897 z^6+7663 z^4+1649 z^2+24) \,x^{14}
\\
-z (6277 z^8-1267 z^6 -3132 z^4+10298 z^2+680) \,x^{13}
\\
+(321 z^{10}+13836 z \sp8-13869 z^6+2402 z^4+4377 z^2+197) \,x^{12}
\\
-z (2436 z^8+21743 z^6-22041 z^4+2631 z^2+1131) \,x^{11}
\\
+(8279 z^8+23938 z^6-19138 z^4+1284 z^2+361) \,x^{10}
\\
-z (68 z^8+16520 z\sp6+16753 z^4-10691 z^2+622) \,x^9
\\
+(468 z^8+21132 z^6+5447 z^4-3729 z^2+322) \,x^8
\\
-z (1407 z^6+17559 z\sp4-1244 z^2-470) \,x^7
\\
+(4 z^8+2383 z^6+9001 z^4-1718 z^2+134) \,x^6
\\
-6 z (4 z^6+407 z^4+393 z^2 -70) \,x^5
+(60 z^6+1502 z^4+61 z^2+21) \,x^4
\\
-z (80 z^4+503 z^2-79) \,x^3
+(60 z^4+63 z^2+1) \,x^2
-4 z (6 z^2-1) \,x+4 z^2
    \end{multlined}
\end{multline*}
\end{subequations}

Finally we use Newton polygons as described in Section~\ref{ssec:puiseux}
to determine the correct branch.
To this end we perform a shift to transform these equations into an
equation for the moment generating function $\mgfpsi(z) = \mgf(z)-1$.
This results in a polynomial with 210 terms,
of which 
we only reproduce here the critical ones which are required 
to draw the Newton polygons, i.e., for each power of $x$
we record the monomial with smallest degree in $z$:

\begin{subequations}
\begin{align*}
  \tilde{p}_{\mgfpsi_{X+XYX}}^{(1)}(x,z)
  &=
    \begin{multlined}[t]
      \makeatletter{}16 z^8 x^{11}
+16 z^6 x^{10}
+128 z^6 x^9
+444 z^6 x^8
-24 z^4 x^7
\\
-112 z^4 x^6
-220 z^4 x^5
+9z^2 x^4
+20 z^2 x^3
+15z^2 x^2
- x
+2z^2
    \end{multlined}
    \\
  \tilde{p}_{\mgfpsi_{X+XYX}}^{(2)}(x,z)
  &=
    \begin{multlined}[t]
      \makeatletter{}16 z^6 x^{11}
-16z^4 x^{10}
-64z^4 x^9
-48z^4 x^8
+8 z^2 x^7
\\
+16 z^2 x^6
+13z^2 x^5
- x^4
+10 z^2 x^3
+4z^2 x^2
+z^2 x
-z^8
    \end{multlined}
    \\
\tilde{p}_{\mgfpsi_{X+XYX}}^{(3)}(x,z)
  &= 
    \begin{multlined}[t]
      \makeatletter{}960z^{16} x^{22}
+21120z^{16} x^{21}
-448z^{12} x^{20}
-128 z^{10}x^{19}
+64z^8 x^{18}
\\
+768z^8 x^{17}
+4564z^8 x^{16}
-272 z^6 x^{15}
-2208 z^6 x^{14}
-8629z^6 x^{13}
\\
+321z^4 x^{12}
+1416z^4 x^{11}
+2669z^4 x^{10}
-68z^2 x^9
-144z^2 x^8
-111z^2 x^7
\\
+4 x^6
-51z^2 x^5
-16z^2 x^4
-4z^2 x^3
+37z^4 x^2
+8z^4 x
+z^4 
    \end{multlined}
\end{align*}
\end{subequations}
The corresponding Newton polygons are shown in Fig.~\ref{fig:newton}.
\begin{figure}[h]
    \begin{subfigure}[t]{0.5\textwidth}
    \centering
\begin{tikzpicture}[scale=0.5]
  \draw[step=1] (0,0) grid (11,8);
  \draw[fill] (11,8) circle (2pt);
  \draw[fill] (10,6) circle (2pt);
  \draw[fill] (9,6) circle (2pt);
  \draw[fill] (8,6) circle (2pt);
  \draw[fill] (7,4) circle (2pt);
  \draw[fill] (6,4) circle (2pt);
  \draw[fill] (5,4) circle (2pt);
  \draw[fill] (4,2) circle (2pt);
  \draw[fill] (3,2) circle (2pt);
  \draw[fill] (2,2) circle (2pt);
  \draw[fill] (1,0) circle (2pt);
  \draw[fill] (0,2) circle (2pt);
  \draw (0,2)--(1,0)--(10,6)--(11,8);
\end{tikzpicture}
        \caption{$\tilde{p}^{(1)}(x,z)$}
    \end{subfigure}    ~ 
    \begin{subfigure}[t]{0.5\textwidth}
    \centering
\begin{tikzpicture}[scale=0.5]
  \draw[step=1] (0,0) grid (11,8);
  \draw[fill] (11,6) circle (2pt);
  \draw[fill] (10,4) circle (2pt);
  \draw[fill] (9,4) circle (2pt);
  \draw[fill] (8,4) circle (2pt);
  \draw[fill] (7,2) circle (2pt);
  \draw[fill] (6,2) circle (2pt);
  \draw[fill] (5,2) circle (2pt);
  \draw[fill] (4,0) circle (2pt);
  \draw[fill] (3,2) circle (2pt);
  \draw[fill] (2,2) circle (2pt);
  \draw[fill] (1,2) circle (2pt);
  \draw[fill] (0,8) circle (2pt);
  \draw (0,8)--(1,2)--(4,0)--(10,4)--(11,6);
\end{tikzpicture}
        \caption{$\tilde{p}^{(2)}(x,z)$}
    \end{subfigure}

\bigskip{}

    \begin{subfigure}[t]{0.9\textwidth}
    \centering
\begin{tikzpicture}[scale=0.5]
  \draw[step=1] (0,0) grid (22,16);
  \draw[fill] (22,16) circle (2pt);
  \draw[fill] (21,16) circle (2pt);
  \draw[fill] (20,12) circle (2pt);
  \draw[fill] (19,10) circle (2pt);
  \draw[fill] (18,8) circle (2pt);
  \draw[fill] (17,8) circle (2pt);
  \draw[fill] (16,8) circle (2pt);
  \draw[fill] (15,6) circle (2pt);
  \draw[fill] (14,6) circle (2pt);
  \draw[fill] (13,6) circle (2pt);
  \draw[fill] (12,4) circle (2pt);
  \draw[fill] (11,4) circle (2pt);
  \draw[fill] (10,4) circle (2pt);
  \draw[fill] (9,2) circle (2pt);
  \draw[fill] (8,2) circle (2pt);
  \draw[fill] (7,2) circle (2pt);
  \draw[fill] (6,0) circle (2pt);
  \draw[fill] (5,2) circle (2pt);
  \draw[fill] (4,2) circle (2pt);
  \draw[fill] (3,2) circle (2pt);
  \draw[fill] (2,4) circle (2pt);
  \draw[fill] (1,4) circle (2pt);
  \draw[fill] (0,4) circle (2pt);
  \draw (0,4)--(3,2)--(6,0)--(18,8)--(22,16);
\end{tikzpicture}
        \caption{$\tilde{p}^{(3)}(x,z)$}
    \end{subfigure}

    \caption{Newton polygons of the three factors}
\label{fig:newton}
\end{figure}
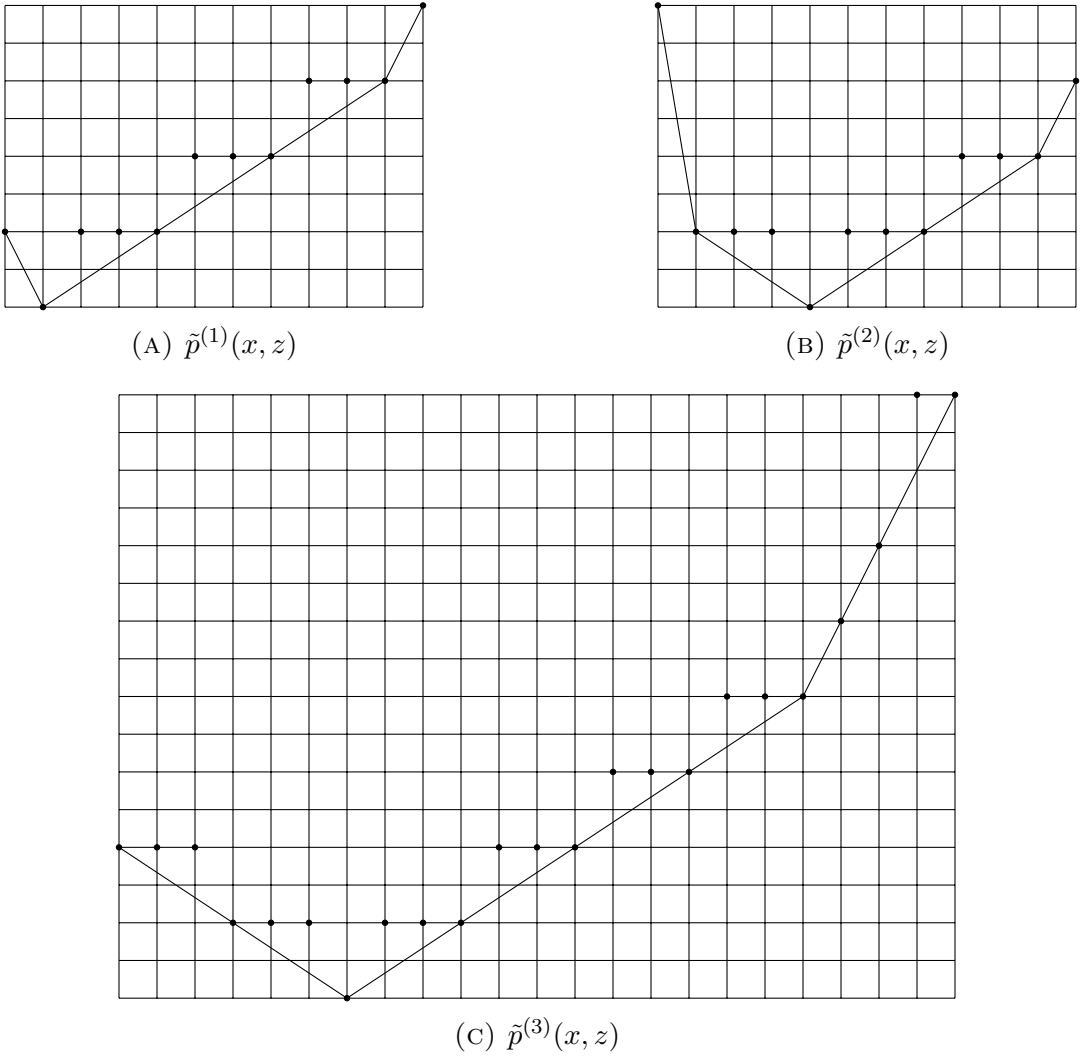

Thus for the first factor we find the solutions $\gamma\in\{-2,-2/3,2\}$, namely
\begin{subequations}
\begin{align}
  \mgfpsi(z) &= -z^{-2}(1+\Ord(z)) & &&\text{$1$ solution}\\
  \mgfpsi(z) &= z^{-2/3}(c+\Ord(z)), & 16c^9-24c^6+9c^3-1&=0& \text{$9$ solutions}\\
  \mgfpsi(z) &= z^{2}(2+\Ord(z)) & &&\text{$1$ solution}
\label{eq:correctbranch}
\end{align}
\end{subequations}

For the second factor we find the solutions $\gamma\in\{-2,-2/3,2/3,6\}$, namely
\begin{subequations}
\begin{align*}
  \mgfpsi(z) &= z^{-2}(1+\Ord(z)) & &&\text{$1$ solution}\\
  \mgfpsi(z) &= z^{-2/3}(c+\Ord(z)), & 16c^6-8c^3+1&=0& \text{$6$ solutions}\\
  \mgfpsi(z) &= z^{2/3}(c+\Ord(z)), & c^3-1&=0& \text{$3$ solutions}\\
  \mgfpsi(z) &= z^{6}(1+\Ord(z)) & &&\text{$1$ solution}
\end{align*}
\end{subequations}

For the third factor we find the solutions $\gamma\in\{-2,-2/3,2/3\}$, namely
\begin{subequations}
\begin{align*}
  \mgfpsi(z) &= z^{-2}(c+\Ord(z)), & 120c^4-61c^2-16c+8&=0& \text{$4$ solutions}\\
  \mgfpsi(z) &= z^{-2/3}(c+\Ord(z)), & 64c^{12}-272c^9+321c^6-68c^3+4&=0& \text{$12$ solutions}\\
  \mgfpsi(z) &= z^{2/3}(c+\Ord(z)), & 4c^6-4c^3+1&=0& \text{$6$ solutions}
\end{align*}
\end{subequations}

We conclude that the first factor \eqref{eq:pGXXYXcorrect} is the correct one
and the solution is \eqref{eq:correctbranch}. 
We can now proceed to plot the density and compute some moments.

\subsection{Plot and spectral radius}
A picture of the imaginary part of the branches of this function is shown in
Fig.~\ref{fig:X+XYX:semi:density}.
The density is an algebraic function and satisfies an equation obtained as follows:
\begin{enumerate}[1.]
 \item substitute $x=u+iv$ and $z=t$ into equation \eqref{eq:pGXXYXcorrect} 
 \item separate real and imaginary part
 \item compute the resultant of these two polynomials with respect to $u$ 
 \item this results in an equation of degree $110$.
\end{enumerate}
A numerical picture of this ``twin peaks'' density is shown in 
Figure~\ref{fig:X+XYX:semi:density};
\begin{figure}[h]
  \centering{}
\begin{subfigure}{1\textwidth}
	\centering{}
\begin{tikzpicture}
 \begin{axis}[xlabel={$\alpha$},
  width=0.7\textwidth,
 xmin=-4.12,
 xmax=4.12,
 ymin=0,
 ymax=0.5
 ]
 \addplot[color=gray] table {pictures/semi-densityXYX500_1.dat};
 \addplot[color=gray] table {pictures/semi-densityXYX500_2.dat}; 
 \addplot[color=gray] table {pictures/semi-densityXYX500_3.dat}; 
 \addplot[color=gray] table {pictures/semi-densityXYX500_4.dat}; 
 \addplot[color=gray] table {pictures/semi-densityXYX500_5.dat}; 
 \addplot[color=blue] table {pictures/semi-densityXYX500_0.dat}; 
 \end{axis}
\end{tikzpicture}
  \caption{Plot of the density $\mu_{X+XYX}$ for Wigner law}
  \label{fig:X+XYX:semi:density}
\end{subfigure}

  \bigskip{}

\begin{subfigure}{1\textwidth}
	\centering{}
	\includegraphics[width=0.7\textwidth]{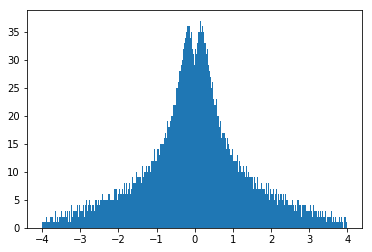}
	\caption{Histogram of the spectrum of a $4000\times4000$ random matrix model
		for $X+XYX$ for Wigner law}
	\label{fig:X+XYX:randmat}
\end{subfigure}
\caption{The spectral density of $X+XYX$ for Wigner law}
\end{figure}
for comparison, Figure~\ref{fig:X+XYX:randmat} shows the histogram of the spectrum of a sample of a $4000\times 4000$ random matrix of the form
$X+XYX$ where $X$ and $Y$ are complex standard normal Wigner matrices.

The spectral radius is a real root of the polynomial
\begin{multline*}
40310784 \, t^{18} -2717805312 \, t^{16} -41420635929 \, t^{14}+402631122484 \,
t^{12}+10964310641760 \, t^{10}
\\
+74046100039296 \, t^8+169907747390208 \,
t^6+3120150062080 \, t^4+544693026816 \, t^2+12230590464
\end{multline*}
which are approximately
$$
\pm{8.848639498045666}, \pm{4.156072921386361}
.
$$

\subsection{Recurrence}
It is well known that algebraic functions are holonomic,
i.e., they satisfy a homogeneous differential equation with polynomial coefficients
\cite{KauersPaule:2011:tetrahedron}.
Comparing coefficients of the differential equation one can then extract
a recurrence relation with polynomial coefficients and thus the sequence
of moments is holonomic.

The existence of the differential equation
follows from the observation that the algebra
over the rational functions $\IC(z)$ generated by an algebraic
function is finite dimensional and closed under differentiation and therefore
there is a nontrivial vanishing finite linear combination of the derivatives.
Thus one way to obtain such a differential equation 
is Cockle's algorithm \cite{Cockle:1861} which consists in the construction
of a minimal linear dependent set of derivatives.
The obtained equation is essentially unique and  called the \emph{differential resolvent};
for better algorithms see \cite {BostanChyzakSalvyLecerf:2007}.
In our case Cockle's algorithm produces a differential equation of order 11
with coefficients of degree up to 148 which is too large to reproduce here.
The corresponding recurrence for the moments allows their fast and efficient computation
and yields the sequence
$$
0,2,0,14,0,138,0,1586,0,19891,0,263948,0,3643590,0,51786474,0,752757867,\dots
$$

\subsection{Arcsine law}
Let $u$ and $v$  be free Haar unitaries, e.g.,
arising from the left regular representation of the free group.
Then $u+u^*$ and $v+v^*$ are distributed according to the arcsine law
$$
d\nu(t) = \frac{dt}{\pi\sqrt{4-t^2}}
$$
and the Cauchy transform satisfies the quadratic equation
$G_\nu(z)^2(z^2-4)-1=0$.
Redoing the preceding calculation with this equation instead of 
\eqref{eq:WignerCauchy}
and the relation
$$
\frac{2\widetilde{\eta}_\nu(s)}{4+\widetilde{\eta}_\nu(s)^2}=s
$$
instead of \eqref{eq:etatildesemicircleimplicit}      
one obtains after some elimination steps the final equation
\begin{multline*}
( z^2 -100 )^{2} ( z^2 -36 )^{2} {( z^2 -4 )}  ( 64z^2 -1 ) z x^{{11}}
\\
+{( z^2 -{100} )} {( z^2 -36 )}   {( {{320} {{z}^{8}}}
    -{{11011} {{z}^{6}}} -{{517228} {{z}^{4}}} -{{905232} {{z}^{2}}}
    -{14400} )} {{x}^{{10}}}
\\
+{{( {{576} \, {{z}^{{11}}}} -{{34175} \, {{z} 
\sp {9}}} -{{1963536} \, {{z}^{7}}}+{{29614176} \, {{z}^
{5}}}+{{3314048768} \, {{z}^{3}}} -{{34963200} \, z} 
)}
\, {{x}^{9}}}
\\
+{{( {{320} \, {{z}^{{10}}}}+{{48523} \, {{z}^
{8}}} -{{1787504} \, {{z}^{6}}} -{{136386272} \, {{z}^{4}}} 
-{{917888768} \, {{z}^{2}}} -{112723200} 
)}
\, {{x}^{8}}}
\\
-( 320 \, z^9-59014 \, z^7-2962872 \, z^5 +42200800 \, z^3 +43450752 \, z )
\, {{x}^{7}}
\\
-
( 576 \, z^8 +7822 \, z^6-1641032 \, z^4-62892128 \, z^2 +72966784 )
\, x^6
\\
-( 320 \, z^7 +29838 \, z^ 5 +2132368 \,      z^3-26113824 \, z ) \, x^5
\\
-( 64 \, z^6 +7034 \, z^ 4 +1647088 \,      z^2 +12050592 ) \, x^4
\\
+{{( {{2187} \, {{z}^{3}}} -{{220972} \, z} 
)}
\, {{x}^{3}}}+{{( {{385} \, {{z}^{2}}} -{11028} 
)}
\, {{x}^{2}}} -{3 \, z \, x} -1 =0
\end{multline*}
for the Cauchy transform $G_{X+XYX}(z)$.
Pictures of the onion shaped density and a random matrix approximation
are shown in Figure~\ref{fig:X+XYX:asin}.

\begin{figure}[h]
  \centering{}
\begin{subfigure}{1\textwidth}
	\centering{}
\begin{tikzpicture}
 \begin{axis}[xlabel={$\alpha$},
  width=0.7\textwidth,
 xmin=-7.25,
 xmax=7.25,
 ymin=0,
 ymax=0.35
 ]
 \addplot[color=gray] table {pictures/asin-densityXYX500_1.dat};
 \addplot[color=gray] table {pictures/asin-densityXYX500_2.dat}; 
 \addplot[color=gray] table {pictures/asin-densityXYX500_3.dat}; 
 \addplot[color=gray] table {pictures/asin-densityXYX500_4.dat}; 
 \addplot[thick,color=blue] table {pictures/asin-densityXYX500_5.dat}; 
 \end{axis}
\end{tikzpicture}
  \caption{Plot of the density $\mu_{X+XYX}$ for arcsine law}
  \label{fig:X+XYX:asin:density}
\end{subfigure}

\begin{subfigure}{1\textwidth}
	\centering{}
	\includegraphics[width=0.7\textwidth]{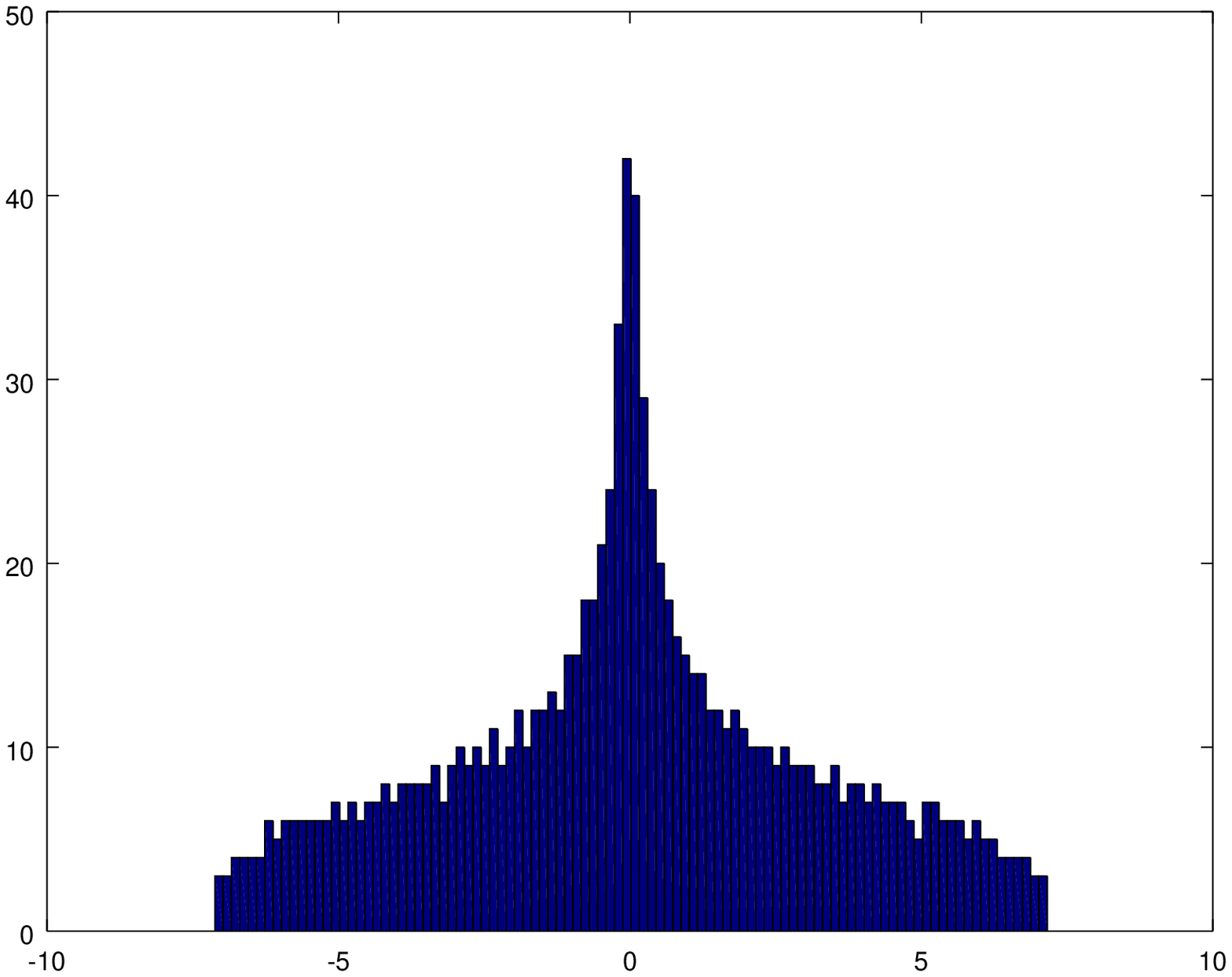}
	\caption{Histogram of the spectrum of a $1000\times1000$ random matrix model
		for $X+XYX$ for arcsine law}
	\label{fig:X+XYX:asin:randmat}
\end{subfigure}
\caption{The spectral density of $X+XYX$ for arcsine law}
	\label{fig:X+XYX:asin}
\end{figure}

\bibliographystyle{abbrv}

\end{document}